\renewcommand\tableofcontents{%
    \@starttoc{toc}%
  }
\begin{document}

\newcommand{\N}{\mathbb{N}}
\newcommand{\Z}{\mathbb{Z}}
\newcommand{\Q}{\mathbb{Q}}
\newcommand{\R}{\mathbb{R}}
\newcommand{\C}{\mathbb{C}}

\newcommand{\oa}{\overline{a}}
 
\newcommand{\p}{\mathcal{P}} 
\newcommand{\A}{\ensuremath{\mathfrak{A}}\xspace}
\newcommand{\hA}{\ensuremath{\hat{\mathfrak{A}}}\xspace}
\newcommand{\B}{\ensuremath{\mathfrak{B}}\xspace}
\newcommand{\hB}{\ensuremath{\hat{\mathfrak{B}}}\xspace}
\newcommand{\cl}{\ensuremath{\mathcal{C}}\xspace}
\newcommand{\fo}{\ensuremath{\textsc{FO}}\xspace}
\newcommand{\np}{\ensuremath{\textsc{NP}}\xspace}
\newcommand{\ml}{\ensuremath{\textsc{ML}}\xspace}
\newcommand{\pdl}{\ensuremath{\textsc{PDL}}\xspace}
\newcommand{\ck}{\ensuremath{\textsc{CK}}\xspace}
\newcommand{\sfi}{\ensuremath{\textsc{S5}}\xspace}
\newcommand{\cks}{\ensuremath{\textsc{CK-STR}}\xspace}
\newcommand{\rc}{\ensuremath{\textsc{RC}}\xspace}

\newcommand{\nc}{\newcommand}
\nc{\look}{\marginpar{$\bullet$}}
\nc{\triv}{\marginpar{$\circ$}}

\nc{\Section}{\section}
\nc{\SubSection}{\subsection}

\nc{\FO}{\ensuremath{\mathsf{FO}}\xspace}
\nc{\biFO}{\ensuremath{\mathsf{FO/\!\!\sim}}\xspace}
\nc{\MSO}{\ensuremath{\mathsf{MSO}}\xspace}
\nc{\MPL}{\ensuremath{\mathsf{MPL}}\xspace}
\nc{\Lmu}{\ensuremath{\mathsf{L}_\mu}\xspace}
\nc{\CTL}{\ensuremath{\mathsf{CTL}^*}\xspace}
\nc{\LTL}{\ensuremath{\mathsf{LTL}}\xspace}
\nc{\PDL}{\ensuremath{\mathsf{PDL}}\xspace}
\nc{\WCL}{\ensuremath{\mathsf{WCL}}\xspace}
\nc{\ML}{\ensuremath{\mathsf{ML}}\xspace}
\nc{\gML}{\ensuremath{\mathsf{ML}^\forall}\xspace}
\nc{\CK}{\ensuremath{\mathsf{CK}}\xspace}
\nc{\MLCK}{\ensuremath{\mathsf{ML}[\mathsf{CK}]}\xspace}
\nc{\MLPA}{\ensuremath{\mathsf{ML}[\mathsf{PA}]}\xspace}
\nc{\MLCP}{\ensuremath{\mathsf{ML}[\mathsf{CK,PA}]}\xspace}
\nc{\RC}{\ensuremath{\mathsf{RC}}\xspace}
\nc{\Sfive}{\ensuremath{\mathsf{S5}}\xspace}
\nc{\RCs}{\ensuremath{\mathsf{RC^\sim}}\xspace}
\nc{\RCPA}{\ensuremath{\mathsf{RC}[\mathsf{PA}]}\xspace}
\nc{\FOCK}{\ensuremath{\mathsf{FO}[\mathsf{CK}]}\xspace}
\nc{\FORC}{\ensuremath{\mathsf{FO}[\mathsf{RC}]}\xspace}
\nc{\FORCs}{\ensuremath{\mathsf{FO}[\mathsf{RC^\sim}]}\xspace}
\nc{\EF}{Ehrenfeucht-Fra\"iss\'e\xspace}

\newcommand{\Mf}{\ensuremath{\mathfrak{M}}\xspace}
\newcommand{\Nf}{\ensuremath{\mathfrak{N}}\xspace}
\newcommand{\ca}{Cayley\xspace}
\newcommand{\sh}{\mathrm{short}}
\newcommand{\ag}{\mathrm{agt}}
\newcommand{\gen}{\mathrm{gen}}

\nc{\ssc}{\scriptscriptstyle}
\newcommand{\qmi}{Q^{\ssc d\Mf}_i}
\newcommand{\qni}{Q^{\ssc d\Nf}_i}
\newcommand{\dmi}{\delta^{\ssc d\Mf}_i}
\newcommand{\dni}{\delta^{\ssc d\Nf}_i}
\newcommand{\dhmi}{\hat{\delta}^{\ssc \Mf}_i}
\newcommand{\dhni}{\hat{\delta}^{\ssc \Nf}_i}
\newcommand{\qm}{Q^{\ssc d\Mf}}
\newcommand{\qn}{Q^{\ssc d\Nf}}
\newcommand{\Dm}{D^{\ssc d\Mf}}
\newcommand{\Dn}{D^{\ssc d\Nf}}
\newcommand{\qmo}{Q^{\ssc d\Mf}_0}
\newcommand{\qno}{Q^{\ssc d\Nf}_0} 
\newcommand{\Tm}{\mathcal{T}^{\ssc d\Mf}}
\newcommand{\Tn}{\mathcal{T}^{\ssc d\Nf}}
\newcommand{\Tmo}{\mathcal{T}^{\ssc d\Mf}_0}
\newcommand{\Tno}{\mathcal{T}^{\ssc d\Nf}_0}
\newcommand{\Tmi}{\mathcal{T}^{\ssc d\Mf}_i}
\newcommand{\Tni}{\mathcal{T}^{\ssc d\Nf}_i}
\newcommand{\dm}{\delta^{\ssc d\Mf}}
\newcommand{\dn}{\delta^{\ssc d\Nf}}
\newcommand{\dhm}{\hat{\delta}^{\ssc \Mf}}
\newcommand{\dhn}{\hat{\delta}^{\ssc \Nf}}
\newcommand{\qmim}{Q^{\ssc d\Mf}_\im}
\newcommand{\qnim}{Q^{\ssc d\Nf}_\im}
\newcommand{\dmim}{\delta^{\ssc d\Mf}_\im}
\newcommand{\dnim}{\delta^{\ssc d\Nf}_\im}
\newcommand{\dhmim}{\hat{\delta}^{\ssc \Mf}_\im}
\newcommand{\dhnim}{\hat{\delta}^{\ssc \Nf}_\im}
\newcommand{\Tmim}{\mathcal{T}^{\ssc d\Mf}_\im}
\newcommand{\Tnim}{\mathcal{T}^{\ssc d\Nf}_\im}
\newcommand{\Qalph}{C}

\nc{\str}[1]{{\mathfrak{#1}}}
\nc{\restr}{\!\restriction\!}
\nc{\G}{\mathbb{G}}
\nc{\F}{\mathbb{F}}
\nc{\HH}{\mathbb{H}}
\nc{\VV}{\mathbb{V}}
\nc{\abar}{\mathbf{a}}
\nc{\bbar}{\mathbf{b}}
\nc{\cbar}{\mathbf{c}}
\nc{\xbar}{\mathbf{x}}
\nc{\ybar}{\mathbf{y}}
\nc{\zbar}{\mathbf{z}}
\nc{\ubar}{\mathbf{u}}
\nc{\sbar}{\mathbf{s}}
\nc{\tbar}{\mathbf{t}}
\nc{\vbar}{\mathbf{v}}
\nc{\wbar}{\mathbf{w}}

\nc{\Xbar}{\mathbf{X}}
\nc{\Ybar}{\mathbf{Y}}
\nc{\Zbar}{\mathbf{Z}}
\nc{\Pbar}{\mathbf{P}}
\nc{\nubar}{\mbox{\boldmath $\nu$}}

\nc{\barr}{\begin{array}}
\nc{\earr}{\end{array}}
\nc{\btab}{\begin{tabular}}
\nc{\etab}{\end{tabular}}


\nc{\nothing}{\rule{0em}{1ex}}
\nc{\highnothing}{\rule{0em}{3ex}}
\nc{\hnt}{\highnothing}
\nc{\nt}{\nothing}
\nc{\nnt}{\rule{.1pt}{0pt}}

\nc{\prf}{\begin{proof}}
\nc{\eprf}{\end{proof}}

\renewcommand{\phi}{\varphi}
\renewcommand{\geq}{\geqslant}
\renewcommand{\leq}{\leqslant}
\renewcommand{\preceq}{\preccurlyeq}
\renewcommand{\succeq}{\succcurlyeq}
\newcommand{\strictsubset}{\varsubsetneq}
\renewcommand{\subset}{\subseteq}
\newcommand{\strictsupset}{\varsupsetneq}
\renewcommand{\supset}{\supseteq}


\newenvironment{romanenumerate}%
{\begin{list}{(\roman{enumi})}{\usecounter{enumi}
\setlength{\labelwidth}{2cm}
\setlength{\itemindent}{0pt}
\setlength{\itemsep}{0.5\itemsep}
\setlength{\topsep}{\itemsep}
\setlength{\parsep}{0pt}
}}{\end{list}}
\nc{\bre}{\begin{romanenumerate}}
\nc{\ere}{\end{romanenumerate}}
\newenvironment{alphaenumerate}%
{\begin{list}{(\alph{enumii})}{\usecounter{enumii}
\setlength{\labelwidth}{2cm}
\setlength{\itemindent}{0pt}
\setlength{\itemsep}{0.5\itemsep}
\setlength{\topsep}{\itemsep}
\setlength{\parsep}{0pt}
}}{\end{list}}
\nc{\bae}{\begin{alphaenumerate}}
\nc{\eae}{\end{alphaenumerate}}
\newenvironment{numenumerate}%
{\begin{list}{(\arabic{enumiii})}{\usecounter{enumiii}
\setlength{\labelwidth}{2cm}
\setlength{\itemindent}{0pt}
\setlength{\itemsep}{0.5\itemsep}
\setlength{\topsep}{\itemsep}
\setlength{\parsep}{0pt}
}}{\end{list}}
\nc{\bne}{\begin{numenumerate}}
\nc{\ene}{\end{numenumerate}}

\nc{\ins}[1]{\bigskip\noindent
\framebox{\begin{minipage}{.95\textwidth} \sloppy \noindent \em
#1 \end{minipage}}\bigskip}

\nc{\ellp}{{\ell+1}}
\nc{\ellm}{{\ell-1}}
\nc{\kp}{{k+1}}
\nc{\km}{{k-1}}
\nc{\jp}{{j+1}}
\nc{\jm}{{j-1}}
\nc{\ip}{{i+1}}
\nc{\im}{{i-1}}
\nc{\brck}[1]{[\![ #1 ]\!]}
\nc{\PI}{\ensuremath{\mbox{\textbf{I}}}\xspace}
\nc{\PII}{\ensuremath{\mbox{\textbf{II}}}\xspace}
\nc{\image}{\mathrm{image}}

\newtheorem{theorem}{Theorem}[section]
\newtheorem*{theorem*}{Theorem}
\newtheorem{proposition}[theorem]{Proposition}
\newtheorem*{proviso}{Proviso}
\newtheorem{corollary}[theorem]{Corollary}
\newtheorem{lemma}[theorem]{Lemma}
\newtheorem{observation}[theorem]{Observation}

\theoremstyle{definition}
\newtheorem{definition}[theorem]{Definition}
\newtheorem*{remark}{Remark}

\parskip0pt

\title{Cayley Structures and Common Knowledge}

\author{Felix Canavoi and Martin Otto~\thanks{Research of both authors
    was partially supported by 
DFG grant OT~147/6-1: \emph{Constructions and  Analysis in Hypergraphs
  of Controlled Acyclicity}}}

\date{revised, December~2021}
\maketitle

\begin{abstract}
\noindent
We investigate multi-agent epistemic modal logic with common knowledge 
modalities for groups of agents and obtain van~Benthem~style
model-theoretic characterisations, in terms of bisimulation invariance
of classical first-order logic over 
the non-elementary classes of (finite or arbitrary)  common
knowledge Kripke frames. The technical challenges 
posed by the reachability and transitive closure  
features of the derived accessibility relations are dealt with through
passage to (finite) bisimilar coverings of epistemic frames 
by Cayley graphs of permutation groups
whose generators are associated with the agents. Epistemic
frame structure is here induced by an algebraic coset structure.
Cayley structures with specific acyclicity 
properties support a locality analysis at different levels of granularity
as induced by distance measures w.r.t.\ various coalitions of agents.
 \end{abstract}

\thispagestyle{empty}

\pagebreak

\thispagestyle{empty}

\bigskip

\tableofcontents

\pagebreak

\section{Introduction}
\label{introsec}

Modal logics have diverse applications that range from specification 
of process behaviours in computer science to the 
reasoning about knowledge and the interaction of agents in all 
kinds of distributed settings. Across this broad conception of modal logics
bisimulation invariance stands out as the crucial semantic feature
uniting an extremely diverse family of logics.
Bisimulation equivalence is based on an
intuitive back\&forth probing of transitions between  
possible instantiations of data, possibly
subject to observability by individual agents. As a core notion of
procedural, behavioural or cognitive equivalence it underpins 
the very modelling of relevant phenomena in the state- and
transition-based format of transition systems or Kripke structures. 
In this sense, bisimulation invariance is an essential `sanity'
requirement for any logical system that is meant to deal with relevant phenomena
rather than artefacts of the encoding. Not surprisingly, modal logics
in various formats share this preservation property. Moreover, modal logics
can often be characterised in relation to classical logics of reference 
as precisely capturing the bisimulation invariant properties of
relevant structures -- which turns the required preservation property 
into a criterion of expressive completeness.
This results in a model-theoretic characterisation that casts 
a natural level of expressiveness in a new perspective.

For classical basic modal logic, this characterisation is the
content of van~Benthem's classical theorem, which identifies basic modal logic
$\ML$ as the bisimulation invariant fragment of first-order logic
$\FO$ over the (elementary) class of all Kripke structures. In
suggestive shorthand: $\ML \equiv \FO/{\sim}$, where $\FO/{\sim}$
stands for the set of those $\FO$-formulae whose semantics is
invariant under bisimulation equivalence $\sim$; a fragment that is 
syntactically undecidable, but equi-expressive with $\ML \subset
\FO$ (identified with its standard translation into $\FO$).

\begin{theorem}[van Benthem~\cite{Benthem83}]
\label{vanBenthemthm}
$\ML \equiv \FO/{\sim}$.
\end{theorem}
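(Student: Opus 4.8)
The plan is to establish the two inclusions separately. The easy inclusion $\ML \subseteq \FO/{\sim}$ amounts to checking that modal logic is bisimulation invariant. First I would fix the \emph{standard translation} $\psi \mapsto \psi^*(x)$ sending each modal formula to an $\FO$-formula in one free variable, with $p \mapsto Px$, commuting with the Booleans, and $\Diamond\psi \mapsto \exists y\,(Rxy \wedge \psi^*(y))$. A routine induction on $\psi$, using precisely the back\&forth clauses in the definition of $\sim$, then shows that each $\psi^*$ is invariant under bisimulation; this places $\ML$ inside $\FO/{\sim}$.

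The substance of the theorem is the converse inclusion $\FO/{\sim} \subseteq \ML$: every bisimulation-invariant $\FO$-formula $\phi(x)$ is logically equivalent to (the translation of) some modal formula. I would prove this in two stages. The first and decisive stage upgrades semantic bisimulation invariance to invariance under mere \emph{modal equivalence} $\equiv^{\ML}$: I claim that if $(\Mf,w) \equiv^{\ML} (\Nf,v)$ then $(\Mf,w) \models \phi \Leftrightarrow (\Nf,v) \models \phi$. To see this I would pass to $\omega$-saturated elementary extensions $(\Mf^*,w)$ and $(\Nf^*,v)$. Elementarity preserves the status of the $\FO$-formula $\phi$ and of all translated modal formulas, so modal equivalence is inherited by the extensions; and on $\omega$-saturated models the Hennessy--Milner property guarantees that modal equivalence \emph{coincides} with bisimulation, yielding $(\Mf^*,w) \sim (\Nf^*,v)$. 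Bisimulation invariance of $\phi$ then forces agreement on the extensions, and elementarity transports this back to $(\Mf,w)$ and $(\Nf,v)$.

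The second stage is a compactness argument extracting a single modal formula. Let $\mathrm{MC}(\phi)$ denote the set of modal consequences of $\phi$, i.e.\ those $\psi \in \ML$ with $\phi \models \psi^*$. I would first show $\mathrm{MC}(\phi) \models \phi$: given any $(\Mf,w) \models \mathrm{MC}(\phi)$, the set $\{\phi\} \cup \{\psi^* : (\Mf,w)\models\psi^*\}$ is satisfiable by compactness — otherwise a finite inconsistency would exhibit a modal consequence of $\phi$ in direct conflict with the modal theory of $(\Mf,w)$ — so it has a model $(\Nf,v) \models \phi$ with $(\Nf,v) \equiv^{\ML} (\Mf,w)$, and stage one returns $(\Mf,w)\models\phi$. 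A second appeal to compactness, now to $\mathrm{MC}(\phi) \models \phi$, yields finitely many $\chi_1,\dots,\chi_m \in \mathrm{MC}(\phi)$ with $\bigwedge_i \chi_i \models \phi$; since conversely $\phi \models \chi_i$ for each $i$, the modal formula $\bigwedge_i \chi_i$ is equivalent to $\phi$, completing the argument.

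I expect the crux to be the first stage of the converse: the leap from the purely relational, potentially non-finitary bisimulation condition to a statement about finitary modal equivalence. This is exactly where $\FO$-compactness, packaged as $\omega$-saturation, is indispensable, since bisimulation and modal equivalence genuinely differ on arbitrary models, and only their coincidence on saturated models bridges the gap. It is also precisely this reliance on saturation and compactness that makes the analogous results over \emph{non-elementary} frame classes — the setting of this paper — so much more delicate.
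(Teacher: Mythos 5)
Your proof is correct: it is the classical compactness argument for van Benthem's theorem, and both stages are carried out soundly (the detour through $\omega$-saturated elementary extensions plus the Hennessy--Milner property to upgrade modal equivalence to bisimilarity, followed by the extraction of a finite conjunction from the set of modal consequences of $\varphi$). The paper, however, cites this theorem without proof and its whole methodology rests on a genuinely different argument, the \emph{upgrading} technique of~\cite{OttoNote,GorankoOtto} sketched in Section~\ref{upgradesec}: one shows directly that every $\sim$-invariant $\varphi \in \FO$ of quantifier rank $q$ is already $\sim^\ell$-invariant for some finite $\ell = \ell(q)$, by passing to explicitly constructed bisimilar companions $\Mf^\ast, \Nf^\ast$ in which $\sim^\ell$ can be upgraded to $\equiv_q$ via locality and \EF-game analysis, and then invoking Theorem~\ref{EFthm} to get a modal formula of nesting depth $\ell$. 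The trade-off is instructive. Your route is shorter and more elegant, but it gives no bound on the modal nesting depth of the resulting formula and is wedded to compactness and saturation, which are exactly the tools that fail in restriction to finite structures and over the non-elementary class of CK-structures. The upgrading route is constructive, yields an explicit bound $\ell(q)$, proves the van~Benthem and Rosen versions uniformly, and is the only one of the two that generalises to the setting of this paper; your own closing remark correctly identifies this as the reason the classical argument cannot be transplanted here.
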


Of the many extensions and variations on this theme that have been found, 
let us just mention two explicitly. 

Firstly, by a result of Rosen~\cite{Rosen},
van~Benthem's characterisation theorem
$\ML \equiv \FO/{\sim}$ is also good as a theorem of finite model
theory, where both, bisimulation-invariance and expressibility in modal
logic are interpreted in restriction to the non-elementary class of
all \emph{finite} Kripke structures; this drastically changes the meaning and
also requires a completely different proof technique. A transparent and constructive 
proof of expressive completeness that works in both the classical and
the finite model theory settings is given in~\cite{OttoNote} 
and also in~\cite{GorankoOtto}; 
like many of the more challenging extensions and variations
in~\cite{OttoAPAL04,DawarOttoAPAL09,Otto12JACM}, 
it relies on a model-theoretic upgrading argument that links finite
approximation levels
$\sim^\ell$ of full bisimulation equivalence $\sim$ 
to finite levels $\equiv_q$ of first-order
equivalence. A combination of bisimulation respecting model 
transformations and an Ehrenfeucht--Fra\"\i ss\'e analysis
establishes that every $\sim$-invariant first-order property
must in fact be invariant under some finite level $\sim^\ell$ of bisimulation
equivalence.
This may be seen as a crucial compactness phenomenon for 
$\sim$-invariant $\FO$, despite the unavailability of compactness 
for $\FO$ in some cases of interest. 

Secondly, by a famous result of Janin and Walukiewicz,
a similar characterisation is classically available for the modal
$\mu$-calculus $\Lmu$ in relation to
monadic second-order logic $\MSO$.

\begin{theorem}[Janin--Walukiewicz~\cite{JanWal}]
\label{JanWalthm}
$\Lmu \equiv \MSO/{\sim}$. 
\end{theorem}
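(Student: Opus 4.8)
The plan is to establish the two inclusions $\Lmu \subseteq \MSO/{\sim}$ and $\MSO/{\sim} \subseteq \Lmu$ separately, the second being the substantial \emph{expressive completeness} direction. For the first, I would use the standard translation of $\Lmu$ into $\MSO$, reading a least fixpoint $\mu X.\,\phi$ as the intersection over all sets closed under the monotone operator defined by $\phi$, and dually for $\nu$; each such clause is directly expressible by a monadic second-order quantifier. Bisimulation invariance of $\Lmu$ then follows by a routine induction on formulas, since every boolean connective, both modalities, and both fixpoint operators respect the bisimulation-respecting semantics. This yields $\Lmu \subseteq \MSO/{\sim}$.

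For the converse the finite-level upgrading route behind van~Benthem's theorem is unavailable, as $\Lmu$ captures unbounded recursion and is invariant under no finite bisimulation level $\sim^\ell$; I would therefore take the automata-theoretic route. First, bisimulation invariance reduces the problem to tree models: every pointed structure is bisimilar to its tree unraveling, so a bisimulation-invariant $\MSO$-formula is determined by its restriction to trees, and it suffices to match it there by a $\Lmu$-formula. Over trees, the Rabin tree theorem translates the $\MSO$-formula into an equivalent nondeterministic parity tree automaton $\mathcal{A}$. On the $\Lmu$ side I would invoke the characterization by \emph{modal} (or $\mu$-)automata, whose transition conditions are positive boolean combinations of $\Box$ and $\Diamond$ applied to states, so that they probe successors only in the existential/universal fashion available to modal logic.

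The crux is to convert the bisimulation-invariant automaton $\mathcal{A}$ into an equivalent modal automaton. Here I would exploit invariance to pass to \emph{saturated} tree models, in which every subtree-type realized below a node occurs with uniform infinite multiplicity; invariance guarantees that the language of $\mathcal{A}$ is determined by its behavior on such trees. Over saturated trees the automaton's ability to count or order successors collapses, since any pattern of successor-states may be freely duplicated and permuted without leaving the bisimulation class. This permits replacing the tuple-based transition relation of $\mathcal{A}$ by a $\Box/\Diamond$-condition that records only which sets of states are sent to successors, producing a modal automaton -- and hence a $\Lmu$-formula -- equivalent to $\mathcal{A}$ on trees, and therefore to the original $\MSO$-formula on all structures.

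The main obstacle is exactly this symmetrization step: showing that bisimulation invariance forces $\mathcal{A}$ into a counting-free, order-insensitive normal form corresponding to modal automata. The delicate part is to carry the parity acceptance condition faithfully through the passage to saturated models and through the re-expression of the transition relation, so that the resulting modal automaton accepts precisely the same trees; controlling the interaction between the priority/fixpoint structure and the bisimulation-driven symmetrization is where the real difficulty lies.
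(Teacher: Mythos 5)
The paper does not prove this statement: Theorem~\ref{JanWalthm} is imported as a known result of Janin and Walukiewicz, with only the remark that ``the arguments are essentially automata-theoretic,'' so there is no in-paper proof to compare against. Your sketch is a faithful outline of precisely that automata-theoretic argument: the easy inclusion $\Lmu \subseteq \MSO/{\sim}$ via the standard translation and induction is unproblematic, and for expressive completeness the route through tree unravelings, the translation of $\MSO$ over trees into nondeterministic parity tree automata, and the passage to saturated ($\omega$-expanded) trees on which the automaton can be symmetrised into a modal ($\mu$-)automaton is exactly the Janin--Walukiewicz strategy. You have, however, only named rather than executed the one step that carries all the weight: showing that on $\omega$-expanded trees a bisimulation-closed automaton language is recognised by an automaton whose transitions can be rewritten as positive $\Box/\Diamond$-conditions (equivalently, first-order conditions without equality on the successor set) while preserving the parity acceptance condition. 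As a plan it is the right one; as a proof it is incomplete precisely at its announced crux, and one should also be careful that the appeal to ``Rabin's tree theorem'' must be adapted to trees of unbounded (indeed infinite) branching degree as produced by unravelling arbitrary Kripke structures.
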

 
In this case, the arguments are essentially automata-theoretic, and 
the status in finite model theory remains
open -- and a rather prominent open problem indeed.

\medskip
\emph{Epistemic modal logics} deal with information in a multi-agent setting,
typically modelled by so-called S5 frames, in which accessibility
relations for the individual agents are equivalence relations and
reflect indistinguishability of possible worlds according to that
agent's observations. A characterisation
theorem for basic modal logic $\ML$ in this epistemic setting was 
obtained in~\cite{DawarOttoAPAL09}, both classically
and in the sense of finite model theory. Like the van~Benthem--Rosen 
characterisation, this deals with plain first-order logic (over the
elementary class of S5 frames, or over its non-elementary finite 
counterpart) and can uniformly use Gaifman locality in the analysis of first-order expressiveness.

In contrast, the present paper explores the situation for the epistemic modal
logic $\MLCK$ in a multi-agent setting with \emph{common knowledge} operators.
Common knowledge operators capture the
essence of knowledge that is shared among a group of agents, not just
as factual knowledge but also as knowledge of being shared 
to any iteration depth: everybody in the group also knows that everybody in
the groups knows that \ldots\ ad libitum. Cf.~\cite{FaginHalpernetal} 
for a thorough discussion. This notion of common
knowledge can be captured as a fixpoint construct, which is definable in 
$\MSO$ and in fact in $\Lmu$. It can also be captured in plain $\ML$ 
in terms of augmented structures, with derived accessibility relations 
obtained as the transitive closures of combinations of the individual
accessibility relations for the relevant agents: we here call these augmented
structures \emph{common knowledge structures} or \emph{CK-structures} for short. 
But be it fixpoints, $\MSO$, or the non-elementary and locality-averse
class of CK-frames, all these variations rule out any straightforward 
use of simple locality-based techniques.

\medskip
Here we use, as a template for highly intricate yet regular patterns of
multi-scale transitive relations, the \emph{coset structure} of Cayley
groups w.r.t.\ combinations of generators. 
We can show that 
\emph{Cayley structures}, obtained as expansions of 
relational encodings of Cayley groups by propositional assignments, 
are universal representatives up to bisimulation of 
S5 structures
--
both in the general and in the finite setting.
In this picture, generator combinations 
model coalitions of agents, cosets w.r.t.\
generated subgroups model islands of common knowledge or the
induced accessibility relations of CK-frames. For the following 
cf.\ Definitions~\ref{Cayleystrucdef} and~\ref{coverdef}. 

\begin{lemma}[main lemma]
\label{CayleyCKlem}
Every (finite) CK-structure admits (finite) bisimilar coverings by 
Cayley structures (of various degrees of acyclicity w.r.t.\ their 
epistemic or coset structure).
\end{lemma}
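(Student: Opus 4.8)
The plan is to reduce the claim to the construction of a suitably acyclic \ca covering of the underlying \Sfive reduct and to let the common-knowledge relations take care of themselves. A CK-structure is completely determined by its \Sfive reduct --- the family of indistinguishability relations $\sim_a$, one per agent --- since the derived common-knowledge accessibilities are nothing but the reflexive-transitive closures $(\bigcup_{a\in B}\sim_a)^*$ for the various coalitions $B$. The first step is therefore to observe that a surjective bounded morphism $\pi$ which is a covering for each individual $\sim_a$ is automatically a covering for all of these closures. Indeed, the forth condition lifts along paths, so $\pi$ sends a $B$-path to a $B$-path; and the back condition can be applied edge by edge along any $B$-path in the base, lifting it step by step into the cover. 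Unique lifting of the individual moves moreover propagates along paths, so the covering property transfers to the closures as well. Hence it suffices to cover the \Sfive reduct by a \ca structure whose generators are partitioned into blocks $E_a$, one per agent, so that the cosets of $\langle E_a\rangle$ realise the $a$-classes and those of $\langle\bigcup_{a\in B}E_a\rangle$ realise the $B$-islands of common knowledge.

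For the covering of the reduct itself I would read the \Sfive structure as a partition hypergraph: the worlds are the vertices and, for each agent $a$, the $\sim_a$-classes form an $a$-coloured partition into hyperedges. A \ca structure over a group $G$ generated by involutions grouped into the blocks $E_a$ is precisely a coset hypergraph of this shape, the $a$-hyperedges being the cosets $g\langle E_a\rangle$. After a harmless preliminary normalisation of the base to uniform local shape (arranged by bisimilar duplication, and preservable in the finite case), I would build such a $G$ together with a map $\pi$ onto the worlds under which each coset $g\langle E_a\rangle$ projects onto the $\sim_a$-class of $\pi(g)$, and a propositional assignment obtained as the pullback of the atomic propositions along $\pi$. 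Surjectivity together with the resulting back-and-forth conditions for every $\sim_a$ then makes $\pi$ a covering of the reduct --- each individual indistinguishability class being covered faithfully, only the global overlap pattern of the classes being unfolded --- and hence, by the first paragraph, a covering of the full CK-structure.

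The main obstacle is to secure the stated acyclicity of the coset structure, which is exactly what a naive coset construction does not provide. For a prescribed degree $N$ the group $G$ must be chosen so that its coset structure with respect to the subgroups $\langle E_a\rangle$ and $\langle\bigcup_{a\in B}E_a\rangle$ has no short cycles up to radius $N$: distinct cosets must overlap only in the tree-like, conflict-free pattern on which the subsequent locality analysis depends. This is the algebraic heart of the argument, and here I would invoke the construction of highly acyclic groups generated by involutions with prescribed block structure: for every $N$ it produces a (finite) group whose coset hypergraph is $N$-acyclic while still covering the given (finite) partition hypergraph. In the finite setting this succeeds precisely because only acyclicity up to the finite radius $N$ is demanded, and finite groups realising $N$-acyclic coset structure can be produced; letting $N$ range over all degrees then yields the promised family of coverings of ascending acyclicity.
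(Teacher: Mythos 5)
Your overall architecture is the paper's: you reduce to the plain \Sfive reduct (using that the passage $\str{M}\mapsto\str{M}^\CK$ is safe for bisimulation, so a covering of the reducts is a covering of the CK-expansions), and you obtain the acyclicity degrees by invoking the highly acyclic Cayley groups of \cite{Otto12JACM}, exactly as Lemma~\ref{bettercoveringlem} does. But the central step is missing: you never actually construct the group $\G$ and the covering map $\pi$ for an arbitrary (finite) \Sfive structure. Your second paragraph states what such a $\G$ and $\pi$ must satisfy (cosets of $\langle E_a\rangle$ projecting onto $a$-classes) and announces that you ``would build'' them; that existence claim \emph{is} the content of Lemma~\ref{CayleyCKlemproper}, so asserting it is circular. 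The appeal to the highly-acyclic-groups construction cannot fill this hole, because that result (Lemma~\ref{finiteGroupCovering}) covers a given \emph{Cayley group} by an $n$-acyclic Cayley group via a group homomorphism; it does not produce a coset hypergraph covering an arbitrary partition hypergraph. You still need a first Cayley covering to feed into it. Your proposed ``normalisation to uniform local shape by bisimilar duplication'' is not obviously sufficient either: even with all $a$-classes of equal size, the overlap pattern between classes of different agents is arbitrary, and no generic group is exhibited that realises it.

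The paper's construction at this point is short but essential: take one involutive generator per individual accessibility edge $e=(\{w,w'\},a)$ of $\str{M}$, realised as the permutation $\pi_e$ of the vertex set of $\str{M}\oplus 2^E$ that transposes $w$ and $w'$ and acts freely on the auxiliary hypercube $2^E$ (the hypercube component forces the $\pi_e$ to be pairwise distinct non-trivial involutions, independent of coincidences on $W$). Let $\G$ be the permutation group these generate, acting on $W$ from the right; since $\str{M}$ is connected the action is transitive, and restricting the map $(w,g)\mapsto wg$ to a single orbit $\{(w_0,g):g\in\G\}$ identifies that sheet with the Cayley frame of $\G$ and yields the covering $\pi\colon g\mapsto w_0g$, with propositions pulled back along $\pi$. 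Finiteness of $W$ gives finiteness of $\G$. Without this (or an equivalent explicit construction), your argument is a correct plan rather than a proof. Your first paragraph is fine, modulo the inessential remark about unique lifting, which the covering notion of Definition~\ref{coverdef} does not require.
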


Cayley groups with suitable acyclicity properties
for their coset structure are available from~\cite{Otto12JACM}; they are 
used here in a novel analysis of first-order expressiveness
and Ehrenfeucht--Fra\"\i ss\'e games. This allows us to deal 
with the challenge of locality issues at different scales or levels of
granularity as induced by reachability and 
transitivity phenomena for different groups of agents in
CK-structures. 
Our main theorem is the following.

\begin{theorem}
\label{mainthm}
$\MLCK \equiv \FO/{\sim}$ over CK-structures, both 
classically and in the sense of finite model theory.
\end{theorem}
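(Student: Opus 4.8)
The plan is to prove the expressive completeness direction --- every \(\sim\)-invariant \FO-formula over CK-structures is equivalent to an \MLCK-formula --- since the converse inclusion (\MLCK-formulae translate into \(\sim\)-invariant \FO) is routine via standard translation together with the fact that common knowledge modalities are definable in \FO once the CK-structure's derived transitive-closure relations are present as part of the signature. The core strategy follows the model-theoretic upgrading template sketched in the introduction for \(\ML \equiv \FO/{\sim}\): I would show that any \(\sim\)-invariant \FO-property is already invariant under some \emph{finite} approximation \(\sim^\ell\) of bisimulation. Once that finite-depth invariance is established, a standard normal-form argument expresses the property as a finite Boolean combination of \MLCK-formulae of bounded nesting depth, since \(\sim^\ell\)-invariance over CK-structures is exactly what \MLCK formulae of the appropriate depth capture.

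The mechanism for the upgrading is where Lemma~\ref{CayleyCKlem} does the decisive work. Given a \(\sim\)-invariant \FO-formula \(\phi\) of quantifier rank \(q\), I want to find \(\ell\) such that \(\sim^\ell\) refines \(\equiv_q\) on a sufficiently rich class of models --- equivalently, such that two \(\ell\)-bisimilar CK-structures cannot be separated by \(\phi\). First I would replace the given CK-structures by finite bisimilar Cayley-structure coverings supplied by the main lemma, chosen with a degree of acyclicity (w.r.t.\ their coset structure) large enough relative to \(q\). Bisimilarity preserves the value of \(\phi\) by \(\sim\)-invariance, so it suffices to win the \FO \EF game of length \(q\) on the two coverings. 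The acyclic coset structure of Cayley structures is precisely the feature that tames the otherwise locality-hostile transitive-closure relations: on highly acyclic covers, the derived accessibility relations for each coalition become \emph{locally tree-like} at the relevant scale, so that neighbourhoods at different levels of granularity --- one scale per coalition of agents --- look alike and can be matched.

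The heart of the proof, and the step I expect to be the main obstacle, is the \EF analysis itself: designing a winning strategy for the duplicator in the \(q\)-round first-order game played on the two Cayley-structure coverings. Unlike the single-scale Gaifman-locality argument available in the plain \S5 setting of~\cite{DawarOttoAPAL09}, here one must simultaneously control locality at the \emph{different scales} induced by the different coalitions of agents, because the transitive-closure relation for a larger coalition can connect points that are far apart in the metric of a smaller coalition. The duplicator's strategy must therefore maintain a multi-scale back-and-forth correspondence: at each move it matches not just the local \S5-neighbourhood of the chosen point but a nested hierarchy of coset-neighbourhoods indexed by the coalitions, using the acyclicity of the Cayley structure to guarantee that short \FO-distance configurations are faithfully reflected and that no spurious short reachability paths are created. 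Making these scales interact correctly --- quantifying how much acyclicity is needed as a function of \(q\) and the number of agents, and verifying that the matched neighbourhoods are genuinely isomorphic as structures carrying all the derived relations --- is the technical crux. The finiteness of the Cayley coverings is what keeps the whole argument valid in the sense of finite model theory, so that no separate compactness-based argument is needed and the classical and finite cases are handled uniformly.
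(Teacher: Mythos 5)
Your proposal follows the paper's own proof almost step for step: the standard translation for the easy inclusion, and for expressive completeness the upgrading of $\sim^\ell$ to $\equiv_q$ over (finite) bisimilar Cayley coverings of sufficiently high coset acyclicity (Lemmas~\ref{CayleyCKlemproper} and~\ref{bettercoveringlem}), concluded via Theorem~\ref{EFthm}; you also correctly identify the multi-scale \EF analysis as the technical crux, which the paper carries out via dual hypergraphs, convex closures and the freeness theorem. The one ingredient you omit is that the coverings must be sufficiently \emph{rich} as well as acyclic (Lemma~\ref{mainlemmarichness}): boosting local multiplicities is needed to rule out $\FO_q$-definable cardinality distinctions inside $\alpha$-classes and is an essential hypothesis of the freeness theorem that underlies the duplicator's strategy.
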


An equivalent alternative formulation would 
characterise $\MLCK$ as 
the $\sim$-invariant fragment of 
$\FOCK$, the extension of $\FO$ that gives it access to 
the derived accessibility relations for common knowledge -- now over
all (finite) S5 structures.
A preliminary discussion of the technical challenges for the expressive
completeness assertion in this theorem, also in comparison to 
those in related approaches to e.g.~Theorem~\ref{vanBenthemthm}, 
can be found in Section~\ref{upgradesec}.

\section{Basics} 
\label{basicssec}

\subsection{S5 and CK Kripke structures and modal logic}
For this paper we fix a finite non-empty set $\Gamma$ of agents; individual
agents are referred to by labels $a \in \Gamma$. In corresponding 
\emph{S5 Kripke frames} $(W,(R_a)_{a \in \Gamma})$ the set~$W$ of 
possible worlds is split, for each $a \in \Gamma$, into equivalence
classes $[w]_a$ 
w.r.t.\ the equivalence relations $R_a$ that form 
the accessibility relations for the individual agents in this 
multi-modal Kripke frame. 
The epistemic reading is that agent $a$ cannot directly distinguish 
worlds from the same class $[w]_a$; 
to simplify terminology we also speak of $a$-edges and $a$-equivalence
classes. 
An \emph{S5 Kripke structure} is an expansion of an S5 Kripke frame 
by a propositional assignment for a given set of basic propositions 
$(P_i)_{i \in I}$. Individual formulae of the logics considered 
will only mention finitely many basic propositions, and we may also think of
the index set $I$ for the basic propositions as a fixed finite set. 
The propositional assignment is encoded, in relational terms, by
unary predicates $P_i$ for $i \in I$, and a
typical S5 Kripke structure is specified as 
\[
\str{M} = (W,(R_a)_{a \in
  \Gamma}, (P_i)_{i \in I}).%
\footnote{Where confusion is unlikely, 
we do not explicitly label the interpretations of the 
$R_a$ and $P_i$ by $\str{M}$.}
\]

Basic modal logic $\ML$ for this setting has atomic formulae
$\bot,\top$ and $p_i$ for $i \in I$, and is closed under
the usual boolean connectives, $\wedge,\vee,\neg$, as well as under
the modal operators (modalities, modal quantifiers) $\Box_a$ and
$\Diamond_a$ for $a \in \Gamma$. The semantics for $\ML$ is the 
standard one, with an intuitive epistemic reading of 
$\Box_a$ as ``agent $a$ knows that \ldots'' and, dually,
$\Diamond_a$ as ``agent $a$ regards it as possible that \ldots'', 
inductively:
\begin{itemize}
\item
$\str{M},w \models p_i$ if $w \in P_i$; 
\\
$\str{M},w \models \top$ for all and 
$\str{M},w \models \bot$ for no $w \in W$;
\item
boolean connectives are treated as usual;
\item
$\str{M},w \models \Box_a \phi$ if $\str{M},w' \models \phi$ for 
all $w' \in [w]_a$;
\item
$\str{M},w \models \Diamond_a \phi$ if $\str{M},w' \models \phi$ for 
some $w' \in [w]_a$.
\end{itemize}

The extension of $\ML$ to \emph{common knowledge logic} $\MLCK$ 
introduces further modalities $\Box_\alpha$ and $\Diamond_\alpha$ for
every \emph{group of agents} $\alpha \subset \Gamma$. The intuitive
epistemic reading of $\Box_\alpha$ is that ``it is common knowledge 
among agents in $\alpha$ that \ldots'', and $\Diamond_\alpha$ is treated as the
dual of $\Box_\alpha$. The semantics of $\Box_\alpha$ in an S5 Kripke structure
$\str{M}$ as above is given by the condition that $\str{M},w \models
\Box_\alpha \phi$ if $\phi$ is true in every world $w'$ that is reachable
from $w$ on any path formed edges from the $R_a$ for $a \in \alpha$.
The relevant set of worlds $w'$ is the 
equivalence class $[w]_\alpha$ w.r.t.\ the derived
equivalence relation
\[
\textstyle
R_\alpha := \mathsf{TC}(\bigcup_{a \in \alpha} R_a), 
\]
where $\mathsf{TC}$ denotes (reflexive and symmetric) transitive closure. 
\begin{itemize}
\item
$\str{M},w \models \Box_{\alpha} \phi$ if 
$\str{M},w' \models \phi$ for 
all $w' \in [w]_{\alpha}$;
\item
$\str{M},w \models \Diamond_{\alpha} \phi$ if 
$\str{M},w' \models \phi$ for 
some $w' \in [w]_{\alpha}$.
\end{itemize}

Note that for singleton sets $\alpha = \{ a \}$, 
$\Box_\alpha$ coincides with $\Box_a$ just as 
$R_a$ coincides with $R_{\{a\}}$. The modal operators 
$\Box_\emptyset$ and $\Diamond_\emptyset$ are eliminable:
they both refer to just truth in $[w]_\emptyset = \{ w \}$.
We use $\tau := \mathcal{P} (\Gamma)$ for the labelling 
of the expanded list of modalities and the corresponding equivalence 
relations and classes, so $\alpha$ will range over $\tau$.

\begin{definition}
\label{CKstrucdef}
With any S5 Kripke frame (or structure) we associate the \emph{CK-frame} (or
structure) obtained as the expansion of the family $(R_a)_{a \in
  \Gamma}$ to the family $(R_\alpha)_{\alpha \in \tau}$ for $\tau =
\mathcal{P}(\Gamma)$, 
where 
$R_\alpha = \mathsf{TC}(\bigcup_{a \in \alpha} R_a)$.
\end{definition}

We use notation $\str{M}^\CK$ to indicate the passage 
from the S5 Kripke structure 
$\str{M} = (W,(R_a)_{a \in \Gamma}, (P_i)_{i \in I})$ 
to its associated CK-structure,
\[
\str{M}^\CK \!=\! (W,(R_a)_{\alpha \in \tau}, (P_i)_{i \in I}),
\]
which is again an S5 Kripke structure.
The resulting class of CK-structures is non-elementary.
Indeed, a simple compactness argument shows that the defining 
conditions for the $R_\alpha$ cannot be first-order expressible. 

\begin{definition}
\label{MLCKdef}
The syntax of \emph{epistemic modal logic with common knowledge}, $\MLCK$,
for the set of agents $\Gamma$ 
is the same as the syntax of basic modal logic $\ML$ with modalities 
$\Box_\alpha$ and $\Diamond_\alpha$ for $\alpha \in \tau = \mathcal{P}(\Gamma)$.
Its semantics, over S5 Kripke structures $\str{M}$ for the set of agents $\Gamma$,
is the usual one, evaluated over the associated CK-structures $\str{M}^\CK$.
\end{definition}

We next look at a seemingly very special class of CK-structures. 
In these, the equivalence relations $R_\alpha$ are induced by the coset
structure of an underlying group w.r.t.\ designated (sets of)
generators. We use the name \emph{Cayley structures} for these special
CK-structures whose epistemic structure is induced by the Cayley graph  
of a group, which relates its combinatorics to basic algebraic
concepts as explored by Cayley in~\cite{Cayley1,Cayley2}. 
As we shall see in Lemma~\ref{CayleyCKlemproper}, which is a cornerstone for the
approach taken in this paper, the class of these Cayley structures 
is rich enough to represent any CK-structure up to bisimulation.

\subsection{Common knowledge in Cayley structures}

A \emph{Cayley group} is a group $\G = (G,\cdot,1)$ with a specified 
set of generators $E \subset G$, which in our case will always be 
distinct, non-trivial involutions: $e \not= 1$ and $e^2 = 1$ for all $e \in E$. 
$\G$ is generated by $E$ in the sense that every $g \in G$ can be
represented  as a product of generators, i.e.\ as a word in $E^\ast$,
which w.l.o.g.\ is reduced in the sense of not having any factors $e^2$. 
With the Cayley group $\G = (G,\cdot,1)$ one associates its \emph{Cayley graph}.
Its vertex set is the set $G$ of group elements; its edge relations 
are $R_e := \{ (g,ge) \colon g \in G \}$, 
which in our case are symmetric and indeed complete matchings on $G$. 
That $\G$ is generated by $E$ means that the edge-coloured graph 
$(G,(R_e)_{e \in E})$ is connected; it is also homogeneous in the
sense that any two vertices $g$ and $h$ are related by a graph
automorphism induced by 
left multiplication with $hg^{-1}$ in the group.

We partition the generator set $E$ into non-empty 
subsets $E_a$ associated with the agents $a \in
\Gamma$, and consider subgroups 
$\G_a = \langle e \colon e \in E_a\rangle \subset \G$ generated by the
$e \in E_a$. 
This allows us to regard left cosets w.r.t.\  
$\G_a$ as $a$-equivalence classes over $G$, turning 
$G$ into the set of possible worlds of an S5 frame. 
Indeed, the 
associated equivalence relation
\[
\textstyle
R_a := \{ (g,gh) \colon h \in \G_a 
\} =
\mathsf{TC}\bigl(\bigcup \{ R_e \colon e \in E_a\}\bigr)
\]
is the (reflexive, symmetric) transitive closure of the edge relation
induced by corresponding generators in the Cayley graph. This pattern
naturally extends to sets of agents $\alpha \in \tau =
\mathcal{P}(\Gamma)$.
Writing $\G_\alpha \subset \G$ for the subgroup generated by $E_\alpha
:= \bigcup \{ E_a \colon a \in \alpha \}$, the equivalence relations 
\[
\textstyle
R_\alpha := \{ (g,gh) \colon h \in 
\G_\alpha
\} =
\mathsf{TC}\bigl(\bigcup \{ R_a \colon a \in \alpha\}\bigr)
\]
are the accessibility relations in the CK-expansion: their equivalence
classes \emph{are} the 
left cosets w.r.t.\ the subgroups $\G_\alpha$ 
generated by corresponding parts of the $\Gamma$-partitioned  $E$.

\begin{definition}
\label{Cayleystrucdef}
With any Cayley group $\G = (G,\cdot,1)$ with generator set $E$ of involutions that is
$\Gamma$-partitioned according to $E = \dot{\bigcup}_{a \in \Gamma} E_a$,  
we associate the \emph{Cayley CK-frame}  (Cayley frame, for short)
$\G^\CK$ over the set $G$ of possible worlds with
accessibility relations $R_\alpha$ for $\alpha \in \tau =
\mathcal{P}(\Gamma)$. 
A \emph{Cayley structure} consists of a Cayley frame together with 
a propositional assignment.
\end{definition}

Note that any Cayley structure is a CK-structure, so that 
for Cayley structures $\str{M}$, always $\str{M}^\CK = \str{M}$. 
In the following we simply speak of $\alpha$-edges, -classes, -cosets with
reference to the $R_\alpha$ or $\G_\alpha$ in any \ca structure.

\subsection{Bisimulation}

We present the core ideas surrounding the notion of bisimulation
equivalence in the language of model-theoretic back\&forth games of
the following format. Play is between two players, player~\PI\
and~\PII, and over two Kripke structures 
$\str{M} = (W,(R_a)_{a \in \Gamma}, (P_i)_{i \in I})$  
and 
$\str{N} = (V,(R_a)_{a \in \Gamma}, (P_i)_{i \in I})$. 
A position of the game
consists of a pair of worlds $(w,v) \in W \times V$, which denotes a
placement of a single pair of pebbles on $w$ in $\str{M}$ and on $v$
in $\str{N}$.

In a round played from position $(w,v)$, player \PI\ chooses one of
the structures, $\str{M}$ or $\str{N}$,  and one of the accessibility
relations, i.e.\ one of the labels $a \in \Gamma$, and moves the pebble
in the chosen structure along some edge of the chosen accessibility
relation; player \PII\ has to move the pebble along an edge of the
same accessibility relation in the opposite structure; the round results
in a successor position $(w',v')$.

\PII\ loses in any position $(w,v)$ that violates propositional
equivalence, i.e.\ whenever $\{ i \in I \colon w \in P_i \}
\not= \{ i \in I \colon v \in P_i \}$; in this case the game
terminates with a loss for $\PII$. 
The unbounded game continues 
indefinitely, and any infinite play is won by \PII. The $\ell$-round game
is played for $\ell$ rounds, it is won by \PII\ if she can play
through these $\ell$ rounds without violating propositional
equivalence.
 
\begin{definition}
\label{bisimdef}
$\str{M},w$ and $\str{N},v$ are \emph{bisimilar},
$\str{M},w \sim \str{N},v$, if \PII\ has a winning strategy in the
unbounded bisimulation game on  $\str{M}$ and $\str{N}$
starting from position $(w,v)$.
$\str{M},w$ and $\str{N},v$ are \emph{$\ell$-bisimilar},
$\str{M},w \sim^\ell \str{N},v$, if \PII\ has a winning strategy in the
$\ell$-round bisimulation game starting from position $(w,v)$.
\end{definition}

When a common background structure $\str{M}$ is clear from context we also
write just $w \sim w'$ for $\str{M},w \sim \str{M},w'$, and similarly for 
$\sim^\ell$.

It is instructive to compare the bisimulation game on 
$\str{M}/\str{N}$ with the game on  $\str{M}^\CK/\str{N}^\CK$.
On one hand,
\[
\str{M},w \sim \str{N},v 
\;\; \mbox{ iff } \;\;
\str{M}^\CK,w \sim \str{N}^\CK,v;
\]
the non-trivial implication from left to right uses the fact that every
move along an $R_\alpha$-edge can be simulated by a finite number of
moves along $R_a$-edges for $a \in \alpha$. This also means that, in 
the terminology of classical modal logic, passage from $\str{M}$ to $\str{M}^\CK$ is 
\emph{safe for bisimulation}. On the other hand, there is no such
correspondence at the level of the finite approximations $\sim^\ell$, since 
the finite number of rounds needed to simulate a single 
round played on an $R_\alpha$-edge cannot be uniformly bounded. 
This illustrates the infinitary character of passage from $\str{M}$ to
$\str{M}^\CK$, and encapsulates central aspects of our
concerns here: 
\begin{quotation}\noindent
the passage $\str{M} \longmapsto \str{M}^\CK$ 
is beyond first-order control
\\
and breaks standard notions of locality.
\end{quotation}
Correspondingly, modal or first-order expressibility over
$\str{M}^\CK$ transcends expressibility over $\str{M}$, and in particular
$\MLCK$ transcends $\ML$ while still being invariant under $\sim$.

\medskip
The link between bisimulation and definability in modal logics
is the following well-known 
modal analogue of the classical Ehrenfeucht--Fra\"\i ss\'e theorem,
cf.~\cite{MLtextbook,GorankoOtto}.
Here and in the following we denote as
\[
\str{M},w \equiv^\ML_\ell \str{N},v
\]
indistinguishability
by $\ML$-formulae of modal nesting depth (quantifier
rank) up to $\ell$, just as $\equiv^\FO_q$ or just $\equiv_q$ will
denote classical first-order equivalence (elementary equivalence) up to quantifier
rank $q$. Over finite relational vocabularies all of these
equivalences have finite index, which is crucial for the
following.\footnote{Finite index is crucial for the definability of
  the $\sim^\ell$-equivalence classes 
by so-called characteristic formulae $\chi_{\str{M},w}^\ell$
s.t.\
$\str{N},v \models \chi_{\str{M},w}^\ell$ iff $\str{M},w \sim^\ell \str{N},v$.}

\begin{theorem}
\label{EFthm}
For any finite modal vocabularies (here: 
finite sets of agents and basic propositions), 
Kripke structures $\str{M}$ and $\str{N}$ with distinguished worlds
$w$ and $v$, and $\ell \in \N$:
\[
\str{M},w \sim^\ell \str{N},v\;\; \mbox{ iff } \;\;
\str{M},w \equiv^\ML_\ell \str{N},v.
\]
\end{theorem}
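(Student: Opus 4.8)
The plan is to prove the two directions of the biconditional separately, and the natural strategy for both is an induction on the round count $\ell$ that tracks the correspondence between positions in the back\&forth game and modal formulae of bounded nesting depth. The key conceptual device is the \emph{characteristic formula} $\chi_{\str{M},w}^\ell$ mentioned in the footnote: for each structure $\str{M}$, world $w$, and depth $\ell$, one defines an $\ML$-formula of nesting depth $\ell$ that captures the $\sim^\ell$-type of $(\str{M},w)$ in the sense that $\str{N},v \models \chi_{\str{M},w}^\ell$ iff $\str{M},w \sim^\ell \str{N},v$. Crucially, this requires that $\equiv^\ML_\ell$ has finite index over the fixed finite vocabulary, so that infinite conjunctions and disjunctions collapse to finite ones; this is exactly the finiteness remark emphasised just before the theorem statement.

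First I would handle the direction $\str{M},w \sim^\ell \str{N},v \Rightarrow \str{M},w \equiv^\ML_\ell \str{N},v$, which asserts that bisimilarity preserves the truth of all modal formulae up to depth~$\ell$. This is the routine ``soundness'' half: I would argue by induction on the structure (equivalently the nesting depth) of an arbitrary $\ML$-formula $\phi$ with nesting depth $\leq \ell$, showing $\str{M},w \models \phi \Leftrightarrow \str{N},v \models \phi$. The atomic case uses that $\sim^\ell$ (for $\ell \geq 0$) forces propositional equivalence at the current position; the boolean cases are immediate; and the modal case $\Diamond_a \psi$ (with $\psi$ of depth $\leq \ell - 1$) uses that \PII{} can answer \PI's move along an $a$-edge, delivering a successor position that is $(\ell-1)$-bisimilar, to which the induction hypothesis applies.

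Next I would prove the converse $\str{M},w \equiv^\ML_\ell \str{N},v \Rightarrow \str{M},w \sim^\ell \str{N},v$, the ``completeness'' half, which is where the finite-index hypothesis does the real work. I would proceed by induction on $\ell$. For $\ell = 0$, modal equivalence reduces to agreement on the atomic propositions, which is precisely the $\sim^0$ condition. For the inductive step I must describe \PII's winning strategy in the $\ell$-round game: when \PI{} moves in, say, $\str{M}$ along an $a$-edge to some $w'$, I consider the characteristic formula $\chi_{\str{M},w'}^{\ell-1}$ of depth $\ell - 1$; then $\str{M},w \models \Diamond_a \chi_{\str{M},w'}^{\ell-1}$, and since this is a depth-$\ell$ formula, modal equivalence transfers it to $\str{N},v$, yielding an $a$-successor $v'$ of $v$ with $\str{N},v' \models \chi_{\str{M},w'}^{\ell-1}$, i.e.\ $\str{M},w' \equiv^\ML_{\ell-1} \str{N},v'$, whence by the inductive hypothesis $\str{M},w' \sim^{\ell-1} \str{N},v'$. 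This is \PII's reply, and the symmetric case (a move by \PI{} in $\str{N}$) is identical.

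The main obstacle, and the only genuinely non-trivial point, is establishing the existence and well-definedness of the characteristic formulae $\chi_{\str{M},w}^\ell$ as honest finite $\ML$-formulae of nesting depth $\ell$. This is exactly where finiteness of the vocabulary enters: the construction of $\chi_{\str{M},w}^\ell$ from the formulae $\chi_{\str{M},w'}^{\ell-1}$ ranging over all $a$-successors $w'$ naively involves conjunctions and disjunctions over the possibly infinite successor set, but because $\equiv^\ML_{\ell-1}$ has only finitely many classes over the finite set of agents and propositions, only finitely many distinct characteristic formulae of depth $\ell-1$ occur, so the conjunctions and disjunctions are effectively finite. Since $\Gamma$ is finite, one forms, for each agent $a$, a finite disjunction $\bigvee$ over the realised successor types and a box $\Box_a$ over their disjunction, and conjoins these with the atomic description of $w$; I would verify by induction that the resulting formula indeed defines the $\sim^\ell$-class. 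Once this lemma is in place, both directions above close cleanly.
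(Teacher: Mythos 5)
Your proposal follows the standard textbook argument for this modal \EF theorem, which is exactly what the paper intends: the paper does not prove Theorem~\ref{EFthm} itself but cites it from the literature (\cite{MLtextbook,GorankoOtto}), and your two-directional induction with characteristic formulae is that standard proof. The forward (``soundness'') direction by induction on formula structure and the backward (``completeness'') direction via the game strategy driven by $\Diamond_a\chi^{\ell-1}_{\str{M},w'}$ are both set up correctly, and you correctly identify the finite-index observation as the point where finiteness of the vocabulary is indispensable.

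One concrete repair is needed in your construction of the characteristic formulae. As described, $\chi^\ell_{\str{M},w}$ consists of the atomic type of $w$ together with, for each agent $a$, only the conjunct $\Box_a\bigvee_{w'}\chi^{\ell-1}_{\str{M},w'}$ over the realised successor types. This enforces only the ``back'' condition: a world $v$ in $\str{N}$ all of whose $a$-successors realise types realised in $[w]_a$ will satisfy it even if some type realised in $[w]_a$ is missing in $[v]_a$, so the formula does \emph{not} define the $\sim^\ell$-class, and your announced verification would fail. You must also include the dual ``forth'' conjuncts $\bigwedge_{w'\in[w]_a}\Diamond_a\chi^{\ell-1}_{\str{M},w'}$ (again a finite conjunction by finite index). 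Without them, the key property you invoke in the completeness direction --- that $\str{N},v'\models\chi^{\ell-1}_{\str{M},w'}$ implies $\str{M},w'\equiv^\ML_{\ell-1}\str{N},v'$ --- is false at all levels below the top. Alternatively, you could sidestep the explicit inductive construction by defining $\chi^\ell_{\str{M},w}$ as the conjunction of all (finitely many, up to logical equivalence) $\ML$-formulae of nesting depth at most $\ell$ that hold at $\str{M},w$; satisfaction of that conjunction immediately yields $\equiv^\ML_\ell$, which is all your game argument requires. With either fix the proof closes as you describe.
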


In particular, the semantics of any modal formula (in $\ML$ or in $\MLCK$)
is preserved under full bisimulation equivalence (of either the
underlying plain S5 structures or their CK-expansions). Any formula of
$\MLCK$ is preserved under some level $\sim^\ell$ over CK-expansions
(but not over the underlying plain S5 structures!). 
 
\medskip
The following notion will be of special interest 
for our constructions; it describes a particularly neat
bisimulation relationship, mediated by a homomorphism 
(classical modal terminology speaks of bounded morphisms).
Bisimilar tree unfoldings are a well-known instance of (albeit, usually infinite)
bisimilar coverings with many applications.

\begin{definition}
\label{coverdef}
A surjective homomorphism $\pi \colon \hat{\str{M}} \rightarrow \str{M}$
between Kripke structures is called a \emph{bisimilar covering}
if $\hat{\str{M}},\hat{w} \sim \str{M},\pi(\hat{w})$ for all 
$\hat{w}$ from $\hat{\str{M}}$. 
\end{definition}

\subsection{Main lemmas}
\label{mainlemsubsection}

Control of multiplicities and cycles in Kripke structures plays an 
essential r\^ole towards the analysis of first-order expressiveness,
simply because they are \emph{not} controlled by bisimulation.

Core results from~\cite{DawarOttoAPAL09} deal with this
at the level of plain S5 Kripke structures, where 
products with finite Cayley groups of sufficiently large girth suffice
to avoid short cycles. These constructions
would not avoid the kind of cycles we have to deal with in
CK-structures. 
Instead we will have to look to stronger acyclicity properties, 
viz.\ coset acyclicity of Cayley groups 
in Section~\ref{cosetacycsec}.    
On the other hand, we can naturally model any CK-scenario 
up to bisimulation, indeed up to a bisimilar covering, in a Cayley group directly.
The following 
Lemma~\ref{CayleyCKlemproper}, which was already 
stated as Lemma~\ref{CayleyCKlem} in the
introduction, forms a cornerstone of our approach to the analysis of the
expressive power of first-order logic for $\sim$-invariant properties
over CK-structures. 

More fundamentally it says that, as far
as bisimulation invariant phenomena are concerned, Cayley structures 
can serve as representatives of arbitrary CK-structures. And this is
even true not just within the class of all CK-structures but also in
the more restricted setting of just finite CK-structures.

\begin{lemma}
\label{CayleyCKlemproper}
Any connected (finite) CK-structure admits a bisimilar covering by a
(finite) Cayley CK-structure.
\end{lemma}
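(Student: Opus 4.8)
\noindent The plan is to produce a \emph{bounded morphism} (a surjective homomorphism satisfying the \emph{back} property) from a Cayley structure onto the given CK-structure, since any such map is automatically a bisimilar covering in the sense of Definition~\ref{coverdef}: its graph $\{(g,\pi(g))\}$ is a bisimulation. First I would reduce to the underlying S5 frame. A connected CK-structure $\str{M}$ is of the form $\str{M}_0^\CK$ for a connected S5 frame $\str{M}_0=(W,(R_a)_{a\in\Gamma},(P_i)_{i\in I})$, and a surjective bounded morphism $\pi$ with respect to the basic relations $R_a$ automatically respects every derived relation $R_\alpha=\mathsf{TC}(\bigcup_{a\in\alpha}R_a)$ in both directions: homomorphisms send $R_\alpha$-paths to $R_\alpha$-paths, and the back property lifts such paths step by step. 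Since passage to CK-expansions is safe for bisimulation and Cayley structures satisfy $\str{M}^\CK=\str{M}$, it then suffices to cover $\str{M}_0$ by a Cayley structure read at the level of the agent relations $R_a$; the covering of $\str{M}=\str{M}_0^\CK$ follows.

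For the construction I would realise the local coset pattern agent by agent. For each $a\in\Gamma$ choose a set $E_a$ of non-trivial involutions of $W$, each preserving the partition of $W$ into $a$-classes, such that $H_a:=\langle \iota_e\colon e\in E_a\rangle$ acts transitively on every $a$-class; this is always possible, e.g.\ by taking within each clique $[w]_a$ transpositions that generate its full symmetric group. Setting $E:=\dot{\bigcup}_{a\in\Gamma}E_a$ and $\G:=\langle E\rangle$ gives a Cayley group with $\Gamma$-partitioned involutory generators as in Definition~\ref{Cayleystrucdef}, where $\G_a=\langle E_a\rangle$ and the $a$-classes of $\G^\CK$ are the cosets $g\G_a$. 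Letting $\G$ act on $W$ through $e\mapsto\iota_e$, I fix a base world $w_0$ and put $\pi(g):=g^{-1}(w_0)$, with assignment pulled back by $P_i^\G:=\pi^{-1}(P_i)$. One then checks the defining properties: \emph{forth}, since for $e\in E_a$ one has $\pi(ge)=\iota_e(\pi(g))\in[\pi(g)]_a$ as $\iota_e$ fixes $a$-classes setwise; \emph{back}, since $\pi(g\G_a)=H_a\cdot\pi(g)=[\pi(g)]_a$ by transitivity of $H_a$; and \emph{surjectivity}, which follows from connectivity of $\str{M}_0$ by reading a walk from $w_0$ to a target world as a product of suitable generators. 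Taking $\G$ as the concrete subgroup of $\mathrm{Sym}(W)$ already yields a finite Cayley covering whenever $W$ is finite, while the universal choice, the free product of the $\langle e\rangle$ for $e\in E$, gives the covering in the general, possibly infinite, case.

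The delicate point, and the place where heavier machinery enters, is obtaining a finite covering that additionally carries the prescribed degrees of acyclicity of its coset structure, as used later and as phrased in Lemma~\ref{CayleyCKlem}. The naive image in $\mathrm{Sym}(W)$ is finite but exercises no control over short coset cycles, so I would instead realise the same generator-and-coset pattern inside a finite Cayley group of large girth and high coset-acyclicity, available from~\cite{Otto12JACM}, and define $\pi$ as a developing map along the covering, which is well defined locally because up to the girth the group looks like the free product. I expect the main obstacle to be ensuring that passing to such a finite, high-girth group does not collapse cosets: the \emph{back} property requires that each coset $g\G_a$ still surject onto the whole $a$-class $[\pi(g)]_a$, and this is precisely what sufficiently large girth and coset-acyclicity guarantee, by ruling out the short relations among generators that would otherwise identify distinct preimages and destroy the lifting property.
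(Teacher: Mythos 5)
Your overall architecture matches the paper's: reduce to the underlying S5 frame (CK-expansion being safe for bisimulation), generate a permutation group by involutions acting on $W$, define the covering map by evaluating group elements at a base world $w_0$, pull back the propositional assignment, and use connectivity for surjectivity/transitivity. The one genuine difference in route is the choice of generators: you take, for each agent $a$, enough transpositions inside each $a$-clique to generate its symmetric group, whereas the paper takes exactly one involution $\pi_e$ per individual $R_a$-edge $e$ of $\str{M}$ and lets these act not on $W$ alone but on the disjoint union $\str{M}\oplus 2^{E}$ with an auxiliary hypercube component. That hypercube is not decoration: it is what forces the generators to be pairwise distinct, non-trivial involutions of the underlying set, so that $E=\dot{\bigcup}_a E_a$ really is a $\Gamma$-partitioned generator set as Definition~\ref{Cayleystrucdef} demands.

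This is where your construction has a concrete gap. If $a\neq b$ but $[u]_a=[u]_b=\{u,u'\}$ is a shared two-element class (which certainly occurs in CK-structures, and can occur locally even when $R_a\neq R_b$ globally), the only non-trivial involution of $W$ available inside that clique is the transposition $(u\ u')$, so your $E_a$ and $E_b$ are forced to contain the same element of $\mathrm{Sym}(W)$. Then $E$ is not a disjoint union, $\G_a$ and $\G_b$ share a generator they should not share, and the resulting frame is not a Cayley structure in the sense of Definition~\ref{Cayleystrucdef}; there is no freedom to choose differently within a two-element clique. The fix is exactly the paper's device: let the generators additionally act on an auxiliary component (the hypercube $2^E$ indexed by \emph{tagged} edges $(\{w,w'\},a)$) so that formally distinct generators become distinct permutations even when their restrictions to $W$ coincide or are trivial. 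With that repair your argument goes through, and the forth/back/surjectivity checks you give are correct; note also that your closing paragraph on coset acyclicity addresses Lemma~\ref{bettercoveringlem} rather than the present statement, which asks for no acyclicity, and that the paper obtains those stronger coverings by covering the underlying group itself via~\cite{Otto12JACM} rather than by a girth/developing-map argument.
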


\prf
We may concentrate on the underlying plain S5 structures
with accessibility relations $R_a$ for $a \in \Gamma$ (bisimilar
coverings are compatible with the bisimulation-safe passage to
CK-structures). 
Indeed, for the construction of the covering, we even go below that
level and decompose the given accessibility relations $R_a$ further
into constituents induced by individual $R_a$-edges. 

For given $\str{M} = (W,(R_a),(P_i))$ let 
$E := \dot{\bigcup}_{a \in \Gamma} R_a$ be the disjoint union of
the edge sets $R_a$,
where we identify an edge $e = (w,w')$ with its 
converse $(w',w)$ (or think of the edge relations as sets of unordered
pairs, or sets of size~$1$ for reflexive and size~$2$ for irreflexive edges).
Formally we may represent this disjoint union by tagged copies of the
individual edge pairs from each $R_a$ for $a \in \Gamma$ as
$E = 
\{ (\{w,w'\}, a) \colon a \in \Gamma, (w,w') \in R_a \}$,
which may be partitioned into subsets $E_a = \{ (\{w,w'\},a) \colon  (w,w') \in R_a \}$ 
corresponding to the individual $R_a$. 
Let $\str{M}\oplus 2^E$ stand for the undirected $E$-edge-labelled graph formed 
by the disjoint union of $\str{M}$ with the $|E|$-dimensional hypercube $2^E$.
The vertices of this hypercube are the $\{0,1\}$-valued
sequences indexed by the set $E$, 
with a symmetric $e$-edge between any pair of such sequences
whose entries differ precisely in the $e$-component.  
With $e \in E$ we associate the involutive permutation
$\pi_e$ of the vertex set $V$ of $\str{M}\oplus 2^E$ that precisely swaps 
all pairs of vertices in $e$-labelled edges. 
We note that $W$ is closed under the action of $\pi_e$.
For $e = (\{w,w'\},a)$, the permutation 
$\pi_e$ fixes all worlds in $W$ other
than $w,w'$; and if $e = (\{w,w\},a)$ is a reflexive
$a$-edge, then $\pi_e\restr W = \mathrm{id}_W$. In restriction to
$2^E$ on the other hand, $\pi_e$ has no fixed points, and 
$\pi_e \not= \pi_{e'}$ whenever $e \not= e'$ (even if 
$\pi_e\restr W = \pi_{e'}\restr W$, which can occur for $a \not= a'$
if $e = (\{w,w'\},a)$ and $e' =(\{w,w'\},a')$).

For $\G$ we take the subgroup of the symmetric group on $V$
that is generated by these $\pi_e$, which we regard as 
involutive generators of $\G$. This is justified since, as just
observed, the $(\pi_e)_{e \in E}$ and $1 \in \G$ are pairwise distinct
due to the $2^E$-component. We may thus identify   
$\pi_e$ with $e$ and regard the edge set $E$ as the subset 
$E = \{  \pi_e \colon e \in E \} \subset \G$, which generates $\G$ as
a group. We let $\G$ act on $V$ in the
natural fashion (from the right): for $g = e_1\cdots e_n$,  
\[
g \colon v \longmapsto v e_1\cdots e_n := (\pi_{e_n} \circ \cdots \circ
\pi_{e_1})(v).
\] 
This operation is well-defined as a group action,
since by definition $e_1 \cdots e_n = 1$ in $\G$ if, and only if,  
$\pi_{e_n} \circ \cdots \circ \pi_{e_1}$ fixes every $v \in V$.
It also leaves $W \subset V$ invariant as a set, i.e.\ the action can be
restricted to $W$. Then the map
\[
\barr{rcl}
\hat{\pi} \colon W \times \G &\longrightarrow& W
\\
(w,g) &\longmapsto& w g
\earr
\] 
is a bisimilar covering w.r.t.\ the following natural S5 
interpretations of edge relations $R_a$ 
as 
$R_a := \mathsf{TC} (\{ ((w,g),(w,ge)) \colon w\in W, g \in \G, e \in E_a \})$ 
over $W \times \G$.
This bisimilar covering directly extends to the 
induced S5 frames with accessibility relations
$R_\alpha$ for $\alpha \in \tau$ (again obtained as
transitive closures of corresponding unions). 
Moreover,
since $\str{M}$ is connected, $\G$ acts transitively on $W$
and we may restrict to a single orbit, i.e.\ to a single connected
sheet $\{ w_0 \} \times \G$
of the above multiple covering. 
This restriction corresponds to the identification of an (arbitrary)
distinguished world $w_0 \in W$ as a base point. We obtain $\pi$ as
the restriction of $\hat{\pi}$ to the subset
$\{ (w_0, g) \colon g \in \G \}$, which
is naturally isomorphic with the Cayley frame of $\G$. 
We may expand the Cayley frame $(G,(R_\alpha))$ 
in a unique manner to a Cayley structure 
$(G,(R_\alpha), (P_i))$ for which $\pi$ becomes a homomorphism onto
$\str{M}^\CK$. This is achieved by pulling back 
$P_i \subset W$ to its pre-image $\pi^{-1}(P_i)
\subset G$, which becomes the assignment to proposition $P_i$ on $\G$. 
The resulting 
\[
\pi \colon  (G,(R_\alpha), (P_i))
\longrightarrow \str{M}^\CK
\] 
provides the desired bisimilar covering of the CK-structure
$\str{M}^\CK$ by a Cayley structure.
Note that $\G$ and $(G,(R_\alpha), (P_i))$ are finite if $W$ is.
\eprf

We used the hypercube structure $2^E$ in the above as 
an auxiliary component to adapt the group to its purposes
in the covering, viz.\ in this case, to turn the set of individual 
accessibility edges of $\str{M}$ into a set of non-trivial and
mutually independent generators in the Cayley structure that 
covers $\str{M}$. Different variants of this idea are available. 
These allow us to adapt the group structure in order to make the
bisimilar covering more amenable for specific purposes. We discuss
some immediate such variants here; an even more important one will
then be discussed in much greater detail in Section~\ref{cayleysec}.

Firstly, the well-known tree-like bisimilar unfolding of 
S5 Kripke structures can be presented in a very 
simliar fashion based on free groups and their Cayley graphs.
We define the \emph{free} or \emph{acyclic} group with involutive
generator set $E$ over the set of reduced words over the
alphabet~$E$. An $E$-word $w=e_1\dots e_n \in E^\ast$ 
is \emph{reduced} if $e_{i+1}\neq e_i$, for all $1\leq i < n$.

\begin{definition}
The \emph{free group}~$\F(E)$ with involutive
generator set $E$ is the group that consists of all 
reduced words over the alphabet~$E$ without any non-trivial equalities,
together with the (reduced) concatenation of words as its
operation and the empty word as its neutral element.
\end{definition}

Using $\F(E)$ and its Cayley graph in place of the group $\G$ (as was
abstracted from permutation group action on $\str{M} \oplus 2^E$
above), we obtain the following.

\begin{lemma}
\label{treeunfoldlem}
Any connected CK-structure admits a bisimilar covering by a
Cayley CK-structure based on the Cayley graph of a free or acyclic group
with involutive generators, which itself is a tree structure. 
\end{lemma}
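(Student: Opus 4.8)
The plan is to reuse the construction from the proof of Lemma~\ref{CayleyCKlemproper} almost verbatim, replacing only the group that is built over the generator set. Concretely, I would start from a connected CK-structure $\str{M} = (W,(R_a),(P_i))$ and form exactly the same tagged, disjoint edge set $E = \dot{\bigcup}_{a \in \Gamma} E_a$ of individual $R_a$-edges, so that each $e \in E$ is again regarded as an involutive generator. The only change is that instead of letting $\G$ be the permutation group generated by the $\pi_e$ acting on $\str{M} \oplus 2^E$, I take $\G := \F(E)$, the free (acyclic) group on these involutive generators. The Cayley graph of $\F(E)$ is by construction a tree: reducedness of words means there are no non-trivial relations, hence no cycles in the edge-coloured graph $(G,(R_e)_{e \in E})$, which is precisely the tree-structure claim.

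Next I would set up the covering map. Fixing a base world $w_0 \in W$, I would define $\pi \colon \F(E) \to W$ by following the word from $w_0$: for a reduced word $g = e_1 \cdots e_n$, let $\pi(g)$ be the endpoint reached in $\str{M}$ by traversing the corresponding edges from $w_0$, where each generator $e \in E_a$ is read as a step along the underlying $R_a$-edge of $\str{M}$ that $e$ tags. Since every $e \in E$ corresponds to a genuine (possibly reflexive) edge of $\str{M}$, this walk is always defined, and connectedness of $\str{M}$ guarantees surjectivity onto the connected component of $w_0$. As before, I then pull back each $P_i$ along $\pi$ to obtain the propositional assignment on $G$, and define the accessibility relations $R_\alpha$ on the Cayley frame as the transitive closures of the unions of the corresponding generator-edges, exactly matching Definition~\ref{Cayleystrucdef}.

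The remaining task is to verify that $\pi$ is a bisimilar covering in the sense of Definition~\ref{coverdef}, i.e.\ that $\pi$ is a surjective homomorphism with $\F(E),g \sim \str{M}^\CK, \pi(g)$ for every $g$. That $\pi$ is a homomorphism of the plain S5 frames is immediate from the definition of the walk: a single generator-edge $e \in E_a$ maps to an $R_a$-edge, so $R_a$-edges are respected, and this lifts to the $R_\alpha$ by the bisimulation-safe passage to CK-structures noted in the excerpt. For the bisimulation I would exhibit \PII's winning strategy in the back\&forth game: the projection $\pi$ already furnishes the forth-direction (any move in the tree projects to a legal move below), while for the back-direction, whenever \PI\ moves along an $R_a$-edge from $\pi(g)$ to some $w'$ in $\str{M}$, \PII\ responds by appending to $g$ the generator $e \in E_a$ tagging that very edge, landing at $ge$ with $\pi(ge) = w'$; freeness guarantees this extension is always available and never blocked. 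The propositional assignment is matched by construction, so this is a winning strategy.

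The main obstacle is essentially a matter of bookkeeping rather than a genuine difficulty: one must handle reflexive and parallel edges correctly. A reflexive $a$-edge $e = (\{w,w\},a)$ tags a generator whose underlying $R_a$-step is a self-loop at $w$, so \PII\ must still have a legal reply (appending $e$ moves to a distinct tree-vertex $ge$ with $\pi(ge)=\pi(g)=w$), and the parallel-edge phenomenon — distinct generators $e \neq e'$ tagging edges between the same pair of worlds — must be kept separate in the free group even though they project to the same edge below. Because $E$ is a \emph{disjoint} tagged union, all these generators are genuinely distinct and the free-group construction keeps them independent, so the back-direction reply is always well-defined; verifying this is the only point requiring care, and it is exactly where the explicit tagging of $E$ in Lemma~\ref{CayleyCKlemproper} pays off. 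The finiteness clause of Lemma~\ref{CayleyCKlemproper} has of course no analogue here, since $\F(E)$ is infinite whenever $E$ is nonempty — this is the price of acyclicity, and explains why the finite version genuinely needs the permutation-group construction rather than the free group.
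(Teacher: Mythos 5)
Your proposal is correct and follows essentially the same route as the paper, which proves this lemma precisely by rerunning the construction of Lemma~\ref{CayleyCKlemproper} with the free group $\F(E)$ on the tagged involutive generators in place of the permutation group abstracted from the action on $\str{M}\oplus 2^E$. Your additional bookkeeping (generators acting as the transpositions $\pi_e$, reflexive and parallel edges kept distinct by the tagging, inevitability of infinite covers) matches the intended reading of the paper's one-line argument.
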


Note that the resulting bisimilar coverings are infinite in all but
the most trivial cases. Also observe that non-trivial S5- and CK-frames
cannot be trees. Rather, the above bisimilar coverings result in S5- or 
CK-structures that are generated from actual tree structures through 
transitive closure operations; in a sense they are as close to 
trees as possible, and coset acyclic in the sense to be discussed in
Section~\ref{cayleysec} (cf.\  Definition~\ref{NacyclicCKDef}).  

\medskip
Cayley graphs of \emph{large girth} have been obtained from 
permutation group actions in~\cite{Biggs89} and used in the construction
of \emph{finite} bisimilar coverings of multi-modal Kripke 
structures~\cite{OttoAPAL04} and of S5 structures~\cite{DawarOttoAPAL09}.
We could here similarly obtain finite bisimilar coverings of CK-structures
that are generated through transitive closures from Cayley graphs of 
large girth (i.e.\ without short generator
cycles). It turns out, however, that much stronger acyclicity
properties for Cayley structures are needed 
for our present purposes. 
The cyclic configurations that matter in Cayley frames are induced by 
$R_\alpha$-edges (which includes $R_a$-edges as a special case). 
Arising from transitive closures, these edges stem from paths of a priori 
unbounded lengths in terms of the underlying generator edges;
and equivalence classes for accessibility relations $R_\alpha$ are 
\emph{cosets} w.r.t.\ generated subgroups.  This is why 
levels of \emph{coset acyclicity}, rather than just lower bounds on girth,
will be extensively discussed in Section~\ref{cayleysec}.

\medskip
As another immediate variation of the main lemma, 
we consider \emph{richness} criteria. Simple variants of the above
covering construction allow us to locally boost multiplicities.
The \emph{multiplicity} of a bisimulation type in an $\alpha$-class
is the cardinality of the set of its realisations in this
class, and $k$-richness requires that this multiplicity is at least
$k$ (if not  $0$).

\begin{definition}
A CK-structure~\Mf is \emph{$k$-rich}, for some $k\in\N$, 
if for every $\emptyset \not= \alpha \in \tau$
the multiplicity of every bisimulation
type that is realised in an $\alpha$-class is at least $k$ 
in that class. 
\Mf is \emph{$\omega$-rich} if all these multiplicities are infinite.
\end{definition}

By augmenting the number of generators in the group $\G$ 
that are associated with $e \in E$ in the basic construction or its
variants we can achieve $k$-richness in (finite) bisimilar coverings
as in Lemma~\ref{CayleyCKlemproper} as well as $\omega$-richness 
in tree-based coverings as in Lemma~\ref{treeunfoldlem}. 
Technically it suffices to 
replace $E$ by $E \times \{ 0,\ldots,k\}$ 
or by $E \times \omega$,
and to let the group operation $\pi_{(e,i)}$ of the copies $(e,i)$
be the same as $\pi_e$ on $W$
while separating them in the hypercube component for the new $E$.
This trick boosts 
multiplicities by a factor of $2^{k}$ or $\omega$. 
(In fact the basic covering construction in the proof of 
Lemmas~\ref{CayleyCKlemproper} and~\ref{treeunfoldlem} typically already
introduces quite some boost in multiplicities compared to $\str{M}$ 
since the operation of $\pi_e$ on $W$ is rich in fixpoints.)

\begin{lemma}
\label{mainlemmarichness} 
For all $k \in\N$, every connected (finite) CK-structure admits a (finite)
bisimilar covering by a Cayley structure that is $k$-rich.
Every connected CK-structure admits a bisimilar covering by
an infinite $\omega$-rich Cayley structure based on a free or acyclic 
group with involutive generators whose Cayley graph is a tree.
\end{lemma}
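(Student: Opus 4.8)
The plan is to re-run the two covering constructions of Lemmas~\ref{CayleyCKlemproper} and~\ref{treeunfoldlem} after \emph{inflating} the generator set with redundant copies that act identically on $W$. Concretely, I would replace the edge set $E=\dot{\bigcup}_{a\in\Gamma}E_a$ by $E':=E\times\{0,\ldots,k\}$ (for the finite $k$-rich claim) or by $E':=E\times\omega$ (for the $\omega$-rich tree claim), with the refined partition $E'_a:=E_a\times\{0,\ldots,k\}$, resp.\ $E_a\times\omega$. I stipulate that each copy $(e,i)$ acts on $W$ exactly as the original, $\pi_{(e,i)}\restr W:=\pi_e\restr W$, while the $\pi_{(e,i)}$ stay pairwise distinct involutions by separating them in the new hypercube component $2^{E'}$ (resp.\ by taking them as formally distinct free generators). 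For the first claim I then take $\G'$ to be the permutation group on $\str{M}\oplus 2^{E'}$ generated by the $\pi_{(e,i)}$, built exactly as $\G$ was in Lemma~\ref{CayleyCKlemproper}; for the second I use the free group $\F(E')$ with its tree-shaped Cayley graph as in Lemma~\ref{treeunfoldlem}.

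That these inflated structures are still bisimilar coverings $\pi$ of $\str{M}^\CK$ needs no new idea. Because every copy $(e,i)$ induces the same map on $W$ as $e$ does, the projection onto $\str{M}$ and \PII's matching of moves are literally those from Lemmas~\ref{CayleyCKlemproper} and~\ref{treeunfoldlem}: a single $R_a$-move in $\str{M}$ is answered by sliding along any copy $\pi_{(e,i)}$ with $e\in E_a$, and conversely each copy-generator projects to an $R_a$-edge; the propositional assignment is again pulled back along $\pi$. Finiteness of $\G'$ and of the covering is preserved whenever $W$ is finite and only finitely many copies per edge are used.

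The real content is the multiplicity count, and here I would exploit that an S5 frame carries a reflexive edge $e^a_w=(\{w,w\},a)$ at every world $w$, so that each copy $\pi_{(e^a_w,i)}$ fixes all of $W$ pointwise yet is a nontrivial element of the agent subgroup $\G'_a$. Fix any covering world $\hat{w}$ over $w$ and any nonempty coalition $\alpha\ni a$. Multiplying $\hat{w}$ on the right by the $2^{k+1}$ elements of the subgroup generated by $\{\pi_{(e^a_w,i)}:i=0,\ldots,k\}$ produces worlds that all remain in the $\alpha$-class of $\hat{w}$ (they are products of $E'_\alpha$-generators), all project to $w$ (they are trivial on $W$), hence are all bisimilar to $\hat{w}$, and are pairwise distinct by the $2^{E'}$-separation (resp.\ by freeness). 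This yields at least $2^{k+1}\geq k$ realisations of the bisimulation type of $\hat{w}$ in its $\alpha$-class in the finite case, and infinitely many in the tree case. The only delicate point---and the closest thing to an obstacle---is to secure these three properties \emph{simultaneously} and \emph{uniformly} over all $\alpha$ and all realised types at once; this is exactly why every edge must be inflated (so that the boost is available in every $\alpha$-class) and why the identities ``trivial on $W$'', ``generated from $E'_\alpha$'', and ``distinct in the $2^{E'}$-component'' must be read together. The degenerate coalition $\alpha=\emptyset$ carries only singleton classes and is vacuous here.
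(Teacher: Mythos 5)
Your proposal is correct and follows essentially the same route as the paper: the paper likewise replaces $E$ by $E\times\{0,\ldots,k\}$ (resp.\ $E\times\omega$), lets each copy $\pi_{(e,i)}$ act on $W$ exactly as $\pi_e$ while separating the copies in the hypercube (resp.\ free-generator) component, and observes that this boosts local multiplicities by a factor of $2^k$ (resp.\ $\omega$). Your explicit count via the reflexive loop edges $(\{w,w\},a)$, which act trivially on $W$ yet generate a subgroup of size $2^{k+1}$ inside each $\G'_\alpha$ with $\alpha\neq\emptyset$, is a clean instantiation of the fixpoint-richness the paper only alludes to.
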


The crucial insight of Lemma~\ref{CayleyCKlemproper}
justifies the following, since -- up to
bisimulation -- we may now transfer any model-theoretic question about
(finite) CK-structures to (finite) Cayley structures. 
Lemma~\ref{bettercoveringlem} will actually offer (finite)
representations by Cayley structures with additional  
acyclicity and richness properties. Those are again obtained as 
coverings by Cayley groups with corresponding properties 
from~\cite{Otto12JACM}.

\begin{proviso}
From now on consider Cayley structures as prototypical
representatives of CK-structures. 
\end{proviso}

\subsection{Upgrading for expressive completeness}
\label{upgradesec}

The key to the expressive completeness results from~\cite{OttoNote}
to~\cite{DawarOttoAPAL09,Otto12JACM} lies in establishing the following
finiteness or compactness phenomenon for $\sim$-invariant
$\FO$-formulae $\phi(x)$ over the relevant classes $\mathcal{C}$ of structures:
\[
(\dagger) \quad
\bigl\{ \phi \in \FO \colon 
\phi \mbox{ $\sim$-invariant over }\mathcal{C} \}  \; = \;\nt
\bigcup_{\ell\in\N}\bigl\{ \phi \in \FO \colon 
\phi \mbox{ $\sim^\ell$-invariant over }\mathcal{C} \}.
\]

\begin{figure}
\[
\xymatrix{
\str{M},w \ar@{-}[d]|{\rule{0pt}{1ex}\sim} \ar @{-}[rr]|{\;\sim^\ell\,}
&& \str{N},v \ar @{-}[d]|{\rule{0pt}{1ex}\sim}
\\
\str{M}^\ast\!\!,w^\ast \ar@{-}[rr]|{\rule{0pt}{1.5ex}\;\equiv_q}
&& \str{N}^\ast\!\!,v^\ast 
}
\]
\caption{Upgrading $\sim^\ell$ to $\equiv_q$ in bisimilar companions.}
\label{upgradefigure}
\end{figure}

This finiteness property in turn follows if suitable levels
$\sim^\ell$ can be upgraded in bisimilar companions within 
$\mathcal{C}$ so as to guarantee equivalence w.r.t.\ the given $\phi$
of quantifier rank $q$. The argument is as follows.
Assume that for suitable $\ell = \ell(q)$, any pair of pointed structures
$\str{M},w \sim^\ell \str{N},v$ from $\mathcal{C}$ admits the
construction of bisimilar companion structures $\str{M}^\ast,w^\ast \sim \str{M},w$ and 
$\str{N}^\ast,v^\ast \sim \str{N},v$ in $\mathcal{C}$ such that 
$\str{M}^\ast,w^\ast \equiv_q \str{N}^\ast,v^\ast$, as in
Figure~\ref{upgradefigure}. Then the detour through the lower rung of 
Figure~\ref{upgradefigure} shows that over $\mathcal{C}$ any 
$\sim$-invariant $\FO$-formula of quantifier
rank $q$ is indeed $\sim^\ell$-invariant, and hence expressible in
$\ML$ at modal nesting depth $\ell$ over $\mathcal{C}$ by Theorem~\ref{EFthm}.

\medskip
\paragraph*{Obstructions to be overcome.}
Considering Figure~\ref{upgradefigure}, it is clear that 
$\str{M}^\ast$ and $\str{N}^\ast$ must avoid distinguishing features
that are definable in $\FO_q$ ($\FO$ at quantifier rank up to $q$)
but cannot be controlled by $\sim^\ell$ (for a level $\ell=\ell(q)$ to
be determined). 
Features of this kind that would beat any level $\ell$ involve 
\bre
\item[--]
small multiplicities w.r.t.\ 
accessibility relations, 
like fixed but differing small cardinalities for definable properties of
worlds in $\alpha$-classes, and  
\item[--] 
short cycles of fixed lengths w.r.t.\ combinations of 
the accessibility relations $R_\alpha$.
\ere
In the setting of plain Kripke structures rather than our
CK-structures, and thus for many of the more immediate
variations on Theorem~\ref{vanBenthemthm}, it turns out that both
these obstacles can be eliminated in bisimilar coverings by direct
products: 
multiplicities can be boosted above critical thresholds in
products with large enough cliques, and 
short cycles can be eliminated in products with Cayley groups of large girth. 

We have also seen first indications above how to eliminate differences
involving small multiplicities in (finite) bisimilar coverings by Cayley structures
that are sufficiently rich as in Lemma~\ref{mainlemmarichness}; 
and Lemma~\ref{treeunfoldlem}  at least allows us to focus on 
Cayley structures that avoid cycles as far as possible 
at the level of the underlying Cayley graph
if finiteness does not matter. 

The great challenge, however,
lies with the game arguments that are typically used to
establish $\equiv_q$.
The classical $q$-round first-order Ehrenfeucht--Fra\"iss\'e or pebble
game, which serves to establish $\equiv_q$-equivalence of two
structures (cf., e.g.~\cite{Hodges93,EbbinghausFlu99}), 
has to be based on some useful structural analysis of the
target structures $\str{M}^\ast$ and $\str{N}^\ast$.
While many earlier upgrading results in this vein could rely on classical 
Gaifman locality arguments for this structural analysis, the situation  
here is different. Indeed Gaifman locality is 
completely trivialised in connected CK-structures, 
which must form a single Gaifman clique w.r.t.\ $R_\Gamma$. 
Naively it thus seems all but hopeless to use locality techniques
in structures that are as dense in terms of their edge relations as
CK-structures are. 
But despite its denseness, the highly regular edge pattern of Cayley structures will
allow us to invoke notions of locality at different levels of
granularity, which are based on the various
intermediate $R_\alpha$ between the extremes of the
individual $R_a$ and the trivial $R_\Gamma$.

\section{Coset acyclicity and its structure theory}
\label{cayleysec}

This section is the technical core of this paper.
Here we introduce all tools required to overcome
the immediate obstructions 
for upgrading $\sim^\ell$ to $\equiv_q$ 
for suitable $\ell=\ell(q)$, over the class of (finite) \ca structures.
To overcome problems with avoidable short cycles,
we introduce notions of \emph{coset acyclicity}
from~\cite{Otto12JACM}
for our purposes, and show 
that every (finite) \ca structure admits a bisimilar covering by a 
(finite) Cayley structure that is coset $n$-acyclic.
We also introduce
the \emph{dual hypergraph} associated with a Cayley structure.
This dual picture will be an important tool for our upgrading 
arguments later and also allows us to relate coset acyclicity to 
classical hypergraph acyclicity.
The second part of this section, Section~\ref{sec:structureTheory},
then introduces \emph{freeness} as the most important notion in the 
structure theory of suitable Cayley structures
for our upgrading arguments, with core results from~\cite{Ca19}.

\subsection{Coset acyclic \ca structures}
\label{sec:cosetStructres}

In the case of CK-frames and \ca frames one cannot hope to avoid
cycles outright.%
\footnote{This is even true of S5 structures, but at least those 
cannot have short cycles w.r.t.\ long-range edge relations like our
$R_\alpha$.}
Since any \ca frame is connected, any two of its worlds~$w$ and~$w'$ are linked
by a $\Gamma$-edge in any \ca frame. This is of no concern for the upgrading 
(in fact, $R_\Gamma$ is trivially $\FO$-definable in \ca frames).
But crucial distinctions can occur w.r.t.\ the reducts of \ca frames 
without $\Gamma$-edges: worlds~$w$ and $w'$ may not be related
by any single $\alpha$-edge for $\alpha \strictsubset \Gamma$, but via
a non-trivial short path that uses mixed edge relations.
Assume we have \ca structures~\Mf and~\Nf,
and pairs of worlds $(w,v),(w',v')\in W\times V$
such that $\Mf,w\sim^\ell\Nf,v$ and $\Mf,w'\sim^\ell\Nf,v'$.
It is possible to have two different non-trivial short paths
from~$w$ to~$w'$ but essentially only one such path
from~$v$ to~$v'$; and  this difference could be expressible in
$\FO_q$.  
The solution is to find bisimilar
companions for~\Mf and~\Nf that are locally acyclic
w.r.t.\ non-trivial overlaps between $\alpha$-classes, i.e.\
$\alpha$-cosets $[w]_\alpha$ for various $\alpha$. 
Simultaneously, every such coset $[w]_\alpha$ of the structures
must be locally acyclic, in the same sense, w.r.t.\
$\beta$-classes for $\beta \strictsubset \alpha$.
It turns out that the notion of \emph{coset acyclicity}
from~\cite{Otto12JACM} is what we can use.

\subsubsection{Coset cycles}
\label{cosetacycsec}

Recall from the definition of Cayley structures 
that the accessibility relations $R_a$  (for individual agents $a
\in \Gamma$) or $R_\alpha$ (for groups of agents $\alpha \in \tau =
\mathcal{P}(\Gamma)$, of which the $R_a$ are just a special case) 
arise from transitive closures of \emph{sets} of edge
relations induced by individual generators of the underlying group
structure. 

Cyclic configurations
w.r.t.\ combinations of different $R_\alpha$ are cyclic configurations
formed by cosets rather than by generators of the underlying group.
Correspondingly we are interested in Cayley  frames that avoid short coset cycles
rather than just short generator cycles (i.e.\ large girth).

\begin{definition}
\label{NacyclicCKDef}
Let $\Mf$ be a~\ca frame.
\bre
\item
A \emph{coset cycle of length~$m \geq 2$ in $\Mf$} is a cyclic tuple 
$(w_i,\alpha_i)_{i\in\Z_m}$, where, for all $i \in \Z_m$,
$(w_i,w_{i+1})\in R_{\alpha_i}$ and 
\[ 
[w_{i}]_{\alpha_{i-1}\cap \alpha_i}\cap [w_{i+1}]_{\alpha_i\cap
  \alpha_{i+1}}=\emptyset. 
\]
\item
$\str{M}$ is \emph{acyclic} if it does not have any coset
cycles,
and \emph{$n$-acyclic} if it does not contain coset
cycles of lengths up to~$n$.
\ere
\end{definition}

In Section~\ref{mainlemsubsection}
we showed that every \ca structure has a bisimilar
covering by an infinite \ca structure
that is based on a free group with
involutive generators (cf.\ Lemma~\ref{treeunfoldlem}).
It is easy to see that this covering is coset acyclic:
any non-trivial coset cycle would stem from a non-trivial 
generator cycle. The two kinds of acyclicity coincide 
at the level of full acyclicity 
because the blowup in length in the passage from 
coset-steps to generator-steps is not taken into account.
Together with the $\omega$-richness property from Lemma~\ref{mainlemmarichness}
these coverings 
would suffice for proving the classical version
of our characterisation theorem.
For the finite model theory version, we need bisimilar
coverings that are finite and, at the same time, 
\emph{sufficiently coset acyclic} and \emph{sufficiently rich}. 
The richness part is already covered by Lemma~\ref{mainlemmarichness}.
Suitable levels of coset acyclicity in finite \ca groups
were introduced in~\cite{Otto12JACM}.

\begin{definition}
\label{def:OttoCosetCycle}
\label{def:OttoNacyclic}
Let $\G$ be a \ca group with generator set~$E$.
\bre
\item
A \emph{coset cycle of length~$m$ in $\G$} is a cyclic tuple
$(g_i,\alpha_i)_{i\in\Z_m}$ with $g_i\in\G$ and $\alpha_i\subseteq E$,
for all $i \in \Z_m$, where
$g_i^{-1} g_{i+1}\in \G_{\alpha_i}$ and 
\[ 
g_{i}\G_{\alpha_{i-1}\cap \alpha_i}\cap g_{i+1}\G_{\alpha_i\cap
  \alpha_{i+1}}=\emptyset. 
\]
\item
$\G$ is \emph{acyclic} if it does not contain any coset
cycles, and \emph{$n$-acyclic} if it does not contain coset
cycles of lengths up to~$n$.
\ere
\end{definition}

As shown in~\cite{Otto12JACM}, every finite \ca group
can be covered by a finite, $n$-acyclic \ca group, for arbitrary
$n\in\N$.

\begin{lemma}\label{finiteGroupCovering}
 For every finite \ca group~$\G$ with finite generator set~$E$
 and every $n\in\N$, there is a finite, $n$-acyclic \ca group~$\hat{\G}$
 with generator set~$E$ such that there is a surjective homomorphism
 $\pi\colon\hat{\G}\to\G$. 
\end{lemma}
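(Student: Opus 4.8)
The plan is to realise $\hat\G$ as a finite quotient of the free involutive group $\F(E)$ from Lemma~\ref{treeunfoldlem}. Since the Cayley graph of $\F(E)$ is a tree, $\F(E)$ is fully coset acyclic, and the canonical map sending each generator of $\F(E)$ to the corresponding involution of $\G$ is a surjective homomorphism $\rho\colon\F(E)\to\G$. What we lack is finiteness. I would therefore look for a finite-index normal subgroup $N\trianglelefteq\F(E)$ with $N\subseteq\ker\rho$ such that $\F(E)/N$ has no coset cycle of length at most $n$; then $\hat\G:=\F(E)/N$ is finite with generator set $E$, and $\rho$ factors through a surjection $\pi\colon\hat\G\to\G$. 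Equivalently, one constructs a finite group $H$ together with a generator-preserving homomorphism $\sigma\colon\F(E)\to H$ whose coset structure is $n$-acyclic, and takes $\hat\G$ to be the image of $(\rho,\sigma)\colon\F(E)\to\G\times H$; the projection to the first factor is the required $\pi$, and any coset cycle in $\hat\G$ projects to one in $H$.

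The heart of the matter is thus the construction of the finite, coset-$n$-acyclic target $H$, and the natural route is induction on $n$. Assume a finite, coset-$(n-1)$-acyclic cover $H_{n-1}$ is already available, the base case being trivial. To eliminate the coset cycles of length exactly $n$ while preserving acyclicity at shorter lengths, I would pass to a controlled extension $H_n$ — for instance a subdirect product of $H_{n-1}$ with an auxiliary permutation group that records, along any sequence of coset-steps, enough local history that a length-$n$ coset cycle cannot close up without already having closed at some shorter length. Finiteness of the recorded history keeps $H_n$ finite, and the projection $H_n\to H_{n-1}$ keeps it a cover. This is precisely where one should exploit the duality between the coset structure of a Cayley group and an associated dual hypergraph $\HH$: coset $n$-acyclicity of the group corresponds to the absence of short chordless cycles in $\HH$, so the known finite acyclic hypergraph covering theorems can be transported back to the group side to drive the induction.

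The step I expect to be the main obstacle is exactly this elimination of length-$n$ coset cycles, because coset cycles are genuinely more subtle than generator cycles: a coset step along $R_{\alpha_i}$ corresponds to a product of unboundedly many generators, so a short coset cycle may be built from arbitrarily long generator paths. Consequently the classical large-girth constructions (e.g.\ Biggs' Cayley graphs of large girth) do \emph{not} suffice — they control only cycles measured in generator-steps — and the construction must instead be acyclic when distances are measured in coset-steps. Verifying that $H_n$ really kills all length-$n$ coset cycles amounts to showing that any such cycle $((g_i,\alpha_i))_{i\in\Z_n}$ with the non-overlap condition $g_i\G_{\alpha_{i-1}\cap\alpha_i}\cap g_{i+1}\G_{\alpha_i\cap\alpha_{i+1}}=\emptyset$ would force, through the history component, a configuration violating $(n-1)$-acyclicity of $H_{n-1}$, a contradiction. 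The remaining bookkeeping — that $\hat\G$ retains $E$ as a set of distinct non-trivial involutions and that $\pi$ is a genuine generator-preserving surjection — is then routine once the acyclic target is in hand.
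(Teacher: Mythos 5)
First, for calibration: the paper does not prove this lemma at all --- it is imported verbatim from~\cite{Otto12JACM}, where it is a main result requiring a substantial construction --- so there is no in-paper argument to compare against, and I assess your sketch on its own terms. Your framing is correct on two points: large-girth constructions \`a la Biggs control only generator cycles and cannot deliver coset acyclicity, and $\F(E)$ gives the infinite fully acyclic cover for free. But the reduction you lean on is unsound. You take $\hat{\G}$ to be the image of $(\rho,\sigma)$ in $\G\times H$ for a single finite $n$-acyclic $H$ and assert that ``any coset cycle in $\hat{\G}$ projects to one in $H$.'' The edge conditions $g_i^{-1}g_{i+1}\in\G_{\alpha_i}$ do project, but the defining non-degeneracy condition of Definition~\ref{def:OttoCosetCycle}, namely $g_i\G_{\alpha_{i-1}\cap\alpha_i}\cap g_{i+1}\G_{\alpha_i\cap\alpha_{i+1}}=\emptyset$, does not: a generator-respecting homomorphism maps these cosets \emph{onto} the corresponding cosets of $H$, and disjoint sets can have intersecting images. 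A coset cycle in the subdirect product may therefore project to a degenerate closed walk in $H$, from which $n$-acyclicity of $H$ yields no contradiction. This is not a removable technicality: relations such as $(e_1e_2)^{K}=1$, which necessarily hold in any finite $H$, can interact with incompatible element orders in $\G$ (take $\G=S_3$ on three transpositions with $K$ coprime to $3$) so that $\G\otimes H$ already violates $2$-acyclicity in the sense of Lemma~\ref{cutChar}, however acyclic $H$ is. The covering cannot be decoupled from $\G$ by passing through one universal acyclic target; it has to be built over $\G$ itself.

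Second, the actual content of the lemma --- the step producing a finite extension that kills length-$n$ coset cycles while preserving shorter-level acyclicity and finiteness --- is left as a black box (``a subdirect product with an auxiliary permutation group that records enough local history''). That step is where essentially all of the work in~\cite{Otto12JACM} lies, and the difficulty you yourself flag (a single coset step hides unboundedly many generator steps, so no finite record of generator-level history suffices) is exactly why no off-the-shelf history construction is known to work; the sketch does not resolve it. The proposed escape route, transporting ``known finite acyclic hypergraph covering theorems'' back to the group side to drive the induction, is moreover circular: in~\cite{Otto12JACM} the finite $N$-acyclic hypergraph covers are \emph{derived from} the $n$-acyclic \ca groups, not the other way around. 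So the proposal correctly identifies the obstructions but closes neither of its two steps.
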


Combining Lemma~\ref{finiteGroupCovering} with the main lemmas
from Section~\ref{mainlemsubsection}, we obtain the desired
coverings for finite \CK-structures. We apply Lemma~\ref{finiteGroupCovering}
to the \ca group that underlies the \ca structure that we get
from Lemma~\ref{mainlemmarichness} and define a bisimilar
covering as in the proof of Lemma~\ref{CayleyCKlemproper}.

\begin{lemma}\label{le:mainLemmaFin}
\label{bettercoveringlem}
 For all $k,n\in\N$, every connected (finite) \CK-structure
 admits a (finite) bisimilar covering by a \ca structure
 that is $k$-rich and coset $n$-acyclic.
\end{lemma}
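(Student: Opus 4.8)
The plan is to combine the two ingredients already assembled in the excerpt: the richness-boosting covering of Lemma~\ref{mainlemmarichness} and the acyclicity-boosting group covering of Lemma~\ref{finiteGroupCovering}, using the generic covering recipe from the proof of Lemma~\ref{CayleyCKlemproper}. Concretely, I would proceed in three stages. First, given a connected (finite) \CK-structure~$\str{M}^\CK$ and parameters $k,n$, apply Lemma~\ref{mainlemmarichness} to obtain a (finite) bisimilar covering $\pi_1 \colon \str{M}_1 \to \str{M}^\CK$ by a \ca structure $\str{M}_1$ that is $k$-rich; let $\G$ with $\Gamma$-partitioned involutive generator set $E$ be the underlying \ca group, so that $\str{M}_1 = (G,(R_\alpha),(P_i))$. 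Second, apply Lemma~\ref{finiteGroupCovering} to $\G$ and the chosen $n$ to obtain a finite, $n$-acyclic \ca group $\hat\G$ with the \emph{same} generator set~$E$ together with a surjective homomorphism $p \colon \hat\G \to \G$. Third, lift the propositional assignment along $p$ to turn the Cayley frame of $\hat\G$ into a \ca structure $\hat{\str{M}}$ and verify that the resulting composite map is the desired $k$-rich, coset $n$-acyclic bisimilar covering.

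For the third stage I would define the covering exactly as in the proof of Lemma~\ref{CayleyCKlemproper}: since $p$ is a surjective group homomorphism respecting the generator set $E$ (and hence its $\Gamma$-partition $E = \dot{\bigcup}_{a} E_a$), it induces a map on Cayley frames sending the $R_e$-edge $(\hat g,\hat g e)$ of $\hat\G$ to the $R_e$-edge $(p(\hat g),p(\hat g)e)$ of $\G$, and therefore commutes with the transitive-closure passage that defines every $R_\alpha$. Pulling back each $P_i$ along $p$ defines the propositional assignment on $\hat\G$, so $p$ becomes a homomorphism $\hat{\str{M}} \to \str{M}_1$ of \ca structures. I would then check it is a bisimilar covering in the sense of Definition~\ref{coverdef}: the homomorphism property gives \PII\ the forth moves, and surjectivity of $p$ on each coset $\hat g\,\hat\G_\alpha \to p(\hat g)\,\G_\alpha$ gives her the back moves, so $\hat{\str{M}},\hat g \sim \str{M}_1, p(\hat g)$. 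Composing, $\pi := \pi_1 \circ p$ is a bisimilar covering of $\str{M}^\CK$ by $\hat{\str{M}}$, and $\hat{\str{M}}$ is finite whenever $\str{M}^\CK$ is, since both coverings preserve finiteness.

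It remains to argue that $\hat{\str{M}}$ inherits \emph{both} desired properties, and this is where I expect the only real subtlety. Coset $n$-acyclicity should transfer cleanly: the coset cycles of the \ca \emph{frame} in Definition~\ref{NacyclicCKDef} correspond precisely to the coset cycles of the underlying \ca \emph{group} in Definition~\ref{def:OttoCosetCycle} under the identification of $\alpha$-classes with cosets of $\G_\alpha$, so $n$-acyclicity of $\hat\G$ as a group yields coset $n$-acyclicity of $\hat{\str{M}}$ as a frame; I would spell out this dictionary between the two definitions explicitly. The genuinely delicate point is $k$-richness, because richness is a property of \emph{bisimulation types} realised within $\alpha$-classes, and a priori passing to a further covering $\hat{\str{M}} \to \str{M}_1$ could alter local multiplicities. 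The saving observation is that a bisimilar covering can only \emph{increase} these multiplicities: each world of $\str{M}_1$ of a given bisimulation type, together with its $k$ realisations in a fixed $\alpha$-class, has at least one preimage in every $\alpha$-coset of $\hat{\str{M}}$ mapping onto that class (again by surjectivity of $p$ on cosets), and preimages of bisimilar worlds are bisimilar since $p$ is a bisimilar covering. Hence every $\alpha$-coset of $\hat{\str{M}}$ still realises at least $k$ copies of each bisimulation type it meets, so $k$-richness survives the second covering. Assembling these verifications establishes the lemma for arbitrary $k,n \in \N$.
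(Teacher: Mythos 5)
Your proposal matches the paper's own (very terse) argument exactly: apply Lemma~\ref{mainlemmarichness} to get a $k$-rich \ca covering, then apply Lemma~\ref{finiteGroupCovering} to the underlying \ca group and redo the covering construction from the proof of Lemma~\ref{CayleyCKlemproper}. Your added verifications --- that frame coset $n$-acyclicity follows from group coset $n$-acyclicity (noting that the group notion quantifies over arbitrary generator subsets and is thus the stronger one), and that $k$-richness survives the second covering because $p$ is surjective on cosets and preserves bisimulation types --- are correct and in fact supply details the paper leaves implicit.
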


Here and in what follows it is important to keep in mind that 
$\alpha$-classes in \ca frames and structures are nothing but left cosets 
w.r.t.\ subgroups $\G_\alpha$ in the underlying Cayley group. This is
clearly reflected in the parallelism of Definition~\ref{NacyclicCKDef} 
and~\ref{def:OttoCosetCycle}. Absence of coset $2$-cycles
in a \ca frame based on $\G$, for instance,
just says that $\G_\alpha \cap \G_\beta = \G_{\alpha \cap \beta}$ for
all $\alpha,\beta \subset \Gamma$.

\begin{observation}
\label{cutChar}
Let $\Mf$ be a~\ca frame based on the Cayley group $\G$.
\Mf is $2$-acyclic if, and only if, for all~$w\in W, \alpha,\beta\in\tau$,
$[w]_\alpha\cap [w]_\beta=[w]_{\alpha\cap\beta}$, 
if, and only if, 
 $\G_\alpha \cap \G_\beta = \G_{\alpha \cap
  \beta}$
for all $\alpha,\beta\in\tau$.
\end{observation}

\subsubsection{2-acyclicity}\label{sec:2-acyclicity}

We use Observation~\ref{cutChar} 
to show that $2$-acyclic \ca frames display a high
degree of regularity that will be essential for many 
of the notions to be introduced in this and the following
sections.
While arbitrary S5- or CK-frames impose very little 
structure on the overlap patterns between the equivalence classes 
w.r.t.\ various $R_\alpha$, we shall see that in $2$-acyclic \ca
frames, e.g.\ any pair of 
vertices is connected by $R_\alpha$ for a unique minimal set 
$\alpha\in\tau$.

In the next section, we shall investigate the structure
of the dual  hypergraph associated with a Cayley frame
(cf.~Definition~\ref{dualhypergraphdef}). We
anticipate the definition of the \emph{dual hyperedge}
which is convenient for notational purposes here as well.

\begin{definition}
\label{dualhyperedgedef}
In a \ca frame $\str{M}$ define the \emph{dual hyperedge}
induced by a world $w$ to be the set of cosets
\[
\brck{w} := \{ [w]_\alpha \colon \alpha \in \tau \}.
\]
\end{definition}

The following lemma from~\cite{Ca19} is straightforward from
the definitions. It shows in particular that two worlds~$w,v$ in  a 2-acyclic structure
are connected by a unique minimal
set of agents~$\alpha$, 
i.e.\ a set $\alpha$ for which 
$[w]_{\beta}=[v]_{\beta}$ if and only if
$\beta\supseteq\alpha$. This then justifies Definition~\ref{agtdefn}.

\begin{lemma}
\label{le:2acycProps}
In a $2$-acyclic~\ca frame~$\Mf$ with worlds $w, w_1,\dots,w_k$ 
and sets of agents $\alpha_1,\ldots,\alpha_k \in \tau$:
\begin{enumerate}
\item
 For $\beta := \bigcap_{1\leq i\leq k}\alpha_i$: \;
 $\displaystyle w\in\bigcap_{1\leq i\leq k}[w_i]_{\alpha_i}
    \;\; \Leftrightarrow \;\;
    \bigcap_{1\leq i\leq k}[w_i]_{\alpha_i} = [w]_\beta$.
\item
 The set $\bigcap_{1\leq i\leq k}\brck{w_i}$ 
 has a least element in the sense that there is an $\alpha_0 \in \tau$
 such that $[w_1]_{\alpha_0} \in \bigcap_{1\leq i\leq k}\brck{w_i}$
 and, for any $\alpha \in \tau$ and $1 \leq i \leq k$: 
 $$ [w_i]_{\alpha} \in \bigcap_{1\leq i\leq k}\brck{w_i}
 \quad \Leftrightarrow \quad
 \alpha_0 \subseteq \alpha. $$
\end{enumerate}
\end{lemma}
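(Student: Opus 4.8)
The plan is to derive both parts directly from the $2$-acyclicity characterisation of Lemma~\ref{cutChar}, which collapses intersections of cosets through a common world via $[w]_\alpha\cap[w]_\beta=[w]_{\alpha\cap\beta}$. Everything else is bookkeeping in the finite lattice $\tau=\mathcal{P}(\Gamma)$, where $R_\alpha$ is monotone in $\alpha$.

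For part~(1) the right-to-left implication is immediate, since $w\in[w]_\beta$ by reflexivity. For the converse I would argue as follows: if $w\in\bigcap_{i}[w_i]_{\alpha_i}$, then $(w,w_i)\in R_{\alpha_i}$ for each $i$, so $[w_i]_{\alpha_i}=[w]_{\alpha_i}$ and the intersection equals $\bigcap_{i}[w]_{\alpha_i}$. A routine induction on $k$, applying Lemma~\ref{cutChar} at each step, then gives $\bigcap_{i}[w]_{\alpha_i}=[w]_{\alpha_1\cap\dots\cap\alpha_k}=[w]_\beta$.

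For part~(2) I would first unfold membership in the dual hyperedges into a symmetric condition. Since $[w_i]_\alpha\in\brck{w_j}$ holds exactly when $w_j\in[w_i]_\alpha$, we get that $[w_i]_\alpha\in\bigcap_{j}\brck{w_j}$ iff all of $w_1,\dots,w_k$ lie in the single $\alpha$-class $[w_i]_\alpha$; crucially, this condition is independent of the chosen representative $w_i$. Writing $S:=\{\alpha\in\tau\colon w_1,\dots,w_k \text{ share a common }\alpha\text{-class}\}$, the assertion reduces to showing that $S$ is a principal filter. Nonemptiness follows from connectedness of \ca frames ($\Gamma\in S$, as $R_\Gamma=W\times W$), and up-closure from $R_\alpha\subseteq R_{\alpha'}$ whenever $\alpha\subseteq\alpha'$. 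I would then put $\alpha_0:=\bigcap_{\alpha\in S}\alpha$.

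The one step with genuine content is closure of $S$ under intersection, and this is exactly where $2$-acyclicity enters: if $\alpha,\beta\in S$, then all $w_i$ lie in $[w_1]_\alpha\cap[w_1]_\beta=[w_1]_{\alpha\cap\beta}$ by Lemma~\ref{cutChar}, so $\alpha\cap\beta\in S$. As $\tau$ is finite, closure under intersection forces $\alpha_0\in S$ and $S=\{\alpha\colon\alpha_0\subseteq\alpha\}$, which is precisely the claimed least-element property once translated back through the equivalence above. I expect no deeper obstacle; the only points demanding care are the correct unfolding of $\brck{\cdot}$-membership and the observation that the common-class condition does not depend on the representative $w_i$, so that the characterisation holds uniformly in $i$.
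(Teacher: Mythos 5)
Your proof is correct. The paper does not actually prove this lemma itself — it is imported from~\cite{Ca19} — so there is no in-paper argument to compare against, but your derivation is the natural one: part~(1) by reducing to classes through the common world $w$ and iterating Lemma~\ref{cutChar}, and part~(2) by unfolding $\brck{\cdot}$-membership into the representative-independent condition that all $w_j$ share an $\alpha$-class, then showing that the resulting set of labels is a principal filter (nonempty via $\Gamma$, upward closed by monotonicity of $R_\alpha$, and closed under intersection exactly via Lemma~\ref{cutChar}). All steps check out, including the two points you flag as needing care.
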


We shall often blur the distinction between a finite set (of
worlds) and its enumeration as a tuple, using notation like $\wbar$ 
for a finite collection of worlds $w$.

\begin{definition}
\label{agtdefn}
 In a $2$-acyclic \ca frame we denote the unique \emph{minimal set of
 agents that connects the worlds in $\wbar$} by~$\ag(\wbar) \in \tau$.
\end{definition}

So, for each one of the worlds $w \in \wbar$, 
$\ag(\wbar) = \bigcap \{ \alpha \in \tau \colon \wbar \subset
[w]_\alpha \}$. 
Intuitively, $\ag(\wbar)$ sets the scale for zooming-in on
the minimal substructure that connects the worlds~$\wbar$.
We shall see later that, regarding distances between the
worlds~$\wbar$, we only need to control cycles and paths with $\beta$-steps
for $\beta\subsetneq\ag(\wbar)$ 
within the cluster $[w]_{\ag(\wbar)}$ for $w\in\wbar$.
For intersections between dual
hyperedges, Lemma~\ref{le:2acycProps} implies that every intersection
can be described by the unique set of agents $\ag(\wbar)$. This means, for
every $w\in\wbar$:
$$ [w]_\alpha \in \bigcap_{w\in\wbar} \brck{w}
   \quad \Leftrightarrow \quad
   \alpha \supseteq \ag(\wbar).
$$

The following lemma will be vital for many of the constructions to
come, as it allows us to control $\ag(\wbar)$
in $2$-acyclic frames.

\begin{lemma}\label{le:addAgent}\label{le:oneLessAgent}
In a $2$-acyclic~\ca frame for worlds $w,v$:
\begin{enumerate}
\item
  For every agent $a\notin \mathrm{agt}(w,v)$ and every
  $v'\in[v]_a\setminus\{v\}$:
  \[\ag(w,v')=\ag(w,v)\cup\{a\}.\]
\item
  For every agent $a\in \ag(w,v)$ there is at most
  one $v'\in[v]_a$ such that
  \[ \ag(w,v') = \ag(w,v)\setminus \{ a \}. \]
\end{enumerate}
\end{lemma}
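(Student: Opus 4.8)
The plan is to treat $\ag$ as a well-behaved ``support'' function on pairs of worlds and argue entirely by manipulating the minimality characterisation packaged in Lemma~\ref{le:2acycProps} and Definition~\ref{agtdefn}, namely that in a $2$-acyclic frame $[w]_\beta=[v]_\beta$ holds if and only if $\beta\supseteq\ag(w,v)$ (so that $\ag(w,v)=\emptyset$ exactly when $w=v$). I would first record two preliminary observations that both parts lean on. Observation~(a) is a triangle inequality $\ag(w,v')\subseteq\ag(w,v)\cup\ag(v,v')$: putting $\delta:=\ag(w,v)\cup\ag(v,v')$ we have $[w]_\delta=[v]_\delta$ and $[v]_\delta=[v']_\delta$ (each step uses $\delta$ containing the relevant $\ag$-value), hence $[w]_\delta=[v']_\delta$ and so $\delta\supseteq\ag(w,v')$. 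Observation~(b) is the reformulation of the hypothesis in part~(1): $v'\in[v]_a$ means $[v]_a=[v']_a$, i.e.\ $\ag(v,v')\subseteq\{a\}$, while $v'\neq v$ forces $\ag(v,v')\neq\emptyset$; hence $v'\in[v]_a\setminus\{v\}$ is equivalent to $\ag(v,v')=\{a\}$.

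For part~(1) I would write $\alpha_0:=\ag(w,v)$ (with $a\notin\alpha_0$) and $\gamma:=\ag(w,v')$. Observation~(a) applied with $\ag(v,v')=\{a\}$ gives the upper bound $\gamma\subseteq\alpha_0\cup\{a\}$ at once, so the task reduces to the reverse inclusion, which I split into ``$a\in\gamma$'' and ``$\alpha_0\subseteq\gamma$''. For $a\in\gamma$ I argue by contradiction: if $a\notin\gamma$ then $\gamma\subseteq\alpha_0$, so by monotonicity of cosets $[v']_\gamma=[w]_\gamma\subseteq[w]_{\alpha_0}=[v]_{\alpha_0}$, which places $v'$ in $[v]_{\alpha_0}$, i.e.\ $[v']_{\alpha_0}=[v]_{\alpha_0}$; minimality then yields $\alpha_0\supseteq\ag(v,v')=\{a\}$, contradicting $a\notin\alpha_0$. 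Once $a\in\gamma$ is secured, the second inclusion is immediate: since $\{a\}\subseteq\gamma$ we have $v\in[v]_a=[v']_a\subseteq[v']_\gamma=[w]_\gamma$, whence $[v]_\gamma=[w]_\gamma$ and therefore $\alpha_0=\ag(w,v)\subseteq\gamma$. Combining the two inclusions gives $\gamma=\alpha_0\cup\{a\}$.

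For part~(2), suppose $v',v''\in[v]_a$ both satisfy $\ag(w,v')=\ag(w,v'')=\alpha_0\setminus\{a\}$, where $\alpha_0=\ag(w,v)$ and $a\in\alpha_0$. Routing through $w$, they agree on the $(\alpha_0\setminus\{a\})$-class, namely $[v']_{\alpha_0\setminus\{a\}}=[w]_{\alpha_0\setminus\{a\}}=[v'']_{\alpha_0\setminus\{a\}}$, and lying in a common $a$-class they agree on the $\{a\}$-class. By minimality $\ag(v',v'')$ is contained both in $\alpha_0\setminus\{a\}$ and in $\{a\}$, hence in their intersection $\emptyset$; therefore $\ag(v',v'')=\emptyset$ and $v'=v''$. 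This disposes of part~(2) with no case analysis.

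The only genuinely delicate point is the step ``$a\in\gamma$'' in part~(1): it is the one place where the argument must exploit that distinct $\alpha_0$-classes are disjoint together with the minimality of $\ag(v,v')$, and it is ultimately $2$-acyclicity (entering through the well-definedness and minimality of $\ag$, i.e.\ through Lemma~\ref{le:2acycProps}) that makes this go through. Everything else is bookkeeping with the characterisation $[w]_\beta=[v]_\beta\Leftrightarrow\beta\supseteq\ag(w,v)$. Accordingly I would state observations~(a) and~(b) up front so that both items of the lemma read as short inclusion computations.
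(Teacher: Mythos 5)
Your proposal is correct, and it is worth noting how it relates to what the paper actually does. For part~(2) the paper argues by exhibiting the coset $2$-cycle $v',\beta,v'',a,v'$ directly from the definition (observing that $[v']_{\beta\cap\{a\}}\cap[v'']_{\{a\}\cap\beta}=[v']_\emptyset\cap[v'']_\emptyset=\emptyset$) and then invoking $2$-acyclicity; you instead route the same disjointness fact through the derived characterisation $[x]_\beta=[y]_\beta\Leftrightarrow\beta\supseteq\ag(x,y)$ of Lemma~\ref{le:2acycProps} to get $\ag(v',v'')\subseteq(\alpha_0\setminus\{a\})\cap\{a\}=\emptyset$. These are the same argument in different packaging: the paper works at the level of the raw coset-cycle definition, you work at the level of the minimal-connecting-set calculus it induces; your version is slightly slicker but presupposes that Lemma~\ref{le:2acycProps} has already been established from $2$-acyclicity. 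For part~(1) the paper gives no proof at all (it cites~\cite{Ca19}), so your self-contained derivation -- the triangle inequality $\ag(w,v')\subseteq\ag(w,v)\cup\ag(v,v')$ for the upper bound, then the two inclusions $a\in\gamma$ (by contradiction, using that $\gamma\subseteq\alpha_0$ would force $[v']_{\alpha_0}=[v]_{\alpha_0}$ and hence $a\in\alpha_0$) and $\alpha_0\subseteq\gamma$ (since $a\in\gamma$ puts $v$ into $[v']_\gamma=[w]_\gamma$) -- genuinely fills a gap the paper leaves to an external reference, and every step checks out against coset monotonicity and the minimality characterisation. The one point I would make explicit in a write-up is your observation~(b), that $v'\in[v]_a\setminus\{v\}$ is equivalent to $\ag(v,v')=\{a\}$, since both halves of part~(1) silently depend on it.
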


\begin{proof}
For~(1), the inclusion $\ag(w,v') \subset \ag(w,v)\cup\{a\}$ is
obvious; for the converse, observe that $v, a, v', (\ag(v',w) \cup
\ag(w,v))$ would be a $2$-cycle if $a \not\in \ag(w,v')$,
and that $w,\ag(w,v),v,\ag(w,v')$ would be a $2$-cycle
if $\ag(w,v) \not\subset \ag(w,v') = \ag(w,v') \cup \{ a\}$.

Similarly for~(2), let $a \in \alpha := \ag(w,v)$, 
$\beta:= \alpha\setminus \{ a \}$ and assume 
that $\ag(w,v') = \ag(w,v'') = \beta$ for two different worlds
$v',v''\in[v]_a\setminus\{v\}$. One checks that 
$v',\beta,v'',a,v'$ would form a 2-cycle.
\end{proof}

\subsubsection{Coset acyclicity and hypergraph acyclicity}
\label{sec:hypergraphs}

The dual hypergraph of a \ca frame or structure 
will play a crucial r\^ole in the Ehrenfeucht--Fra\"iss\'e\ 
arguments in Section~\ref{sec:characterisation}.
We here investigate the connections
between acyclicity of \ca frames and hypergraph acyclicity,
and between coset paths in \ca frames and chordless
paths in hypergraphs.

\begin{definition}
 A \emph{hypergraph} is a structure $\A=(A,S)$, $A$ its vertex set and
 $S\subseteq \mathcal{P}(A)$ its set of hyperedges.
\end{definition}

All hypergraphs to be considered here have finite width,
where \emph{width}  refers to the maximal size of the hyperedges,
$\mathrm{max}\{|s| \colon s \in S\}$. 
We denote by $S^\downarrow$ 
the closure of $S$ under passage to subsets, and 
correspondingly let $\A^\downarrow := (A, S^\downarrow)$.
A hypergraph $\A=(A,S)$ has as its \emph{induced substructures} 
(\emph{sub-hypergraphs})
the hypergraphs $\A \restr Q$ for $\emptyset \not= Q \subset A$ 
with vertex set $Q$ and hyperedges $s \cap Q$.

With a hypergraph $\A=(A,S)$ we associate its
\emph{Gaifman graph} $G(\A)=(A,G(S))$;
the undirected edge relation $G(S)$ 
of $G(\A)$ links $a\neq a'$ 
if $a,a'\in s$ for some $s\in S$.
Note that $G(\A)= G(\A^\downarrow)$.
A hypergraph is called \emph{connected} if its Gaifman graph is. 
An \emph{$n$-cycle} in a hypergraph is a cycle of length~$n$
in its Gaifman graph, and an \emph{$n$-path} is a path of
length~$n$ in its Gaifman graph. A \emph{chord} of 
an $n$-cycle or $n$-path is an edge between vertices
that are not next neighbours along the cycle or path.
The following definition of hypergraph acyclicity is the classical one
from~\cite{Berge}, also known as $\alpha$-acyclicity in~\cite{BeeriFaginetal};
$n$-acyclicity was introduced in~\cite{Otto12JACM}.

\begin{definition}
 A hypergraph $\A=(A,S)$ is \emph{acyclic} if it is \emph{conformal}
 and \emph{chordal}:
 \bre
  \item conformality requires that every clique in the Gaifman graph~$G(\A)$
	is contained in some hyperedge $s\in S$;
  \item chordality requires that every cycle in the Gaifman graph~$G(\A)$
        of length greater than~3 has a chord.
 \ere

 For $n\geq 3$, $\A=(A,S)$ is \emph{$n$-acyclic} if it is \emph{$n$-conformal}
 and \emph{$n$-chordal}:
 \bre
 \setcounter{enumi}{2}
  \item $n$-conformality requires that every clique in $G(\A)$
         up to size~$n$ is contained in some hyperedge $s\in S$;
  \item $n$-chordality requires that every cycle in $G(\A)$
        of length greater than~3 and up to~$n$ has a chord.
 \ere
\end{definition}

\begin{remark}[\cite{Otto12JACM}]
If a hypergraph $\A = (A,S)$ is $n$-acyclic, then every induced
substructure $\str{A}\restr Q$ for non-empty subsets $Q \subset A$  
of up to~$n$ vertices is acyclic.
\end{remark}

\begin{definition}
\label{treedecompdef}
A \emph{tree decomposition} $\mathcal{T}=(T,\delta)$
of a hypergraph $\str{A}= (A,S)$ consists of 
a tree $T$ (i.e.\ an acyclic connected graph, often with a
distinguished root node) together with a map 
$\delta\colon T\to S^\downarrow$ 
that associates subsets of hyperedges with every node $u$ 
of the tree such that 
\bre
\item
$S^\downarrow = \image(\delta)^\downarrow$,  
and
\item 
for every vertex $a\in A$, the set $\{v\in T : a\in\delta(v)\}$
is connected in~$T$.
\ere
A hypergraph~$\str{A}= (A,S)$ is \emph{tree decomposable} if it
admits a tree decomposition.%
\footnote{One can, for hypergraphs of finite width, equivalently require a map
$\delta\colon T \rightarrow S$ that is surjective onto the set of $\subset$-maximal 
members of $S$, with analogous connectivity
requirements, see~\cite{BeeriFaginetal}. While irrelevant for the
resulting notion of tree-decomposability, passage to $\A^\downarrow$ 
simplifies considerations involving induced sub-hypergraphs,
whose hyperedges are subsets of original hyperedges.} 
\end{definition}

A tree decomposition $\mathcal{T} = (T,\delta)$ organises the hyperedges of $\A$ or
$\A^\downarrow$ in \emph{bags} $\delta(u) \in S^\downarrow$ for $u \in T$,
so as to reflect the tree-like nature of their overlap pattern in $\A$.
A well-known result from classical hypergraph theory
(\cite{Berge},\cite{BeeriFaginetal})
is that a hypergraph is tree decomposable if,
and only if, it is acyclic. 
For finite hypergraphs $\A$ we may moreover bound
the depth of the underlying tree $T$ of a tree decomposition 
just in terms of the size of the vertex set.
Indeed, edges between nodes with 
identical bags can be contracted.
Then along a simple directed path 
from some node $u$ to some node $u'$ in $T$, 
any individual vertex $a \in A$ can
legitimise at most two distinct edges (by entering or exiting bags),
which bounds the length of the path by $2|A| - |\delta(u)|-|\delta(u')|$.
On the basis of further contractions one may also 
eliminate inclusion relationships between
bags at neighbouring nodes. This yields a bound
of $|A|$ for the depth of $T$,
even after attaching an extra root node to represent some designated 
member of $S^\downarrow$
(whose bag may be a subset of next neighbour bags). To
tree decompositions satisfying this latter constraint we 
we refer to as \emph{succinct tree decompositions}.

The connectivity constraint in the definition of tree decompositions, together with
the fact that all bags form cliques in the underlying Gaifman graph, 
yields the following simple but useful insight. 
As bags are Gaifman cliques a simple chordless path
in $G(\str{A})$ cannot visit the same bag twice. It follows that the
edges between next neighbours along such a path must all be
represented in bags that lie on the minimal connecting path 
between any two bags that represent the terminal nodes in the tree
decomposition. 

\begin{observation}
\label{treedecompobs}
The edges of chordless simple paths between elements of different
bags $\delta(u)$ and $\delta(u')$ of a tree decomposition
are represented in the bags along the shortest connecting path between
$u$ and $u'$ in the tree.
\end{observation}

\begin{definition}
\label{dualhypergraphdef}
Let $\Mf=(W,(R_\alpha)_{\alpha\in\tau})$ 
be a~\ca frame. Its \emph{dual hypergraph} is the vertex-coloured 
hypergraph 
$d(\Mf) := (d(W),S, (\Qalph_\alpha)_{\alpha\in\tau})$ where 
\begin{align*}\textstyle
d(W)&:= \dot{\bigcup}_{\alpha\in\tau} \Qalph_\alpha \;\;\mbox{ for }
      \Qalph_\alpha :=  W/R_\alpha, \\
    S &:= \{\brck{w} \subseteq d(W): w\in W \}.
 \end{align*}
\end{definition}

Note that $d(\Mf)$ has width $|\tau| = 2^{|\Gamma|}$, as 
$\brck{w} = \{ [w]_\alpha \colon \alpha \in \tau\}$. 
As $\Mf$ is connected, so is 
its dual $d(\Mf)$, and its diameter is
bounded by~$2$: 
any two elements $[w]_\alpha, [w']_\beta$ are linked
to the universal class $[w]_\Gamma = [w']_\Gamma = W \in d(W)$ 
by hyperedges  $\brck{w}$ and $\brck{w'}$.
Due to the uniform bound on the width, any $d(\Mf)$ that is 
$n$-conformal for $n \geq |\tau|$
must be outright conformal. 
The notions of acyclicity for \ca frames and hypergraph
acyclicity are directly connected by the following.

\begin{lemma}[\cite{Otto12JACM}]
\label{acycCayleyhyplem}
 For~$n\geq 3$, if $\Mf$ is an $n$-acyclic \ca frame,
 then $d(\Mf)$ is an $n$-acyclic hypergraph.
\end{lemma}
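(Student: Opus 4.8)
The plan is to translate the two defining conditions of hypergraph $n$-acyclicity — $n$-conformality and $n$-chordality — into statements about cosets in $\Mf$, and then to read off short coset cycles whenever one of these conditions fails. The first step is to describe the Gaifman graph $G(d(\Mf))$ concretely. Its vertices are the cosets $[w]_\alpha$, and two distinct such cosets are joined by a Gaifman edge precisely when they occur together in some dual hyperedge $\brck{u}$, which happens exactly when they share a world, i.e.\ when they have nonempty intersection. Under this dictionary a clique of cosets $c_1,\dots,c_k$ is contained in a common hyperedge $\brck{u}$ if and only if $u\in\bigcap_j c_j$, so $n$-conformality amounts to a Helly-type property: pairwise intersecting cosets should have a common world. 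Likewise a cycle in $G(d(\Mf))$ is a cyclic family of cosets whose consecutive members intersect, and a chord is an intersection between two non-consecutive members.

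The second ingredient is the structure theory for $2$-acyclic frames, which is available since $n\geq 3$ entails $2$-acyclicity. By Lemma~\ref{cutChar} and Lemma~\ref{le:2acycProps}, every nonempty intersection of cosets is again a coset, of the intersected type; in particular, for cosets $c_{i-1}=[w_i]_{\alpha_{i-1}}$ and $c_i=[w_i]_{\alpha_i}$ sharing a world $w_i$, one has $[w_i]_{\alpha_{i-1}\cap\alpha_i}=c_{i-1}\cap c_i$. This identity is exactly what links the emptiness clause in the definition of a coset cycle to triple intersections of cosets: the condition $[w_i]_{\alpha_{i-1}\cap\alpha_i}\cap[w_{i+1}]_{\alpha_i\cap\alpha_{i+1}}=\emptyset$ becomes $c_{i-1}\cap c_i\cap c_{i+1}=\emptyset$.

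With this in hand, $n$-chordality follows cleanly by contraposition. Suppose $G(d(\Mf))$ has a chordless cycle $c_0,\dots,c_{m-1}$ with $3<m\leq n$. Choosing a shared world $w_i\in c_{i-1}\cap c_i$ for each $i$ and letting $\alpha_i$ be the agent set with $c_i$ an $\alpha_i$-coset, I obtain $(w_i,w_{i+1})\in R_{\alpha_i}$ because $w_i,w_{i+1}\in c_i$; and chordlessness gives $c_{i-1}\cap c_{i+1}=\emptyset$, hence $c_{i-1}\cap c_i\cap c_{i+1}=\emptyset$, which is precisely the required emptiness clause. This exhibits a coset cycle of length $m\leq n$, contradicting $n$-acyclicity of $\Mf$. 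For $n$-conformality I would argue along the same lines, passing to a clique $c_1,\dots,c_k$ with $3\leq k\leq n$ that fails to lie in any hyperedge and is minimal with this property, so that $\bigcap_j c_j=\emptyset$ while every proper subfamily has a common world. The case $k=3$ reproduces a coset $3$-cycle by the same computation as above — here the empty triple intersection is the globally empty one — which $3$-acyclicity forbids.

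The main obstacle is the higher-arity conformality, i.e.\ the case $k\geq 4$. Here minimality forces every proper subfamily, and in particular every three consecutive cosets along any cyclic ordering, to have nonempty intersection, so the \emph{local} emptiness clauses of a coset cycle cannot be met by cycling naively through $c_1,\dots,c_k$ or through their pairwise intersections; the emptiness surfaces only at the full $k$-fold level. Overcoming this requires the finer coset-cycle combinatorics of~\cite{Otto12JACM} at the group level (cf.\ Definitions~\ref{def:OttoCosetCycle} and~\ref{def:OttoNacyclic}), where the homogeneity of the Cayley graph lets one close a $k$-fold Helly obstruction into a genuine coset cycle of length at most $n$ by translating cosets within the group. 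I would import this as the substantive step; once it is in place, $n$-conformality follows by the same contrapositive, and together with $n$-chordality this yields that $d(\Mf)$ is $n$-acyclic.
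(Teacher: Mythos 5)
The paper itself offers no proof of this lemma --- it is imported verbatim from~\cite{Otto12JACM} --- so your proposal has to stand on its own. Your dictionary between the Gaifman graph of $d(\Mf)$ and coset intersections is right, and your chordality argument is correct and complete: for a chordless Gaifman cycle $c_0,\dots,c_{m-1}$ with $3<m\leq n$, picking $w_i\in c_{i-1}\cap c_i$ and using Lemma~\ref{cutChar} to rewrite $[w_i]_{\alpha_{i-1}\cap\alpha_i}\cap[w_{i+1}]_{\alpha_i\cap\alpha_{i+1}}$ as $c_{i-1}\cap c_i\cap c_{i+1}\subseteq c_{i-1}\cap c_{i+1}=\emptyset$ does produce a forbidden coset cycle. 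The same goes for conformality at $k=3$.

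The genuine gap is conformality for $k\geq 4$, which you correctly identify as the hard case but then wave at, invoking ``homogeneity of the Cayley graph'' and translation of cosets within the group. That is not what is needed, and as stated it is not an argument: you never say what cyclic tuple witnesses the coset cycle. The fix is elementary and uses only tools already in the paper. Take the minimal bad clique $c_1,\dots,c_k$ ($c_j$ an $\alpha_j$-coset) with $\bigcap_j c_j=\emptyset$ but every proper subfamily intersecting, and choose witnesses not in pairwise intersections but in the $(k-1)$-fold intersections: $w_i\in\bigcap_{j\neq i}c_j$. Then $w_i,w_{i+1}\in\bigcap_{j\neq i,i+1}c_j$, which by Lemma~\ref{le:2acycProps}(1) equals $[w_i]_{\beta_i}$ for $\beta_i:=\bigcap_{j\neq i,i+1}\alpha_j$, so $(w_i,w_{i+1})\in R_{\beta_i}$. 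Moreover $\beta_{i-1}\cap\beta_i=\bigcap_{j\neq i}\alpha_j$, and again by Lemma~\ref{le:2acycProps}(1), $[w_i]_{\beta_{i-1}\cap\beta_i}=\bigcap_{j\neq i}c_j$ and $[w_{i+1}]_{\beta_i\cap\beta_{i+1}}=\bigcap_{j\neq i+1}c_j$; their intersection is $\bigcap_j c_j=\emptyset$. So $(w_i,\beta_i)_{i\in\Z_k}$ is a coset cycle of length $k\leq n$, contradicting $n$-acyclicity of $\Mf$. (This also subsumes your $k=3$ case.) In short: the missing idea is the choice of representatives at the level of $(k-1)$-fold intersections, not any group-theoretic translation; without it, your conformality half does not go through for $k\geq 4$.
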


When playing the \EF game in Section~\ref{sec:characterisation} 
to prove the upgrading theorem over \ca
structures we use their dual hypergraphs as auxiliary
structures to describe a
winning strategy.
For that we keep track of an invariant
involving a substructure that contains the pebbled
worlds. This invariant has an image in the dual hypergraph
that we use to maintain and expand the invariant properly in each round.
The key notion to describe this dual image is the \emph{convex
$m$-closure}, which was defined in~\cite{Otto12JACM} for
a similar purpose.

\begin{definition} 
\label{def:closure}
Let~$\A = (A,S)$ be a hypergraph.\index{convex closure}
\bre
\item 
A subset $Q\subseteq A$ is \emph{$m$-closed} if every chordless
path of length up to~$m$ between distinct 
vertices $a,a'\in Q$ is 
contained in~$Q$.
\item 
For $m\in\N$, the \emph{convex $m$-closure} of a subset~$P \subseteq A$ 
is the minimal $m$-closed subset that contains~$P$:
$\mathrm{cl}_m(P):=\bigcap	
\{Q\supseteq P \colon Q \subseteq A \text{ $m$-closed }\}$.
\ere 
\end{definition}

As a direct consequence of Observation~\ref{treedecompobs} we obtain the following.

\begin{observation}
  \label{initialtreeobs}
  Let $\mathcal{T} = (T,\delta)$ be a tree decomposition of an
  $m$-closed hypergraph $\str{A}=(A,S)$. Then the induced sub-hypergraph of
  $\str{A}$ on the elements covered by $\mathcal{T}\restr U$ for 
  a connected subset $U$ of $T$, $\str{A} \restr \bigcup \{ \delta(u)
  \colon u \in U \}$, is $m$-closed. 
\end{observation}

In the analysis of the \EF game it will be important to maintain,
as part of an invariant, convex closures of the representations 
of the pebbled configurations, which means that such convex closures
need to be updated to incorporate newly added 
elements. The following lemma shows that, 
in sufficiently acyclic hypergraphs, such extensions of
convex closures are well-behaved and can be controlled.
In the statement, distance $d(P,q) = \mbox{min}\{ d(p,q) \colon p \in P \}$ 
between a set and a vertex refers to distance in the Gaifman graph,
and $N^1(P) = \bigcup\{ N^1(p) \colon p
\in P \}$ is the $1$-neighbourhood of the set $P$ in the
Gaifman graph. Both the following lemmas are from~\cite{Otto12JACM}. 

\begin{lemma}[\cite{Otto12JACM}]
\label{le:closureAdd}
Let $m > 1$, $\A=(A,S)$ be a hypergraph that is sufficiently acyclic,
$Q\subseteq A$ $m$-closed and $a\in A$ some vertex with $1\leq d(Q,a)\leq m$.
Let $\hat{Q}:= \mathrm{cl}_m(Q\cup \{a\})$ and consider the region
$D:= Q\cap N^1(\hat{Q}\setminus Q)$ in which this extended 
closure attaches to $Q$. Then
$\hat{Q}\setminus Q$ is connected, and
$D$ separates~$\hat{Q}\setminus Q$ from~$Q\setminus D$
(in the graph-theoretic sense in $G(\A)$), whence 
\[
\hat{Q} = Q\cup \mathrm{cl}_m(D\cup \{a\}).
\]
If~$Q$ is even $(2m+1)$-closed, then
   $D=Q\cap N^1(\hat{Q}\setminus Q)$ is a clique.
\end{lemma}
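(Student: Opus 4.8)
The plan is to prove Lemma~\ref{le:closureAdd} by a careful analysis of chordless paths in a sufficiently acyclic hypergraph. The key intuition is that when we extend the $m$-closed set $Q$ by a single vertex $a$ at Gaifman-distance between~$1$ and~$m$, the only genuinely new material $\hat Q\setminus Q$ must hang off $Q$ in a localised, connected, tree-like manner, precisely because high $n$-acyclicity forbids the kind of long induced cycles that would let $\hat Q\setminus Q$ reattach to $Q$ in two separated places. First I would unwind the definition of $\mathrm{cl}_m$: a vertex enters $\hat Q$ only if it lies on a chordless path of length at most~$m$ between two vertices of $Q\cup\{a\}$. Since $Q$ is already $m$-closed, any such path that witnesses a new vertex cannot have both endpoints in~$Q$; hence $a$ must be one endpoint of every witnessing chordless path, and every vertex of $\hat Q\setminus Q$ lies on a chordless path of length $\leq m$ from~$a$ to some vertex of~$Q$.

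Building on this, I would establish connectedness of $\hat Q\setminus Q$: every new vertex is joined to~$a$ along the initial segment of its witnessing path, and I would argue by an exchange/rerouting argument that these initial segments themselves stay inside $\hat Q\setminus Q$ up until they hit~$Q$, so that $a$ acts as a hub connecting all of $\hat Q\setminus Q$. Next comes the separation claim: I would show that $D = Q\cap N^1(\hat Q\setminus Q)$ separates $\hat Q\setminus Q$ from $Q\setminus D$ in the Gaifman graph. The argument is that any Gaifman edge leaving $\hat Q\setminus Q$ into $Q$ lands, by definition, in~$D$; so a path from $\hat Q\setminus Q$ into $Q\setminus D$ would have to pass through~$D$, giving separation essentially by definition of the $1$-neighbourhood, with the real content being that no \emph{chordless} path can sneak around~$D$ in a sufficiently acyclic hypergraph. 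From separation together with connectedness I would then derive the decomposition $\hat Q = Q\cup\mathrm{cl}_m(D\cup\{a\})$: the inclusion $\supseteq$ is immediate since $D\cup\{a\}\subseteq\hat Q$ and $\hat Q$ is $m$-closed; for $\subseteq$ one checks that every new vertex, lying on a chordless $a$-to-$Q$ path whose terminal $Q$-vertex must lie in~$D$ (as the first point of contact with~$Q$), is already captured by $\mathrm{cl}_m(D\cup\{a\})$.

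The final and hardest part is the clique claim: if $Q$ is $(2m+1)$-closed, then $D$ is a clique in the Gaifman graph. Here I would take two vertices $d_1,d_2\in D$, each adjacent to some vertex of $\hat Q\setminus Q$, and stitch together a short path $d_1 \to (\text{through }\hat Q\setminus Q\text{ via the hub }a) \to d_2$ of length bounded by roughly $2m+1$; using $n$-chordality of the sufficiently acyclic hypergraph this path, closed up with the putative missing edge $d_1 d_2$, must have a chord, and by a minimality/shortest-path argument the only available chord forces $d_1$ and $d_2$ to be adjacent. I expect this clique argument to be the main obstacle, since it requires bounding the length of the connecting path precisely by the acyclicity threshold $2m+1$ and ruling out, via $n$-conformality and $n$-chordality simultaneously, any chord that would fall inside $\hat Q\setminus Q$ rather than between $d_1$ and $d_2$. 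Throughout I would lean on Lemma~\ref{acycCayleyhyplem} and the standard hypergraph-acyclicity machinery quoted from~\cite{Otto12JACM}, quantifying ``sufficiently acyclic'' as $n$-acyclic for $n$ a fixed linear function of~$m$ (something like $n \geq 2m+2$) so that all the cycles manipulated in the chordality steps fall below the acyclicity threshold.
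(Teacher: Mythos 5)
A preliminary remark: the paper gives no proof of this lemma at all; it is imported verbatim from~\cite{Otto12JACM} (``The following lemmas are from~\cite{Otto12JACM}''), so there is no in-paper argument to compare against. Your overall skeleton --- every vertex of $\hat{Q}\setminus Q$ witnessed by a chordless path of length $\leq m$ with endpoint $a$ (since $m$-closedness of $Q$ kills paths with both endpoints in $Q$), connectedness of $\hat{Q}\setminus Q$ through the hub $a$, separation by $D$ essentially by definition of the $1$-neighbourhood, and the decomposition $\hat{Q}=Q\cup\mathrm{cl}_m(D\cup\{a\})$ via truncating each witnessing path at its first contact with $Q$ --- is the right shape and matches the argument in~\cite{Otto12JACM}. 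But there is a gap you gloss over at the very start: $\mathrm{cl}_m$ is an \emph{iterated} closure, so a vertex can enter $\hat{Q}$ via a chordless path between two vertices that were themselves only added in earlier iterations, not between two vertices of $Q\cup\{a\}$. Your one-step description of $\hat{Q}\setminus Q$ is precisely the nontrivial claim that sufficient acyclicity has to deliver, and the proposal asserts it rather than proving it; this cascade control is where the real work of the lemma lives.

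The second, more clearly broken step is the clique claim. You propose to close up the stitched $d_1$--$a$--$d_2$ path ``with the putative missing edge $d_1d_2$'' and apply $n$-chordality. That is not a valid move: chordality speaks about actual cycles in the Gaifman graph, and you cannot form a cycle using an edge whose existence is exactly what is in question. Moreover, the hypothesis that does the work here is not chordality or conformality of $\A$ but the \emph{$(2m+1)$-closedness of $Q$}, which your argument never invokes. The correct finish: each $d_j\in D$ is adjacent to some $x_j\in\hat{Q}\setminus Q$, and $x_j$ sits on a chordless $a$-to-$Q$ path of length $\leq m$, so one gets a walk from $d_1$ to $d_2$ of length $\leq 2m$ whose interior lies entirely in $\hat{Q}\setminus Q$; extracting a vertex-minimal (hence chordless) $d_1$--$d_2$ path inside that vertex set gives a chordless path of length $\leq 2m+1$ between two vertices of $Q$ with all interior vertices outside $Q$. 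If $d_1d_2$ were not an edge this path would have length at least $2$ and would have to be contained in $Q$ by $(2m+1)$-closedness --- a contradiction. This is why the clique conclusion is stated only under the strengthened closure hypothesis, a connection your proposal does not make.
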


As $\hat{Q} = Q \cup \mathrm{cl}_{m}(D\cup\{a\})$
for some clique~$D\subseteq Q$, 
it will be important to bound the
size of the extension $\mathrm{cl}_{m}(D\cup\{a\})$. 
This is the extension that occurs, in the dual image as part of the invariant,
in a single round, and a size bound will be critical 
for bounding the required level of $\ell$-bisimulation
that is necessary to win the game. As pointed out above, 
the dual hypergraphs are of uniform width $|\tau|$, which we regard as
constant; we therefore seek functions $f_{m}(k)$ that bound the size
of $m$-closures of up to~$k$ elements in 
hypergraphs of fixed width~$|\tau|$, 
provided they are sufficiently acyclic.

\begin{lemma}[\cite{Otto12JACM}]
\label{le:closureSize}
For fixed width, there are functions~$f_m(k)$ such that in 
hypergraphs~\A\ of that width that are sufficiently 
acyclic, 
$|\mathrm{cl}_m(P) | \leq f_m(k)$,
for all $P \subseteq A$ of size $|P| \leq k$.
\end{lemma}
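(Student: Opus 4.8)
The plan is to derive $f_m(k)$ from the single-vertex growth estimate of Lemma~\ref{le:closureAdd}, run as the engine of an incremental construction of the closure. Throughout, write $w := |\tau|$ for the (fixed) width. The first observation is that in a sufficiently acyclic hypergraph conformality holds, so that every clique is contained in a hyperedge and hence has at most $w$ vertices; this is exactly what keeps the ``attaching region'' $D$ in Lemma~\ref{le:closureAdd} under control, since whenever that lemma applies $D$ is a clique and so $|D| \leq w$, whence $|D \cup \{a\}| \leq w+1$. Since $\mathrm{cl}_m(P) \subseteq \mathrm{cl}_{2m+1}(P)$ and the sought $f_m$ are monotone in $m$, it suffices to bound the $(2m+1)$-closure, so we may assume the running set is $(2m+1)$-closed and freely invoke the clique conclusion of Lemma~\ref{le:closureAdd}.

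I would compute $\mathrm{cl}_{2m+1}(P)$ as the limit of a saturation process that maintains a $(2m+1)$-closed set $Q$ and enlarges it one vertex at a time. Start from a single point of $P$ (trivially closed), and repeatedly pick a vertex $a$ that still needs to be added --- either a not-yet-included point of $P$, or an interior vertex of a chordless path of length $\leq 2m+1$ between two vertices already in $Q$ --- choosing $a$ at minimal distance so that $1 \leq d(Q,a) \leq 2m+1$. By Lemma~\ref{le:closureAdd} each such step replaces $Q$ by $\hat{Q} = Q \cup \mathrm{cl}_{2m+1}(D \cup \{a\})$, where the new block $\hat{Q}\setminus Q$ is connected, is attached to $Q$ only through the clique $D$, and is itself the closure of the $(\leq w+1)$-element set $D \cup \{a\}$. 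Thus each step contributes at most $g := \max\{\,|\mathrm{cl}_{2m+1}(D \cup \{a\})| : D \text{ a clique},\, |D| \leq w\,\}$ new vertices, and the whole closure is assembled as a tree of such blocks glued along cliques.

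To turn this into a finite bound I would set up a double induction. The outer parameter is $m$, with base case $m=1$ where $\mathrm{cl}_1(P) = P$ (chordless paths of length $\leq 1$ add no interior vertices), so $f_1(k) = k$. For the inductive bound on $f_m$, the number of blocks is controlled by $k$ and $m$: each point of $P$ and each ``closure witness'' triggers at most one block, and by $m$-acyclicity the blocks form an acyclic, tree-decomposable arrangement of bounded depth, so their number is bounded by a function of $k$ and $m$ alone; multiplying by the per-block quantity $g$ then yields $f_m(k)$. The quantity $g$ is itself the closure of a set of size $\leq w+1$, and the circularity is broken by observing that such a block is anchored at the clique $D$ and lies within distance $2m+1$ of it, so its own saturation tree has strictly fewer free seeds to branch from; this lets the induction on the number of seeds close.

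The step I expect to be the main obstacle is exactly this last one: controlling the \emph{cascade}, namely the fact that adding interior vertices of chordless paths can itself create new short chordless connections and hence new closure obligations, so that a single original point of $P$ may spawn a whole subtree of blocks. The essential content is that \emph{sufficient} acyclicity forces this cascade to terminate with a size bound independent of the ambient hypergraph: Lemma~\ref{le:closureAdd} guarantees that each block attaches through a bounded clique and is the closure of a bounded set, while $m$-acyclicity (via tree-decomposability of small induced subhypergraphs) prevents the blocks from accumulating in cycles, so that both the branching and the depth of the block-tree are bounded in terms of $m$, $k$ and $w$ only.
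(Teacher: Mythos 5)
The paper does not prove this lemma at all --- it is imported verbatim from~\cite{Otto12JACM} --- so there is no in-paper argument to compare against; I can only assess your sketch on its own terms, and it has a genuine gap. Your reduction via Lemma~\ref{le:closureAdd} is the right engine: adding one vertex $a$ to a $(2m+1)$-closed $Q$ costs at most $|\mathrm{cl}_{2m+1}(D\cup\{a\})|$ new vertices with $D$ a clique, conformality bounds $|D|$ by the width $w$, and since each such step restores closedness the number of steps is at most $k$. This correctly concentrates the entire content of the lemma in the single quantity $g=\max|\mathrm{cl}_{2m+1}(D\cup\{a\})|$. But that is exactly where your argument stops being a proof: $g$ is itself the closure of a set of size up to $w+1$, and your proposed way out --- that this block ``has strictly fewer free seeds to branch from'' --- is unsubstantiated and, as stated, false in general, since $w+1$ need not be smaller than $k$ (for $k\leq w$ it is larger), and the vertices added inside the block can themselves spawn new short chordless connections, i.e.\ new seeds. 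Neither your outer induction on $m$ (which moves from $m$ to $2m+1$, in the wrong direction) nor the ``induction on the number of seeds'' has a decreasing parameter, so the recursion $g \leq 1+(w+1)\cdot g$ never closes.

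What is missing is a non-recursive base case: a direct bound on $|\mathrm{cl}_m(D\cup\{a\})|$ for a clique $D$ and a single vertex $a$ at distance at most $m$ from it. This is where sufficient acyclicity has to do real work beyond what Lemma~\ref{le:closureAdd} records. One has to show that all chordless paths of length up to $m$ from $a$ to $D$ are confined to a ``tube'' of length at most $m$ and width at most $w$ --- because two essentially disjoint such paths would close up into a chordless cycle of length up to roughly $2m$, contradicting $2m$-chordality, so they must share a chain of separating cliques --- and, crucially, that this tube is itself already $m$-closed, so the cascade you flag as the main obstacle terminates after one round with $|\mathrm{cl}_m(D\cup\{a\})|$ bounded by something like $(m+1)\cdot w$. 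Without an argument of this kind (which is the actual technical content of the corresponding lemma in~\cite{Otto12JACM}), your block-tree picture describes the shape of the closure but does not bound its size.
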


\subsection{Structure theory for n-acyclic \ca structures}
\label{sec:structureTheory}

Section~\ref{sec:cosetStructres} covered the first part of
the upgrading argument and especially the availability of suitable coverings. 
This section provides the tools for the second part, 
viz.\ for showing that $\sim^\ell$-equivalence of 
two such suitable \ca structures implies $\equiv_q$-equivalence, 
by playing the first-order \EF game. 
The central notion of this subsection is \emph{freeness},
a special property of sufficiently rich and acyclic \ca structures. 
Essentially, freeness governs a single round in the \EF game, in the
sense that it allows \PII to find a suitable response 
to any move of \PI. 
As with richness and acyclicity, there
are different degrees of freeness. The main result of this section is
the \emph{freeness theorem}, which 
states that sufficiently rich and 
acyclic \ca structures are sufficiently free.
Beside a formal introduction and discussion of freeness,
this section introduces \emph{coset paths}. They
generalise graph-theoretic paths in the same way that
coset cycles generalise graph-theoretic cycles and 
play an important part in proving the freeness theorem
in Section~\ref{sec:freenessTheorem}.
Most of the auxiliary theoretical
results on coset paths come from~\cite{Ca19}.

Many of the definitions and notions that we will
introduce from now on only make sense in 2-acyclic
\ca frames, because they are based on
the notion of the unique minimal connecting set of
agents $\ag(\wbar)$ defined in the previous section.
As every \ca structure has a 2-acyclic bisimilar covering, 
the following  is justified.

\begin{proviso}
We assume for the remainder of this section that all \ca
frames are at least 2-acyclic.
\end{proviso}

\subsubsection{Freeness}\label{sec:freeness}

Consider playing the $i$-th round of an \EF game on
\ca structures~\Mf and~\Nf with worlds
$w_1,\dots,w_{i-1}\in W$ and $v_1,\dots,v_{i-1}\in V$
already pebbled. If player~\PI chooses the world
$w_i\in W$ in his next move, how does player~\PII respond to
this?
As usual,~\PII has to maintain a partial isomorphism 
between the pebbled worlds. In order to cope with player~\PI's 
challenges in future rounds, she also needs to 
match short distances between worlds exactly and to match 
long distances with long distances. Since we play on \ca structures, she
has to respect these distances on multiple scales.

The special property of \ca structures that allows us to make
suitable choices is called \emph{freeness} and is the topic
of this section. Recall from Section~\ref{sec:hypergraphs},
Definition~\ref{dualhypergraphdef}, 
the dual hypergraph associated with a Cayley frame and the notion 
of Gaifman distance in hypergraphs discussed there.
The following notation is useful towards 
the formal definition of freeness:
for $t,X,Y\subseteq A$ in a hypergraph~$\str{A}=(A,S)$, 
we denote as $d_t(X,Y)$ 
the distance between $X\setminus t$ and $Y\setminus t$
in the induced sub-hypergraph
$\str{A}\setminus t := \str{A}\upharpoonright (A\setminus t)$.
For a set of worlds $\zbar\subseteq W$, we write
$\brck{\zbar}$ for the set $\{\brck{z} : z\in\zbar \}$
of associated hyperedges.
A \emph{pointed set (of worlds)} is a pair $(\zbar,z_0)$,
where $\zbar$ is a set of worlds and $z_0\in\zbar$.\index{pointed set}

\begin{definition}
\label{def:freeness}\index{freeness}
Let~\Mf be a $2$-acyclic \ca structure and $m,k\in\N$.
For~$v\in W$ and a pointed set of worlds $(\zbar,z_0)$ we say
that~$(\zbar,z_0)$ and~$v$ are \emph{$m$-free}, 
denoted as $(\zbar,z_0)\bot_m v$, if
    $d_t(\bigcup \brck{\zbar},\brck{v})>m$
   in $d(\Mf)$, 
   where $t=\brck{v}\cap\brck{z_0}$.
\\    
We say that
    $\Mf$ is \emph{$(m,k)$-free} if, for all $v \in W$,
    all pointed sets $(\zbar,z_0)$ with $|\zbar|\leq k$,
    and all sets of agents~$\gamma\supseteq\ag(v,z_0)$, 
    there is some $v^* \sim v$ such that
$\ag(v^*,z_0) =\gamma$  
and
$(\zbar,z_0)\bot_m v^*$.
\end{definition}

The main result of this section (Theorem \ref{thm:freeness}) states that sufficiently acyclic and rich \ca structures are $(m,k)$-free.
Note that the concept of freeness refers to 
distance in the dual hypergraph of a \ca structure (cf.~Lemma~\ref{le:dualDistance}). 
We shall use dual hypergraphs and freeness criteria
in Section~\ref{sec:characterisation} 
to describe a winning strategy for player~\PII in the \EF game. 
There~$\zbar$ will comprise
not just the worlds pebbled so far,
but a certain small substructure spanned by the pebbled worlds,
and $z_0$ plays the r\^ole of the world in~$\zbar$ that is,
in some sense, closest to a newly pebbled world~$v$.
Freeness was introduced in~\cite{Otto12JACM} 
to define a winning strategy for an \EF game played on $n$-acyclic
hypergraphs, in order to show a characterisation theorem
for guarded logic. 
We adapt the idea for our purposes to use it over 
\ca structures and their dual hypergraphs. 
Essentially, freeness is applied in
the same way as in~\cite{Otto12JACM}, but the
proof that sufficiently acyclic and 
rich
\ca structures are $(m,k)$-free is new here.
Definition~\ref{def:freeness} speaks about
worlds in the \ca structure and about distances in
the Gaifman graph of the dual hypergraph.
Our proof of the freeness theorem finds
the desired world~$v^*$, which is far enough away
from~$\zbar$ in terms of the dual hypergraph, 
through constructions on the original
\ca structure.

\medskip
A world~$v$ and a pointed set $(\zbar,z_0)$
are $m$-free if the distance
between $\brck{v}\setminus t$ and $(\bigcup \brck{\zbar})\setminus t$
in $d(\Mf)\setminus t$ is strictly larger than~$m$,
for $t=\brck{v}\cap\brck{z_0}$.
In other words, a minimal
path between $\brck{v}\setminus t$ and $(\bigcup \brck{\zbar})\setminus t$
in $d(\Mf)\setminus t$ must be strictly longer than~$m$.
In fact we are only interested in those paths between~$\brck{v}$
and~$\bigcup \brck{\zbar}$ that do \emph{not} go through~$t$: 
the paths that go through~$t$ are all the trivial
paths between~$\brck{v}$ and~$\bigcup \brck{\zbar}$, and 
the goal is to find some $v^*\sim v$ such that all
the non-trivial paths are long.
The set~$t$ is a set of equivalence classes
in~\Mf. By definition, it 
contains exactly those classes that contain both~$v$ and~$z_0$, so that
some class $[v]_\beta$ is an element of~$t$ if and only if
$(z_0,v)\in R^\Mf_\beta$, which by $2$-acyclicity is equivalently expressed as
$$
 t=\{[v]_\beta : \beta\supseteq\ag(z_0,v)\}
    =\{ [v]_\beta : [v]_\beta \supseteq [v]_{\ag(z_0,v)} \}.
$$

So the classes in~$t$ represent the coset paths of length~1
from~$z_0$ to~$v$. These are the trivial paths, which 
we cannot and need not avoid.
But in order to win the \EF game we need to be able to ensure
that a response in a given round of the game can
match long paths with long paths. 

In order to find a suitable~$v^*$, we will deal with each
world $z\in\zbar$ in turn.
First, we find a
copy~$v_0$ of~$v$ such that $d_t(\brck{v_0},\brck{z_0})>m$,
then we find a copy~$v_1$ such that $d_t(\brck{v_1},\brck{z})>m$,
for another world $z\in\zbar$, while maintaining $d_t(\brck{v_1},\brck{z_0})>m$,
and so forth. The last of these copies will be~$v^*$.
Take note of the fact that we always need to avoid
the same set $t=\brck{v}\cap\brck{z_0}$ (rather than $\brck{v}\cap\brck{z}$)
when we want to increase the distance between~$\brck{v}$ and~$\brck{z}$.
This complicates things on a technical level.
Note that $d_t(\brck{v},\brck{z})>1$ implies 
$\brck{v}\cap\brck{z}\subseteq t$, which means that
all the classes that directly connect~$v$ and~$z$
will also be avoided.

\medskip
To find some suitable world~$v^*$ in the \ca structure~\Mf, we 
consider paths in~$d(\Mf)\setminus t$ 
that need to be avoided as paths in~\Mf, 
as certain \emph{coset paths} to be introduced below. 
We close this section with a useful alternative 
description of the set~$t$ that needs to be avoided.
Motivated by freeness, $t$ was defined as
$\brck{v}\cap\brck{z_0}$, i.e.\ in terms of~$v$ and~$z_0$.
Since we assume~\Mf to be 2-acyclic,
$t$ can also be described in terms of~$v$
and the set $\gamma := \ag(z_0,v)$ as $t=\{[v]_\beta :
\beta\supseteq\gamma\}$, as we saw above. This motivates
the following definition.

\begin{definition}
 For a 2-acyclic \ca frame~\Mf with  dual hypergraph~$d(\Mf)$,
 we define the following mapping: 
 \[
\barr{@{}rcl@{}}
 \rho^\Mf\colon W\times\tau&\longrightarrow& \mathcal{P}(d(W))
\\
(v,\gamma) &\longmapsto& \{ [v]_\beta : \beta \supseteq \gamma \}
\earr
 \]
 \end{definition}

If it is clear from the context, we drop the superscript~\Mf and just 
write~$\rho$ instead of $\rho^\Mf$. 
Note that the set $t$ to be avoided will typically be 
$t = \rho(v, \ag(z_0,v))$.

The following lemma characterises the relationship
of the sets $\brck{v}\cap\brck{z_0}$ and
$\brck{v}\cap\brck{z}$ in~$d(\Mf)$ in terms of
$\ag(z_0,v)$ and $\ag(v,z)$.
We can observe the usual duality in the transition from
\ca structures to their dual hypergraphs.

\begin{lemma}\label{le:tProp}
 Let~\Mf be a 2-acyclic \ca frame, $v,z$ two worlds
 and~$\gamma$ a set of agents, then
 \[
  \brck{z}\cap\brck{v} \subseteq \rho(v,\gamma) \quad \Leftrightarrow \quad
  \gamma \subseteq \ag(z,v).
 \]
\end{lemma}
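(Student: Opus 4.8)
The plan is to reduce the claimed equivalence to the single structural identity
\[
\brck{z}\cap\brck{v} = \rho(v,\ag(z,v))
\]
and then to compare the two sets of cosets $\rho(v,\ag(z,v))$ and $\rho(v,\gamma)$ directly. The identity itself is exactly the instance of the displayed consequence of Lemma~\ref{le:2acycProps} obtained by taking $\wbar=\{z,v\}$ and reading off the cosets through $v$: for every $\alpha\in\tau$ one has $[v]_\alpha\in\brck{z}\cap\brck{v}$ iff $\alpha\supseteq\ag(z,v)$, which is precisely the description of $\rho(v,\ag(z,v))$. With this in hand the lemma becomes the purely combinatorial claim that $\rho(v,\ag(z,v))\subseteq\rho(v,\gamma)$ iff $\gamma\subseteq\ag(z,v)$, and I would in fact prove the general fact that $\rho(v,\delta)\subseteq\rho(v,\gamma)$ iff $\gamma\subseteq\delta$, for arbitrary $\delta,\gamma\in\tau$, and then specialise $\delta:=\ag(z,v)$.

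The easy implication is from right to left, which is just antitonicity of $\rho(v,\cdot)$ in its second argument: if $\gamma\subseteq\delta$ then every $\beta\supseteq\delta$ also satisfies $\beta\supseteq\gamma$, so $\rho(v,\delta)\subseteq\rho(v,\gamma)$. This direction uses nothing beyond the definitions.

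For the converse I would exploit $2$-acyclicity through the cut characterisation of Lemma~\ref{cutChar}, together with the fact that in the \ca structures under consideration no agent is redundant, i.e.\ $[v]_{\{a\}}\neq\{v\}$ for every world $v$ and every agent $a$ (each $a$ carries a non-trivial involutory generator, so $\G_{\{a\}}\neq\{1\}$). The key step I would isolate is a no-redundancy statement: if $[v]_\delta=[v]_\beta$ and $\delta\subseteq\beta$, then $\delta=\beta$. To see this, suppose $a\in\beta\setminus\delta$; then $[v]_{\{a\}}\subseteq[v]_\beta=[v]_\delta$, while Lemma~\ref{cutChar} gives $[v]_{\{a\}}\cap[v]_\delta=[v]_{\{a\}\cap\delta}=[v]_\emptyset=\{v\}$, forcing $[v]_{\{a\}}=\{v\}$, contrary to non-redundancy. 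Granting this, assume $\rho(v,\delta)\subseteq\rho(v,\gamma)$. Since $[v]_\delta\in\rho(v,\delta)$, there is some $\beta\supseteq\gamma$ with $[v]_\beta=[v]_\delta$; Lemma~\ref{cutChar} yields $[v]_{\beta\cap\delta}=[v]_\delta$, and no-redundancy applied to $\beta\cap\delta\subseteq\delta$ gives $\delta\subseteq\beta$, whence a second application to $\delta\subseteq\beta$ with $[v]_\beta=[v]_\delta$ forces $\beta=\delta$. Then $\gamma\subseteq\beta=\delta$, as required.

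I expect the main obstacle to be precisely this converse direction, and specifically the need to rule out redundant agents. Without the assumption that every $a$-class is non-trivial the equivalence genuinely fails: a trivial agent $a$ yields $\rho(v,\{a\})=\brck{v}$, so that $\brck{v}\cap\brck{v}\subseteq\rho(v,\{a\})$ holds while $\{a\}\not\subseteq\ag(v,v)=\emptyset$. I would therefore make explicit that the standing convention on Cayley structures --- non-trivial involutory generators for each agent --- is exactly what secures the no-redundancy step, and that this is where $2$-acyclicity does the real work, through the cut identity $[v]_\alpha\cap[v]_\beta=[v]_{\alpha\cap\beta}$.
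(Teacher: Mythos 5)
Your argument is correct, and since the paper states Lemma~\ref{le:tProp} without any proof, there is no in-text argument to compare it against; the two ingredients you use --- the identity $\brck{z}\cap\brck{v}=\rho(v,\ag(z,v))$, which is exactly the displayed consequence of Lemma~\ref{le:2acycProps}, and the cut characterisation of Lemma~\ref{cutChar} --- are precisely the tools the paper sets up for this purpose, so your route is surely the intended one. The one substantive point is your no-redundancy step, and you are right to isolate it: the left-to-right implication genuinely fails if some agent $a$ has $\G_a=\{1\}$, since then $\rho(v,\delta)=\rho(v,\delta\cup\{a\})\supseteq\brck{z}\cap\brck{v}$ for $\delta=\ag(z,v)$ although $\delta\cup\{a\}\nsubseteq\ag(z,v)$. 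So the lemma tacitly relies on every $E_a$ being non-empty. This holds for all Cayley structures the paper actually constructs --- in the covering of Lemma~\ref{CayleyCKlemproper} each $E_a$ contains a tagged copy of every (in particular every reflexive) $R_a$-edge, and each generator is a non-trivial involution --- but it is not literally forced by Definition~\ref{Cayleystrucdef}, so flagging it is a genuine improvement rather than pedantry. Your derivation of no-redundancy from $[v]_{\{a\}}\cap[v]_\delta=[v]_{\emptyset}=\{v\}$, and the two applications of it that first give $\delta\subseteq\beta$ and then pin down $\beta=\delta$, are both sound.
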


\subsubsection{Coset paths}\label{sec:cosetPaths}

A special case of the coset paths to be considered here 
are the non-$t$ coset paths for some $t=\rho(v,\gamma)$. Those  
are the coset paths that correspond to the
chordless paths that avoid~$t$ in the dual hypergraph. 
Based on non-$t$ coset paths we present a multi-scale measure of distance
in \ca graphs and results from~\cite{Ca19} 
that tie it to the measure of distance that we use
in hypergraphs.

\begin{definition}
\label{def:cosetPath}
 Let~\Mf be a \ca frame.
 A \emph{coset path of length~$\ell\geq 1$}
 is a labelled path
 $w_1,\alpha_1,w_2,\alpha_2,\dots,\alpha_\ell,w_{\ell+1}$
\footnote{A labelled path is a path in the graph-theoretic sense with
 explicit account for the edge labels; in our present notation this means that 
$(w_i,w_{i+1}) \in R_{\alpha_i}$.}
 such that, for $1\leq i \leq \ell$,
 \[
  [w_i]_{\alpha_{i-1}\cap \alpha_i} \cap
  [w_{i+1}]_{\alpha_{i}\cap\alpha_{i+1}}=\emptyset,
 \]
where we trivially supplement the path 
with 
$\alpha_0=\alpha_{\ell+1}:=\emptyset$.

A coset path $w_1,\alpha_1,\dots,\alpha_{\ell},w_{\ell+1}$
of length $\ell\geq 2$ with $\alpha :=\ag(w_1,w_\ellp)$ is 
\begin{itemize}
\item[--]
\emph{non-trivial} if 
$[w_i]_{\alpha_i} \nsupseteq [w_1]_\alpha$ 
\item[--]
an \emph{inner} path if
$[w_i]_{\alpha_i}\subsetneq [w_1]_\alpha$
\end{itemize}   
for all $1\leq i\leq \ell$.
It is 
\begin{itemize}
\item[--]
\emph{non-$t$} 
for some $t=\rho(w_\ellp,\gamma)$ with
 $\gamma\in \tau$, if 
$[w_i]_{\alpha_i} \nsupseteq  [w_\ellp]_\gamma$ 
\end{itemize}
for all $1\leq i \leq \ell$.
 A non-$t$ (or non-trivial) coset path from~$w$ to~$v \not=w$ is
 \emph{minimal} if there is no shorter 
non-$t$ (or non-trivial) coset path from~$w$ to~$v$.
\end{definition}

\begin{remark}
 A non-trivial coset path from~$w$ to~$v$ is the same as
 a non-$t$ coset path for $t=\rho(v,\ag(w,v))$. 
A coset path 
$w_1,\alpha_1,\ldots, \alpha_\ell, w_{\ell+1}$ of length
 $\ell \geq 2$ is an inner path if 
$\ag(w_1,w_{\ell+1}) = \bigcup_i \alpha_i$, and any such 
inner coset path is non-trivial.
\end{remark}

\begin{definition}
\label{shortcosetpathdef}
 Let~\Mf be a~\ca frame that is $2n$-acyclic. We call
 a coset path \emph{short} if its length is at most~$n$.
\end{definition}

Defining a measure of distance in~$\Mf$ is a non-trivial
matter because of its highly intricate edge pattern.
Every \ca frame is a single clique with respect to~$R_\Gamma$,
the accessibility relation induced by the set~$\Gamma$ of
all agents. 
So the diameter of a Cayley frame is at most~$1$, which 
trivialises the usual notion of distance
and renders locality techniques seemingly useless.

However, in 2-acyclic structures we can define a sensible
notion of distance that is based on non-$t$ coset paths.
Essentially, a non-$t$ coset path between~$w$ and~$v$
excludes all trivial connections between~$w$ and~$v$
and only looks at the scale that we are interested
in, which is set by~$t$.

\begin{definition}\label{def:tDistance}
Let~\Mf be a $2$-acyclic \ca frame, $w\neq v$ two worlds,
$\gamma\subseteq\Gamma$ and $t=\rho(v,\gamma)$.
The \emph{$t$-distance} $d_t(w,v)$ between~$w$ and~$v$ is defined as
the length of a minimal coset path from~$w$ to~$v$
that avoids~$t$ ($\infty$ if there is no such path).
For a set of worlds~$\zbar$, the \emph{$t$-distance}
$d_t(\zbar,v)$ between~$\zbar \not=\emptyset$ and~$v$ is defined as
$$d_t(\zbar,v) := \underset{z\in\zbar}{\mathrm{min}}~d_t(z,v).$$
\end{definition}

\begin{remark}
 Depending on~$t$, $t$-distance may allow for distance~1:
 $d_t(w,v)=1$ if, and only if, $[v]_{\ag(w,v)}\notin t$.
 However, we are usually interested in cases
 where $\gamma\subseteq\ag(w,v)$, which implies
 $[v]_{\ag(w,v)}\in t$, for $t=\rho(v,\gamma)$
 and thus  $d_t(w,v) \geq 2$.
\end{remark}

\medskip
As suggested by our notation, 
$d_t(w,v)$ and $d_t(\brck{w},\brck{v})$ are closely related.
It is shown in~\cite{Ca19} that the two measures
correspond for sufficiently acyclic \ca structures.
In a sufficiently acyclic \ca structure,
a non-$t$ coset path 
$$ v_1,\alpha_1,v_2,\dots,v_\ell,\alpha_\ell,v_{\ell+1} $$
of length~$\ell\geq 1$ induces a chordless path of length $\ell+1$
$$ [v_1]_\emptyset,\brck{v_1},[v_2]_{\alpha_1},\dots,[v_{\ell+1}]_{\alpha_\ell},\brck{v_{\ell+1}},[v_{\ell+1}]_\emptyset $$
that does not visit~$t$;
conversely, 
a chordless path of length $\ell+1\geq 2$
in the dual hypergraph of a 2-acyclic \ca structure 
that does not visit~$t$,
induces a non-$t$ coset path in the \ca structure
in the same way. It follows that
$d_t(w,v)+1=d_t([w]_\emptyset,[v]_\emptyset)$, 
which implies $d_t(w,v)=d_t(\brck{w},\brck{v})+1$.

This also means that coset paths in \ca structures can be
analysed and matched in terms of chordless paths in the dual
hypergraphs.
Furthermore, it is shown that in sufficiently acyclic
\ca structures the $t$-distance between
two worlds is large if all \emph{inner} non-$t$ coset paths are long.
This result is of crucial importance because
it reduces the global 
distance between~$w$ and~$v$ in~\Mf to a 
phenomenon on the scale of~$\ag(w,v)$. 
This local scale
involves just the substructure of $\Mf$ on $[w]_{\ag(w,v)} \subset W$ w.r.t.\
sets of agents $\alpha \strictsubset \ag(w,v)$.

\begin{lemma}[\cite{Ca19}]
\label{le:dualDistance}
 Let $\ell\geq 1$,~\Mf  a sufficiently acyclic \ca frame,
 $w,v$ two worlds, $\gamma\subseteq\Gamma$
 and $t=\rho(v,\gamma)$.
 If there is no inner non-$t$ coset path
 of length~$\ell$ from $w$ to~$v$,
 then 
$d_t(w,v)>\ell$ and $d_t(\brck{w},\brck{v}) > \ellm$.
\end{lemma}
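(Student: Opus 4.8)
The plan is to obtain the second inequality from the first via the established correspondence between the two distance measures, so that the combinatorial work concentrates entirely on the bound $d_t(w,v) > \ell$. For sufficiently acyclic \ca frames it is shown in~\cite{Ca19} that Gaifman distance in the dual hypergraph and $t$-distance in~$\Mf$ are tied together by $d_t(\brck{w},\brck{v}) = d_t(w,v) - 1$: a chordless path in $d(\Mf)\setminus t$ from $\brck{w}\setminus t$ to $\brck{v}\setminus t$ runs through exactly the intermediate dual hyperedges visited by a minimal non-$t$ coset path from~$w$ to~$v$, the length dropping by one because the two endpoint hyperedges are counted as vertices rather than as steps. Granting this, $d_t(w,v) > \ell$ immediately yields $d_t(\brck{w},\brck{v}) > \ellm$, and it remains to establish the first inequality.

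For that inequality I would argue by contraposition, the content being that in a sufficiently acyclic frame a minimal non-$t$ coset path is already \emph{inner}, so that $d_t(w,v)$ coincides with the minimal length of an inner non-$t$ coset path from~$w$ to~$v$. Write $\alpha := \ag(w,v)$ and let $C := [w]_\alpha = [v]_\alpha$ be the cluster that $\alpha$ connects; by Definition~\ref{agtdefn} this is the smallest coset containing both endpoints. Fix a minimal non-$t$ coset path $P = w_1,\alpha_1,\dots,\alpha_L,w_{L+1}$ with $w_1 = w$, $w_{L+1} = v$ and $L = d_t(w,v)$. Assuming $L \le \ell$, the goal is to exhibit an inner non-$t$ coset path from~$w$ to~$v$ of length at most~$\ell$, contradicting the hypothesis that no (short) inner non-$t$ coset path exists, i.e.\ showing $[w_i]_{\alpha_i} \strictsubset [w]_\alpha$ may be assumed for all~$i$.

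The tool is a gate projection onto~$C$. Using the cut characterisation of $2$-acyclicity (Lemma~\ref{cutChar}) together with the intersection and $\ag$-calculus of Lemmas~\ref{le:2acycProps} and~\ref{le:addAgent}, I would first establish that every world~$u$ has a unique gate $u^\ast \in C$ with $\ag(u,c) = \ag(u,u^\ast) \cup \ag(u^\ast,c)$ for all $c\in C$; the induced retraction $\pi\colon u \mapsto u^\ast$ fixes~$C$ pointwise, so $\pi(w)=w$ and $\pi(v)=v$. Applying~$\pi$ to~$P$, the steps whose classes already sit below~$C$ survive while the steps that leave~$C$ collapse to trivial steps, and the cut characterisation guarantees that the coset-disjointness conditions of Definition~\ref{def:cosetPath} are preserved, so the collapsed sequence is again a coset path. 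It runs from~$w$ to~$v$ inside~$C$, hence is inner, and it remains non-$t$ because projecting a class into~$C$ only shrinks it and therefore cannot produce a class $\supseteq [v]_\gamma$; its length is at most~$L\le\ell$.

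The hard part will be the bookkeeping that keeps the projected path simultaneously inner, valid, and of controlled length. Each maximal excursion of~$P$ out of~$C$, paired with its gate shadow inside~$C$, closes up into a coset cycle; sufficient $n$-acyclicity for~$n$ exceeding the lengths in play is exactly what forbids the short cycles that would otherwise break the disjointness conditions after collapsing, or would let the projection be strictly shorter than the minimal non-$t$ path. This is where ``sufficiently acyclic'' is indispensable: it forces the minimal non-$t$ coset path to be inner and of length equal to the minimal inner non-$t$ coset path, so that $L\le\ell$ delivers the forbidden inner non-$t$ coset path. Reconciling the literal ``length~$\ell$'' of the hypothesis with the minimal length produced by the projection, and controlling the induced coset cycles precisely, is the delicate step; the remaining checks — validity of the collapsed path, preservation of the non-$t$ condition, and the distance correspondence — are routine given the coset-path structure theory of~\cite{Ca19,Otto12JACM}.
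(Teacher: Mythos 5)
The paper offers no proof of Lemma~\ref{le:dualDistance}: it is imported from~\cite{Ca19} (the surrounding text says ``it is shown in~\cite{Ca19} that \ldots''), so there is no in-paper argument to measure your proposal against. Judged on its own terms, your proposal identifies a plausible two-stage architecture --- derive $d_t(\brck{w},\brck{v})>\ellm$ from $d_t(w,v)>\ell$ via the off-by-one correspondence between Gaifman distance in $d(\Mf)\setminus t$ and $t$-distance in \Mf, and reduce the latter to showing that a minimal non-$t$ coset path is already inner --- but the second stage, which is the entire content of the lemma, is not actually carried out.

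Concretely, three things are missing. First, the existence and uniqueness of a ``gate'' $u^\ast\in[w]_{\ag(w,v)}$ with $\ag(u,c)=\ag(u,u^\ast)\cup\ag(u^\ast,c)$ for all $c$ in the coset is itself a substantial structural theorem about coset geometry; nothing in Lemmas~\ref{cutChar}, \ref{le:2acycProps} or~\ref{le:addAgent} delivers it, since those lemmas control intersections of cosets through common points rather than projections of arbitrary worlds onto a coset, and $2$-acyclicity alone cannot suffice. Second, even granting the retraction, the claims that the collapsed sequence still satisfies the disjointness conditions of Definition~\ref{def:cosetPath}, that each projected class is a \emph{subset} of the original one (which is what your preservation of the non-$t$ condition silently requires --- a class properly contained in $[w]_{\ag(w,v)}$ can still contain $[v]_\gamma$), and that the result is inner and no longer than $L$, are exactly the coset-cycle bookkeeping you defer to ``sufficient acyclicity''; as you concede, this is ``the hard part'', so the proposal restates the difficulty rather than resolving it. Third, the hypothesis forbids inner non-$t$ paths of length exactly~$\ell$, while your contrapositive produces one of length at most~$\ell$; bridging that gap is flagged but not done, and naive padding of a shorter path by repeating a label violates the disjointness condition $[w_i]_{\alpha_{i-1}\cap\alpha_i}\cap[w_{i+1}]_{\alpha_i\cap\alpha_{i+1}}=\emptyset$. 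In short: the skeleton is reasonable and consistent with how the lemma is used in Section~\ref{sec:freenessTheorem}, but the proof is not there.
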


Hence, given $w,v$ and~$t$, finding some
$v^*\sim v$ such that $d_t(\brck{w},\brck{v^*}) > \ellm$
reduces to finding some $v^*\sim v$ such that
$d_t(w,v^*) > \ell$, which reduces to the local
matter of eliminating, in some sense, all the
short inner non-$t$ coset paths.
In the following section, we prove the freeness
theorem. Lemma~\ref{le:dualDistance} from~\cite{Ca19}
is the cornerstone for this undertaking.

\subsubsection{The freeness theorem}\label{sec:freenessTheorem}

Let $m\geq 1$, \Mf be a \ca structure,~$v$ a world, $(\zbar,z_0)$
a pointed set with $v\notin \zbar$ and $\gamma=\ag(z_0,v)$.
The challenge is to find a world $v^*\sim v$ with $\ag(z_0,v^*) = \gamma$
such that~$v^*$ and $(\zbar,z_0)$
are $m$-free, assuming~\Mf is sufficiently acyclic
and sufficiently rich.
The  necessary levels of richness and acyclicity
are determined by~$m$ and~$|\zbar|$.
Hence, we need a suitable~$v^*$ such that
$d_t(\bigcup\brck{\zbar},\brck{v^*})>m$,
for $t=\brck{z_0}\cap\brck{v^*}$;
by Lemma~\ref{le:dualDistance} it suffices to have
a~$v^*$ such that $d_t(\zbar,v^*)>m+1$.
Since we need such~$v^*$ for arbitrary~$m$,
we will show how to obtain a~$v^*$ such that
$d_t(\zbar,v^*)>m$ in order to make the following
more readable.
Proving the freeness theorem involves two steps.

\paragraph*{The first step.} 
The first step is to find some $v_1\sim v$ with $\ag(z_0,v_1)=\gamma$
such that $d_t(\zbar,v_1)>1$, for
$t=\brck{z_0}\cap\brck{v}=\rho(v,\gamma)=\rho(v_1,\gamma)$.
The choice of~$t$ immediately implies $d_t(z_0,v)>1$,
but we need to look for an appropriate bisimilar copy of~$v$
in~$[v]_\gamma$ to increase the $t$-distance to the other
worlds of~$\zbar$.
The condition $d_t(\zbar,v_1)>1$ can be equivalently rephrased
as $\ag(z_0,v_1)\subseteq\ag(z,v_1)$, for all $z\in\zbar$.
Lemma~\ref{le:2acycProps} guarantees the uniqueness of the minimal
connecting sets of agents $\ag(\cdot,\cdot)$ in 2-acyclic~\Mf 
and thereby implies a triangle inequality
with respect to these:
$$\ag(v,z)\subseteq\ag(v,z_0)\cup\ag(z_0,z).$$
If we find a bisimilar copy~$v_1$
of~$v$ with $\ag(z_0,v_1)=\gamma$ such that
$$\ag(v_1,z)=\ag(v_1,z_0)\cup\ag(z_0,z),$$
then $\ag(z_0,v_1)\subseteq\ag(z,v_1)$.
In other words, 
in the passage from~$v$ to~$v_1$ 
we need to increase the distance,
with regard to connecting agents, 
from~$z$ without changing the distance 
from~$z_0$.
Lemma~\ref{le:triangle} shows that this can be done
in 2-acyclic, rich structures for multiple $z\in\zbar$
simultaneously.

We use the following argument in the proofs of
the Lemmas~\ref{le:triangle} and~\ref{le:multiRunning}
to find suitable bisimilar copies in rich structures.

\begin{lemma}\label{le:goodWorld}
Let $v$ be a world and $(\zbar,z_0)$ a finite pointed
set in a \ca structure~\Mf, with
$\ag(v,z_0)\subset\ag(v,z)$ for all $z\in\zbar$.
Let $a\in\ag(v,z_0)$; if~\Mf is 2-acyclic and sufficiently rich,
then there is some $v'\in[v]_a\setminus\{v\}$ with
$v'\sim v$ such that, for all $z\in\zbar$,
$$\ag(v',z)=\ag(v,z).$$
\end{lemma}

\begin{proof}
Let $B_z=\{u\in[v]_a: a\notin\ag(z,u)\}$ for $z\in\zbar$.
By Lemma~\ref{le:addAgent}(2), $|B_z|\leq 1$ for all $z\in\zbar$.
Let $B=\bigcup_{z\in\zbar}B_z$; then $|B|\leq|\zbar|$.
Since~\Mf is sufficiently rich, there is some
$v'\in[v]_a\setminus (B\cup\{v\})$ with $v'\sim v$.
It follows $\ag(v',z)=\ag(v,z)$, for all $z\in\zbar$,
from the definition of~$B$.
\end{proof}
 
In the statement of Lemma~\ref{le:triangle},
the worlds of~$\zbar$ are the ones that
have already been taken care of
and~$u$ is the world to be processed next.

\begin{lemma}\label{le:triangle}
 Let  $v,u$ be worlds and $(\zbar,z_0)$ a finite pointed set of worlds
 in a \ca structure ~\Mf, 
 with $\ag(v,z)=\ag(v,z_0)\cup\ag(z_0,z)$ for all $z\in\zbar$.
 If~\Mf is $2$-acyclic and sufficiently rich,
 then there is a world~$v^*\sim v$ with
 $\ag(v^*,z)=\ag(v,z)$ for all $z\in\zbar$,
 such that also 
 \[ \ag(v^*,u)=\ag(v^*,z_0)\cup\ag(z_0,u). \]
\end{lemma}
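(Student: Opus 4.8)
The plan is to reduce the statement to enlarging the connecting agent set with~$u$ by the agents of $\gamma := \ag(v,z_0)$, and then to realise that enlargement through a bounded sequence of single-agent moves that exploit richness. Write $\gamma := \ag(v,z_0)$ and $\delta := \ag(z_0,u)$. Since $z_0\in\zbar$, any $v^*$ meeting the requirement $\ag(v^*,z)=\ag(v,z)$ for all $z\in\zbar$ automatically satisfies $\ag(v^*,z_0)=\gamma$, so the target identity becomes $\ag(v^*,u)=\gamma\cup\delta$. By the triangle inequality supplied by Lemma~\ref{le:2acycProps} one always has $\ag(v^*,u)\subseteq\gamma\cup\delta$; reading the same inequality as $\delta=\ag(z_0,u)\subseteq\ag(z_0,v)\cup\ag(v,u)=\gamma\cup\ag(v,u)$ yields $\delta\setminus\gamma\subseteq\ag(v,u)$. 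Hence $\ag(v,u)\cup\gamma=\gamma\cup\delta$, and it suffices to produce $v^*\sim v$ with $\ag(v^*,z)=\ag(v,z)$ for all $z\in\zbar$ and with $\gamma\subseteq\ag(v^*,u)$; the only agents that may be missing are those in $\gamma\setminus\ag(v,u)$.

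I would enumerate $\gamma\setminus\ag(v,u)=\{a_1,\dots,a_r\}$ and build a chain $v=v_{(0)},v_{(1)},\dots,v_{(r)}=:v^*$, where $v_{(j)}$ arises from $v_{(j-1)}$ by a move along a single $a_j$-edge, maintaining the invariant that $v_{(j)}\sim v$, that $\ag(v_{(j)},z)=\ag(v,z)$ for all $z\in\zbar$, and that $\ag(v_{(j)},u)=\ag(v,u)\cup\{a_1,\dots,a_j\}$. As every $a_j\in\gamma$, all moves stay inside $[v]_\gamma$. For the step I pick $v_{(j)}\in[v_{(j-1)}]_{a_j}$ of the same bisimulation type as~$v$ (available by richness, which places at least $k$ copies of that type in the $a_j$-class) and distinct from $v_{(j-1)}$. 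Because $a_j\notin\ag(v_{(j-1)},u)$, Lemma~\ref{le:addAgent}(1) gives $\ag(v_{(j)},u)=\ag(v_{(j-1)},u)\cup\{a_j\}$, so the $u$-component grows as desired while the already-present $\delta\setminus\gamma$ is retained. For each $z\in\zbar$ we have $a_j\in\gamma\subseteq\ag(v_{(j-1)},z)$, so a move along an $a_j$-edge can only leave $\ag(\cdot,z)$ unchanged or delete~$a_j$, and by Lemma~\ref{le:addAgent}(2) there is \emph{at most one} world in $[v_{(j-1)}]_{a_j}$ that deletes it, one such potentially for each~$z$.

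Thus the worlds to avoid in the $a_j$-class are $v_{(j-1)}$ together with at most $|\zbar|$ distinguished ``bad'' worlds, one per $z\in\zbar$: at most $|\zbar|+1$ in all. Provided $\Mf$ is $k$-rich for $k\geq|\zbar|+2$, the $\geq k$ copies of the bisimulation type of~$v$ in that class cannot all be excluded, so a legitimate $v_{(j)}$ exists; since $r\leq|\gamma|\leq|\Gamma|$ the construction terminates after boundedly many steps, and $v^*:=v_{(r)}$ has $\ag(v^*,u)=\ag(v,u)\cup\gamma=\gamma\cup\delta$ together with the required preservation on~$\zbar$. I expect the main obstacle to be exactly this bookkeeping: confirming that one $a_j$-edge move realises branch~(1) of Lemma~\ref{le:addAgent} with respect to~$u$ while realising the harmless branch of~(2) with respect to \emph{every} $z\in\zbar$ simultaneously, and that richness strictly outnumbers the worlds that must be dodged. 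Everything else is a routine combination of the triangle inequality with the single-edge agent calculus of Lemma~\ref{le:addAgent}.
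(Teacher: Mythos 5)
Your proposal is correct and follows essentially the same route as the paper's proof: reduce via the $2$-acyclicity triangle inequalities to showing that the missing agents all lie in $\gamma=\ag(v,z_0)$ (your $\delta\setminus\gamma\subseteq\ag(v,u)$ is the paper's case analysis showing $(\alpha_1\cup\alpha_2)\setminus\alpha_3\subseteq\alpha_1$), then adjoin them one at a time by single $a$-edge moves, using Lemma~\ref{le:addAgent}(1) to grow $\ag(\cdot,u)$ and Lemma~\ref{le:addAgent}(2) plus richness to dodge the at most one bad world per $z\in\zbar$ so that $\ag(\cdot,z)$ is preserved. Your version merely makes the counting ($|\zbar|+1$ exclusions against $k$-richness) more explicit than the paper does.
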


\begin{proof}
Put $\alpha_1:= \ag(v,z_0)$, $\alpha_2:= \ag(z_0,u)$ and $\alpha_3:= \ag(u,v)$.
By Lemma~\ref{le:2acycProps}, $2$-acyclicity implies 
$\alpha_i\subseteq \alpha_j\cup \alpha_k$ whenever
$\{i,j,k\}=\{1,2,3\}$.
We show that if $\alpha_3\subsetneq\alpha_1\cup \alpha_2$,
then for every agent $a\in(\alpha_1\cup \alpha_2)\setminus \alpha_3$
there is a world $v'\in[v]_a$ with $v'\sim v$ such that
\bre
 \item[--]
 $\ag(u,v') = \alpha_3 \cup \{a\}$;
 \item[--]
 $\ag(v',z_0)=\alpha_1$, and $\ag(v',z)=\ag(v,z)$, for all $z\in\zbar$;
 \item[--]
  $\alpha_1\subseteq \alpha_2\cup \ag(u,v')$,
  $\alpha_2\subseteq \alpha_1\cup \ag(u,v')$,
  $\ag(u,v')\subseteq \alpha_1\cup \alpha_2$.
\ere
Since $(\alpha_1\cup \alpha_2)\setminus \alpha_3$ is finite,
applying this argument a finite number of times leads to a suitable
world~$v^*$ with, in particular, $(\alpha_1\cup \alpha_2)\setminus \ag(u,v^*)=\emptyset$.
Let $a\in(\alpha_1\cup \alpha_2)\setminus \alpha_3$.
Then $a\in\alpha_1$ because
if we assume $a\notin\alpha_1$, it follows
\[
 a\notin \alpha_1 \; \xRightarrow[]{a\in \alpha_1\cup \alpha_2} \;
 a\notin \alpha_1,  a \in \alpha_2 \; 
\xRightarrow[]{\alpha_2\subseteq \alpha_1\cup \alpha_3} \;
 a\in \alpha_3.
\]
Since~\Mf is sufficiently rich and 2-acyclic
and $a\in\alpha_1\subseteq\ag(v,z)$, for all $z\in\zbar$,
Lemma~\ref{le:goodWorld} gives us a world 
$v'\in[v]_a\setminus\{v\}$ with $v'\sim v$ 
and $\ag(v',z)=\ag(v,z)$, for all $z\in\zbar$.
Set $\alpha_1':= \ag(v',z_0)$, $\alpha_2':= \ag(z_0,u)$ and $\alpha_3':= \ag(u,v')$.
Lemma~\ref{le:addAgent} implies
$\alpha_3' = \alpha_3 \cup \{a\}$ because $a\notin\alpha_3$
and~\Mf is 2-acyclic, $a\in\alpha_1$ implies $\alpha_1'=\alpha_1$,
and 2-acyclicity
gives us $\alpha_i'\subseteq \alpha_j'\cup \alpha_k'$ whenever
$\{i,j,k\}=\{1,2,3\}$.
\end{proof}

\paragraph*{The second step.}
The second step is the more difficult one.
We have to establish $d_t(\zbar,v^*)>m$,
while maintaining $\ag(v^*,z_0)=\gamma$.
By Lemma~\ref{le:dualDistance} this means that we need
to eliminate short inner non-$t$ coset paths
between~$v$ and the worlds in~$\zbar$ by moving to
bisimilar copies of~$v$ within $[v]_\gamma$.

There are many possible bisimilar copies of~$v$ to choose from.
The key is to find a suitable $a\in\gamma$ such that an $a$-step 
to a bisimilar copy of~$v$ in~$[v]_a$ brings us closer to~$v^\ast$. 
We define the set of ``right'' agents in~$\gamma$ by describing
the ``wrong'' agents, i.e.\ the direction
one has to take if one wants to move on a short
path from~$v$ towards~$\zbar$.
If we can do that, we just move in 
any other direction.

Again, we resort to a result from~\cite{Ca19}.
There it is shown that the direction one has
to take if one wants to move from~$v$ to~$z$
on a short non-$t$ coset path in a sufficiently
acyclic structure is unique in the following
sense:
there is a maximal 
non-empty set~$\alpha_0$ such that if
$v,\alpha,\dots,z$ is a short non-$t$ coset
path, then $\alpha_0\subseteq\alpha$
(cf.~\cite{Ca19}~Definition~4.10). 
We denote this set as
$$\sh_t(v,z).$$
It is shown in~\cite{Ca19} that this set
exists and is unique if there is a short
non-$t$ coset path from~$v$ to~$z$.
It is our goal to use $\sh_t(v,z)$
to find a suitable bisimilar copy~$v^\ast$ of~$v$
such that the $t$-distance between~$v^\ast$
and~$z$ increases.
This means that we must take a different direction,
i.e.\ some $a\notin\sh_t(v,z)$,
and move to a bisimilar copy in $[v]_a$.
The idea is to repeat this procedure 
with different suitable agents until we reach a copy
of~$v$ that has a sufficiently large $t$-distance
to~$z$.

The agent $a\notin\sh_t(v,z)$ can
be chosen to be in~$\gamma$: if $v,\sh_t(v,z),\dots,z$
is a short coset path (cf.\ Definition~\ref{shortcosetpathdef}) 
that avoids~$t$ (recall that $t=\rho(v,\gamma) = \{ [v]_\beta \colon
\beta \supseteq \gamma \}$), 
then $[v]_\gamma\nsubseteq[v]_{\sh_t(v,z)}$ implying
$\gamma\nsubseteq\sh_t(v,z)$. The case $\gamma\subseteq\ag(v,z)$
is of particular interest in the proof of the freeness theorem.

\begin{remark}[\cite{Ca19}]\label{rem:inGamma}
 Let~\Mf be a 2-acyclic \ca structure, $v,z\in\Mf$
 and~$\gamma\subseteq\ag(v,z)$ a set of agents.
 Then, for $t=\rho(v,\gamma)$,
$\gamma\nsubseteq\sh_t(v,z)$.
\end{remark}

Similar to the set~$\ag(v,z)$ in 2-acyclic
structures, $\sh_t(v,z)$ behaves in a controlled
manner in sufficiently acyclic structures.

\begin{lemma}[\cite{Ca19}]\label{le:stepAwayT}
 Let $m\in\N$,
 \Mf  a \ca frame,
 $z,v$ two worlds,
 $\gamma\subseteq\ag(v,z)$ and
 $t=\rho(v,\gamma)$.
 Assume~\Mf is $(2m+1)$-acyclic,
$d_t(z,v)\leq m$, and that there are $a\notin\sh_t(v,z)$ and
$v'\in[v]_a\setminus\{v\}$
 such that $d_t(v',z)\leq m$.
 Then $a\in\sh_t(v',z)$.
\end{lemma}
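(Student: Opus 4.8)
The plan is to read the statement through the lens of $t$-distance. Since $\sh_t(v',z)$ is the common initial label of all minimal non-$t$ coset paths from $v'$ to $z$, proving $a\in\sh_t(v',z)$ amounts to showing that every such minimal path begins with a label containing~$a$ -- intuitively, that the shortest way back from the ``off-path'' world $v'$ to $z$ must first undo the $a$-step to~$v$. I would argue by contradiction: suppose some minimal non-$t$ coset path $Q\colon v'=u_1,\beta_1,u_2,\dots,u_{k+1}=z$ of length $k=d_t(v',z)\le m$ has $a\notin\beta_1$.

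Next I would produce a competing route from $v'$ to $z$ that \emph{does} use $a$ first. Since $a\notin\sh_t(v,z)$, there is a minimal non-$t$ coset path $P\colon v=p_1,\delta_1,p_2,\dots,p_{j+1}=z$ of length $j=d_t(v,z)\le m$ with $a\notin\delta_1$. Prepending the single $a$-edge from $v'$ to $v$ yields $S\colon v',\{a\},v,\delta_1,\dots,z$. The coset-path condition at the new junction~$v$ reduces, because $\{a\}\cap\delta_1=\emptyset$, to $v\notin[p_2]_{\delta_1\cap\delta_2}$, which is exactly the condition $P$ already satisfies; and $S$ still avoids $t$ provided $[v]_{\{a\}}\notin t$, i.e.\ $\gamma\nsubseteq\{a\}$. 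The only degenerate case $\gamma=\{a\}$ I would treat separately, directly from $t=\rho(v,\gamma)$. Thus $S$ is a short non-$t$ route from $v'$ to $z$ whose first label contains~$a$, while $Q$'s does not, so the two routes diverge at~$v'$.

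Now I would close these two routes into a coset cycle and invoke acyclicity. Concatenating $S$ with the reverse of $Q$ gives a closed walk through $v'$ and $z$ of length $(j+1)+k\le 2m+1$. Trimming so that the two routes meet only at $v'$ and at their first common vertex $z'$ on the way to $z$, I obtain two internally disjoint routes from $v'$ to $z'$; their union is a cyclic tuple of length $\ge 3$ and $\le 2m+1$. Its emptiness conditions in the interior are inherited from $P$ and $Q$, and the condition at $v'$ holds because the two incident labels $\{a\}$ and $\beta_1$ have empty intersection ($a\notin\beta_1$) together with $v\ne v'$. A genuine coset cycle of length between $2$ and $2m+1$ contradicts $(2m+1)$-acyclicity, and this contradiction forces $a\in\sh_t(v',z)$.

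I expect the real work, and the main obstacle, to lie in the coset-cycle condition at the reconvergence point~$z'$: unlike the interior and the $v'$-junction, the overlap of the last labels of the two routes need not be empty a priori, and if the routes approached $z'$ ``from the same direction'' the concatenation would fail to be a coset cycle. Here I would use minimality of $P$ and $Q$ together with the regularity of $2$-acyclic frames -- the uniqueness of the minimal connecting sets $\ag(\cdot,\cdot)$ from Lemma~\ref{le:2acycProps} and the cut characterisation of Lemma~\ref{cutChar} -- to show that the two minimal routes genuinely diverge at $z'$; equivalently, a shared final coset could be spliced out to produce a non-$t$ path strictly shorter than $Q$, contradicting its minimality. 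This reduction to internally divergent minimal routes, which turns the concatenation into a bona fide short coset cycle, is the crux; everything else is bookkeeping with the coset-path conditions.
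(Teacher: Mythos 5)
First, a caveat: the paper does not prove this lemma itself --- it is imported wholesale from~\cite{Ca19} --- so there is no in-paper proof to measure your attempt against; I can only assess the proposal on its own terms.

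Your overall plan is the natural one and is almost certainly in the spirit of the source: take the hypothetical bad path $Q$ from $v'$ to $z$ with $a\notin\beta_1$, take a short non-$t$ path $P$ from $v$ to $z$ witnessing $a\notin\sh_t(v,z)$, prepend the $a$-edge $v'\to v$, and close the two routes into a coset cycle of length $\leq (j+1)+k\leq 2m+1$. Your bookkeeping at the $v'$-junction is correct ($\beta_1\cap\{a\}=\{a\}\cap\delta_1=\emptyset$ and $v\neq v'$ give the required empty intersection), and the length budget explains exactly why $(2m+1)$-acyclicity rather than $2m$-acyclicity is assumed. Two remarks on side issues: (i) the non-$t$-ness of the prepended route $S$ is irrelevant for the intended contradiction, since $(2m+1)$-acyclicity forbids \emph{all} short coset cycles, not just non-$t$ ones; so the ``degenerate case'' $\gamma\subseteq\{a\}$ does not need separate treatment for the cycle argument --- but note that in that case every non-$t$ path from $v'$ has first label omitting $a$, so the conclusion $a\in\sh_t(v',z)$ can only hold because the hypotheses are jointly unsatisfiable, which itself requires the full cycle argument; your ``treat it directly from $t=\rho(v,\gamma)$'' does not engage with this. (ii) $a\notin\sh_t(v,z)$ only yields a \emph{short} non-$t$ path with $a\notin\delta_1$, not a minimal one; this is harmless for the length bound but you later lean on minimality of $P$.

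The genuine gap is exactly where you place it, and your proposed repair does not close it. The obstruction to the concatenation being a coset cycle is the pair of conditions at $z$, namely $[q_j]_{\delta_{j-1}\cap\delta_j}\cap[z]_{\delta_j\cap\beta_k}=\emptyset$ and $[z]_{\delta_j\cap\beta_k}\cap[u_k]_{\beta_k\cap\beta_{k-1}}=\emptyset$. This is a statement about \emph{coset overlaps} at the meeting point, not about the two routes sharing a vertex $z'$ before $z$; so ``trimming to the first common vertex'' is not the right reduction, and internal vertex-disjointness buys you nothing here. Your fallback --- splice out a shared final coset and contradict minimality of $Q$ --- is not worked out: the spliced walk acquires new labels (intersections such as $\delta_j\cap\beta_k$, or a detour through a witness $y$ of the non-empty overlap), and one must re-verify both the coset-path conditions at the new junction and the non-$t$ condition for the modified path before any appeal to $d_t(v',z)=k$ is legitimate; moreover the most natural splices shorten the \emph{closed walk} rather than either path to $z$, so ``strictly shorter than $Q$'' does not follow. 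This reconvergence analysis is the entire content of the lemma (it is why the result is delegated to~\cite{Ca19}), and as written the proposal asserts rather than proves it.
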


The agents in $\sh_t(v,z)$ are the ones
that represent the direction one needs to take
if one wants to move from~$v$ to~$z$ on
a short non-$t$ coset path. Lemma~\ref{le:stepAwayT}
makes this notion precise and tells us how to use $\sh_t(v,z)$.
We choose an agent $a\notin\sh_t(v,z)$ and move to a world
$v'\in[v]_a\setminus\{v\}$. If the structure is sufficiently
acyclic, every short non-$t$ coset path from~$v'$ to~$z$
must start with a set that includes agent~$a$.

Lemma~\ref{le:stepAwayT} is the cornerstone for
the second step in the proof of the freeness theorem, which 
establishes $d_t(\zbar,v^*)>m$.
It will be utilised as follows.
Let $w_1,\alpha_1,\dots,\alpha_\ell,w_\ellp$
be a short inner non-$t$ coset path from~$z$ to~$v$,
for $t=\rho(v,\gamma)$. Then
\bre
 \item 
  every set~$\alpha_i$, $1\leq i\leq \ell$,
  is a proper subset of~$\ag(z,v)$;
 \item 
 no class 
$[w_i]_{\alpha_i}$ 
  contains $[v]_\gamma$; in other words,
  if $[w_i]_{\alpha_i}\cap[v]_\gamma\neq\emptyset$,
  then~$\gamma\nsubseteq\alpha_i$.
\ere 

In particular, 
the relevant sets of agents~$\alpha_i$
are bounded in terms of~$\ag(z,v)$ and~$\gamma$.

Assume that we move along an $a_1$-edge from~$v$
to~$v_1$, then along an $a_2$-edge from~$v_1$
to~$v_2$ and so forth, for suitable agents $a_i\in\gamma$
until none remain.
Then the set $\sh_t(v_1,z)$
must contain~$a_1$, the set $\sh_t(v_2,z)$
must contain~$a_2$, etc.\ Let $v'\sim v$ be the final
world in this sequence. If we assume that the distance
$d_t(z,v')$ is still $d_t(z,v)$, then there must be
a non-$t$ inner coset path $w_1,\alpha_1,\dots,\alpha_\ell,w_\ellp$
from~$z$ to~$v'$ of length $\ell=d_t(z,v')$.
It this case it is possible to show 
that the set $\alpha_\ell$ contains
all~$a_i$ and the rest of~$\gamma$, which means $\gamma\subseteq\alpha_\ell$.
This contradicts the assumption that
$w_1,\alpha_1,\dots,\alpha_\ell,w_\ellp$
is a non-$t$ coset path for $t=\rho(v,\gamma)$.
The precise statement necessary for this argument is encapsulated in 
Lemma~\ref{le:multiRunning} below; its rather technical proof
can be found in the appendix.

\begin{lemma}\label{le:multiRunning}
Let $m\in\N$,
\Mf be a \ca structure,
$v$ a world,  
$(\zbar,z_0)$ a finite pointed set,
$\ybar\subset\zbar$ a possibly empty subset and
$w\in\zbar\setminus\ybar$;
let $\gamma=\ag(z_0,v)$ and $t=\rho(v,\gamma)$.
Assume that~\Mf is sufficiently acyclic and
sufficiently rich, and 
\bre 
 \item[--]
  $\gamma\subseteq\ag(z,v)$, for all $z\in\zbar$;
 \item[--]
  $d_t(\ybar,v)>m$.
\ere 
Then there is a world~$v^*\in[v]_\gamma$ with
$\Mf,v^*\sim \Mf,v$ and $\ag(z,v^*)=\ag(z,v)$, for all $z\in\zbar$,
such that
$$d_t(\ybar \cup \{w\},v^*)>m.$$
\end{lemma}

\paragraph*{The freeness theorem.}
This section is devoted to the proof of the freeness theorem,
which is the crucial tool for choosing suitable responses in the \EF
game on \ca structures that are sufficiently rich and acyclic.
The main ingredients are Lemma~\ref{le:triangle}
for the first step and Lemma~\ref{le:multiRunning} for the
second step.

\begin{theorem}[freeness theorem]\label{thm:freeness}
 Let $m,k\in\N$. If a \ca structure~\Mf is
 sufficiently acyclic and sufficiently rich,
 then~\Mf is $(m,k)$-free.
\end{theorem}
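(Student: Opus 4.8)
The plan is to assemble the freeness theorem from the two-step strategy set up above, feeding the given data $v$, a pointed set $(\zbar,z_0)$ with $|\zbar|\leq k$, and $\gamma\supseteq\ag(v,z_0)$ into Lemmas~\ref{le:triangle} and~\ref{le:multiRunning}, and then converting the resulting structural $t$-distance bound into a dual-hypergraph bound via Lemma~\ref{le:dualDistance}. Fix $t:=\rho(v,\gamma)$. Since every copy of $v$ produced along the way will lie in $[v]_\gamma$ and satisfy $\ag(z_0,\cdot)=\gamma$, $2$-acyclicity guarantees $\rho(\cdot,\gamma)=t$ is preserved throughout, so the scale never drifts. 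Because $|\zbar|\leq k$, each lemma is invoked at most $k$ times, and each invocation consumes only a level of acyclicity and richness bounded in terms of $m$; accumulating these yields the uniform ``sufficiently acyclic and sufficiently rich'' hypotheses, quantified by $m$ and $k$.

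First I would arrange $\ag(z_0,v)=\gamma$ exactly. Starting from the given $v$ with $\ag(z_0,v)\subseteq\gamma$, I repeatedly pick an agent $a\in\gamma\setminus\ag(z_0,v)$ and, using richness, move to a bisimilar copy $v'\in[v]_a\setminus\{v\}$; Lemma~\ref{le:addAgent}(1) then gives $\ag(z_0,v')=\ag(z_0,v)\cup\{a\}$, so finitely many such steps produce a bisimilar $v$ with $\ag(z_0,v)=\gamma$. For the first step proper I iterate Lemma~\ref{le:triangle} over the elements of $\zbar$: beginning from the trivially satisfied triangle identity for the singleton $\{z_0\}$, each application installs $\ag(v_1,z)=\ag(v_1,z_0)\cup\ag(z_0,z)$ for one further world while preserving it for those already handled (and preserving $\ag(v_1,z_0)=\gamma$, since $z_0\in\zbar$). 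After at most $k$ applications I obtain $v_1\sim v$ with $\ag(z_0,v_1)\subseteq\ag(z,v_1)$ for every $z\in\zbar$, which is exactly $d_t(\zbar,v_1)>1$.

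The second step boosts this to $d_t(\zbar,v^*)>m+1$ (the bound demanded by the dual-distance conversion) by iterating Lemma~\ref{le:multiRunning}. Maintaining the invariant that the already-processed worlds $\zbar_{\mathrm{done}}$ satisfy $d_t(\zbar_{\mathrm{done}},\cdot)>m+1$, I feed the current copy together with the next world $w\in\zbar\setminus\zbar_{\mathrm{done}}$ into Lemma~\ref{le:multiRunning}; its hypotheses $\ag(z_0,\cdot)\subseteq\ag(z,\cdot)$ (for the handled worlds and for $w$) are exactly what the first step secured and what the lemma's conclusion preserves, so the invariant is re-established with $w$ now included and all relevant $\ag$-values and the scale $t$ left intact. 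After processing all $\leq k$ worlds I have $v^*\sim v$ with $\ag(z_0,v^*)=\gamma$ and $d_t(\zbar,v^*)>m+1$. Lemma~\ref{le:dualDistance} then converts this into $d_t(\bigcup\brck{\zbar},\brck{v^*})>m$ in $d(\Mf)$, which is precisely $(\zbar,z_0)\bot_m v^*$ for $t=\brck{v^*}\cap\brck{z_0}$; together with $v^*\sim v$ and $\ag(z_0,v^*)=\gamma$ this witnesses $(m,k)$-freeness.

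I expect the genuine difficulty to reside not in this orchestration but in Lemma~\ref{le:multiRunning} itself: the delicate point is that pushing $w$ away must not drag any previously-distanced world back within range, and controlling this simultaneously for all of $\zbar$ against the fixed scale $t$ is where the $\sh_t$-machinery of Lemma~\ref{le:stepAwayT} does the real work (whence that proof is relegated to the appendix). Within the theorem itself the main care is purely in the bookkeeping---verifying that each lemma's preconditions survive the iteration and that the acyclicity and richness budgets remain bounded in $m$ and $k$.
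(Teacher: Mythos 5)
Your proposal is correct and follows essentially the same route as the paper's proof: first normalising $\ag(z_0,v)=\gamma$ via Lemma~\ref{le:addAgent} and richness, then iterating Lemma~\ref{le:triangle} over $\zbar$ to secure $d_t(\zbar,\cdot)>1$, then iterating Lemma~\ref{le:multiRunning} to reach $d_t(\zbar,v^*)>m+1$, and finally converting to the dual-hypergraph bound via Lemma~\ref{le:dualDistance}. You also correctly locate the real difficulty in Lemma~\ref{le:multiRunning} rather than in this orchestration.
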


\begin{proof}
 Let~$v$ be a world, $(\zbar,z_0)$ a pointed set of size
 $|\zbar|=k$ enumerated as $(z_i)_{0\leq i<k}$, 
 and $\gamma\supseteq \ag(v,z_0)$.
 We show that there is a world~$v^* \sim v$ with $\ag(v^*,z_0)=\gamma$
 such that~$v^*$ and~$(\zbar,z_0)$ are $m$-free.

 \emph{Preparation}: 2-acyclicity and Lemma~\ref{le:addAgent},
 together with
 sufficient richness, imply the existence of some $v' \sim v$
 with $\ag(v',z_0)=\gamma$. Replace~$v$ by this world~$v'$
 so that $\ag(v,z_0)=\gamma$.

 \medskip
 We must now find some world~$v^*\sim v$ with $\ag(v^*,z_0)=\gamma$
 such that $d_t(\bigcup\brck{\zbar},\brck{v^*})>m$,
 for $t:=\rho(v,\gamma)=\brck{v}\cap\brck{z_0}$.  
By Lemma~\ref{le:dualDistance} 
it suffices to show $d_t(\zbar,v^*)>m+1$.
 We do this in two steps.
 Step~1 ensures $\ag(z_0,v^*)\subseteq\ag(z,v^*)$
 (which is equivalent to $d_t(\zbar,v^*)>1$)
 for all $z\in\zbar$, and
 step~2 ensures $d_t(\zbar,v^*)>m+1$.
 
 \medskip
 \emph{Step 1}: we show by induction on $0\leq j<k$
 that there are worlds $v_j\sim v$ such that $d_t(z_i,v_j)>1$,
 for all $0\leq i\leq j$. The base case works
 for $j=0$, $v_0:=v$. For the induction step let $j\geq 1$ and assume
 there is a world~$v_{j-1}\sim v$ with $\ag(z_0,v_{j-1})=\gamma$
 such that $\ag(z_0,v_{j-1})\subseteq\ag(z_i,v_{j-1})$
 for all $0\leq i<j$. 
 Together with $2$-acyclicity we get
\begin{align*}
 &\ag(z_0,z_i)\subseteq \ag(z_0,v_\jm) \cup \ag(v_\jm,z_i) =
 \ag(v_\jm,z_i), \mbox{ and then}\\
 &\ag(v_\jm,z_i)\subseteq\ag(v_\jm,z_0) \cup \ag(z_0,z_i)\subseteq\ag(v_\jm,z_i).
\end{align*}

Hence, $\ag(v_\jm,z_i)  =  \ag(v_\jm,z_0) \cup \ag(z_0,z_i)$, for all
 $0\leq i<j$.
 Now Lemma~\ref{le:triangle} yields a world~$v_j$ with
 $\ag(v_j,z_j) =  \ag(v_j,z_0) \cup \ag(z_0,z_j)$ so that
 $\ag(v_j,z_j) \; \supseteq \; \ag(v_j,z_0)$.

 In particular, we obtained a world~$v_\km\sim v$
 with $\ag(v_\km,z_0)=\gamma$ such that
 $\ag(z_0,v_\km)\subseteq\ag(z_i,v_\km)$, for all $0\leq i\leq \km$,
 or equivalently, 
 $d_t(\zbar,v_\km)>1$, for $t=\rho(v_\km,\gamma)$.
 We set the new~$v$ to be~$v_\km$.
 
 \medskip
 \emph{Step 2:} we show by induction on $0\leq i\leq k$
 that there are worlds $v_i\in[v]_\gamma$ such that
 \bre 
  \item[--] $v_i\sim v$,
  \item[--] $\ag(v_i,z)=\ag(v,z)$ , for all $z\in\zbar$, and
  \item[--] $d_t(\ybar_i,v_i)>m+1$,
    for $\ybar_i:=\{z_j\in\zbar: 0\leq j< i\}$. 
 \ere
 The base case works for $v_0:=v$ with
 $\ybar_0=\emptyset$.
 For the induction step let $0\leq i< k$ and assume there
 is a world $v_i\in[v]_\gamma$
 with $v_i\sim v$, $\ag(v_i,z)=\ag(v,z)$, for all $z\in\zbar$,
 and $d_t(\ybar_i,v_i)>m+1$.
 Since $\gamma\subseteq\ag(z,v)$, for all $z\in\zbar$,
 and $z_i\in\zbar\setminus\ybar_i$,
 Lemma~\ref{le:multiRunning} implies a world
 $v_{i+1}\in[v_i]_{\gamma}=[v]_{\gamma}$ with
 \bre 
  \item[--] $v_{i+1}\sim v_i$,
  \item[--] $\ag(z,v_{i+1})=\ag(z,v_i)=\ag(z,v)$, for all $z\in\zbar$, and
  \item[--] $d_t(\ybar_i\cup\{z_i\},v_{i+1})>m+1$.
 \ere
We obtain the desired world~$v^\ast=v_k$ by induction. 
\end{proof}

\section{Characterisation theorem}
\label{sec:characterisation}

Our main result is a modal characterisation
theorem for common knowledge logic~\MLCK over (finite)
\Sfive structures.
This section contains the final step of its proof.
We described the strategy for the proof at the end of
Section~\ref{basicssec}:
if we can show that an \FO-formula~$\varphi$ that is
$\sim$-invariant over (finite) \CK structures is
$\sim^\ell$-invariant over (finite) \CK structures,
for some $\ell\in\N$, then~$\varphi$ must be equivalent
to an $\ML$-formula over (finite) \CK structures by
the modal \EF theorem.
This is done by upgrading $\ell$-bisimilarity
to $\FO_q$-equivalence over (finite) \ca structures,
i.e.\ we show for suitable pointed \ca structures
$\Mf,w$ and $\Nf,v$ that
\[ 
\Mf,w  \sim^\ell\Nf,v \quad \Rightarrow \quad \Mf,w\equiv_q \Nf,v,
\]
where $q$ is the quantifier rank of~$\varphi$
and~$\ell$ depends on~$q$.
Upgrading over \ca structures suffices because
by Lemma~\ref{CayleyCKlemproper} (Main Lemma~\ref{CayleyCKlem})
\ca structures are, up to bisimulation,
the universal representatives of \CK structures.
For the upgrading, we regard
a \ca structure as suitable if it is
$n$-acyclic and $k$-rich, for sufficiently large 
$n,k\in\N$ that depend on~$q$.
The construction of sufficiently acyclic and 
rich (finite) coverings for (finite) \CK structures, for the first part of 
the upgrading argument, was presented in Sections~\ref{mainlemsubsection} 
and~\ref{sec:cosetStructres}. It remains to show
that sufficiently acyclic and 
rich $\ell$-bisimilar
\ca structures are $\FO_q$-equivalent.
The necessary structure theory for playing first-order
\EF games on the non-elementary class of \ca structures
was developed in Section~\ref{cayleysec} partly based 
on results from~\cite{Otto12JACM,Ca19}. Its central notion of freeness
will now play a crucial r\^ole in the analysis of the \EF game
to prove $\equiv_q$-equivalence of suitable \ca 
structures on the basis of $\sim^\ell$-equivalence 
for sufficiently large $\ell$.

\paragraph*{Sketch of the core idea.}
In order to win the $q$-round \EF game
on $\Mf,w$ and $\Nf,v$ player~$\PII$ needs to
keep track of several features, which are built into 
an invariant to be maintained through the successive rounds.
First and foremost we incorporate an increasing chain of
partial isomorphisms $(\sigma_i)_{i \leq q}$ between $\Mf$ and $\Nf$, 
where 
$\sigma_0 = \{ w \mapsto v \}$ and, for $i < q$,  
$\sigma_{i+1} \supset \sigma_{i}$ 
covers the elements newly pebbled in the $(i+1)$-st round. 
As the invariant needs to be good 
for the remaining rounds, $\sigma_i \colon \Mf\restr
\mathrm{dom}(\sigma_i) =: \Mf_i  \simeq \Nf_i := \Nf\restr
\mathrm{im}(\sigma_i)$ 
has to include worlds that lie on short paths between 
pebbled worlds. What `short' means in relation to the number of
rounds still to be played, is specified by a suitably chosen 
decreasing sequence of critical distances $(m_i)_{ i \leq q}$.
To guarantee extendability throughout the 
remaining rounds, we also need $\sigma_i$ to preserve 
the $\ell_i$-bisimulation type of elements for a suitably
chosen decreasing sequence $(\ell_i)_{i \leq q}$  starting from $\ell_0 =
\ell$, the degree of initial  bisimilarity  in $\Mf,w \sim^\ell
\Nf,v$. The extension steps $\sigma_{i+1} \supset \sigma_i$ 
reflect \PII's response to \PI's challenge in round $i+1$, which also 
updates the auxiliary information in the invariant.
Part of this auxiliary information 
resides in the dual hypergraphs~$d(\Mf)$ and~$d(\Nf)$.
The invariant includes
substructures of the dual hypergraphs, which essentially are
the dual images of~$\Mf_i$ and~$\Nf_i$ (cf.\  Definition~\ref{dualhypergraphdef} 
for the dual hypergraph). In fact, the choice of the
decreasing sequence of critical distances 
$(m_i)_{i \leq q}$ (for short distances in round~$i$)
is rooted in the dual hypergraphs, 
where results from~\cite{Otto12JACM} 
can be used to bound the sizes of convex $m_i$-closures 
(cf.\ Definition~\ref{def:closure} and Lemma~\ref{le:closureSize});
and these bounds in turn determine the decreasing sequence $(\ell_i)_{i \leq q}$
of required bisimulation levels between elements of
$\Mf$ and $\Nf$ that are linked by $\sigma_i$. 
The rather complex overall structure of the invariant, which is 
schematically presented in Figure~\ref{invariantfigure}, 
is formally presented, and shown to be 
maintainable through $q$ rounds, in Section~\ref{sec:invariant}. 
The proof of the upgrading and the characterisation theorem is
then completed in Section~\ref{sec:upgrading}.

\begin{figure}
\nt\hspace{-.7cm}
\xymatrix{
d(\Mf)
&& 
\Mf,\wbar  
\ar@{->}[ll]|{\;d\;}
\ar@{->}[rrrr]^{\;\mbox{$\sigma_i$}\;}_{\;\mbox{\tiny partial}\;}
&&&&  
\Nf,\vbar 
\ar@{->}[rr]|{\;d\;}
&& 
d(\Nf) 
\\
d(\Mf)\restr  \qmi 
\ar@{}[u]|{\rotatebox{90}{\mbox{\small$\subset$}}}
&&
\Mf_i,\wbar 
\ar@{}[u]|{\rotatebox{90}{\mbox{\small $\subset$}}}
\ar@{->}[rrrr]^{\mbox{$\sigma_i$}}_{\simeq}
\ar@{.}[ll]
&&&&
\Nf_i,\vbar 
\ar@{}[u]|{\rotatebox{90}{\mbox{\small$\subset$}}}
\ar@{.}[rr]
&&
d(\Nf)\restr \qni 
\ar@{}[u]|{\rotatebox{90}{\mbox{\small$\subset$}}} 
\\
\\
&&&& 
T_i 
\ar@{->}[uull]| {\,\mbox{$\dhmi$}} 
\ar@{->}[uullll]|{\,\mbox{$\dmi$}\!}  
\ar@{->}[uurr]|{\,\mbox{$\dhni$}\,}  
\ar@{->}[uurrrr]|{\,\mbox{$\dni$}\,} 
&&&&&
}\quad\nt
\caption{A snapshot of the invariant, after round~$i$ of the game on 
$\Mf,w;\Nf,v$,
with pebbles on $\wbar = (w,w_1,\ldots,w_i)$ and $\vbar =
(v,v_1,\ldots,v_i)$, based on isomorphic underlying tree
decompositions
$\Tmi 
= (T_i,
\dmi) \simeq (T_i, \dni)
= \Tni$ 
of acyclic sub-hypergraphs 
$d(\Mf)\restr \qmi 
\simeq d(\Nf)\restr 
\qni$.} 
\label{invariantfigure}
\end{figure}

\subsection{The invariant}\label{sec:invariant}

\paragraph*{Definition of the invariant.} 
Player~$\PII$ wins a play in the $q$-round \EF game on
the pointed Kripke structures~$\Mf,w_0$
and~$\Nf,v_0$ if $(w_i\mapsto v_i)_{0\leq i\leq q}$ induces a
partial isomorphism~$\sigma_q$, for the pebbled worlds 
$w_0,w_1,\dots,w_q\in W$ and $v_0,v_1,\dots,v_q\in V$.
Starting with the mapping $\sigma_0=\{w_0\mapsto v_0\}$
before the first round is played, \PII extends~$\sigma_i$,
the partial isomorphism after the $i$-th round in response to player
\PI's challenges, round after round.
In order to do that for $q$~rounds
in a foresighted manner,
she needs to keep track of more information
than just the current~$\sigma_i$. 
This auxiliary information is built into the invariant, which
depicts the current game position and maintains a measure of  
the similarity between the $\Mf$- and the $\Nf$-parts. 
The required degree of similarity depends on the number of rounds
still to be played (decreasing as $q-i$ with the index~$i$ of rounds). 
It is governed by two
decreasing sequences of natural numbers, 
a locality parameter and a bisimilarity parameter.
The locality parameter in the sequence $(m_i)_{0\leq i \leq q}$ indicates that distances
up to~$m_i$ are considered short in the $i$-th round;
the bisimilarity parameter in the sequence 
$(\ell_i)_{0\leq i \leq q}$ specifies the degree of bisimilarity that 
worlds~$w'\in \Mf$ and~$v'\in \Nf$ need to display if 
they are matched (by $\sigma_i$) in round~$i$. 
As usual in \EF games, the locality parameter $m_i$ decreases by
about one half in each round, 
as reflected in the recursive definition
\[
m_q := 2 \quad \mbox{ and } \quad
m_\im := 2 m_i + 1 \;
\mbox{ for } 
0 < i \leq q;
\]
and the recursive definition of the 
sequence $(\ell_i)_{0\leq i \leq q}$ 
refers to the functions~$f_m$
from Lemma~\ref{le:closureSize}, which bound the size of $m$-closed
sets, for the levels $m = m_i$:
\[
\ell_q := 1
\quad \mbox{ and } \quad
\ell_\im := \ell_i +  f_{m_i}(|\tau|+1) \;
\mbox{ for }  
0 < i \leq q.
\]

The structural backbone of the invariant is a tree decomposition of matching
representations of relevant substructures $\Mf_i \subset \Mf$ and 
$\Nf_i \subset \Nf$ in their dual hypergraphs $d(\Mf)$ and $d(\Nf)$. 
These tree decompositions are formalised as 
$\Tmi = (T_i,\dmi)$ and 
$\Tni = (T_i,\dni)$ based on the same tree $T_i$ 
but representing, as tree decompositions, 
acyclic induced sub-hypergraphs $d(\Mf)\restr \qmi
\subset d(\Mf)$ and $d(\Nf)\restr \qni \subset d(\Nf)$.
These tree decompositions serve as the scaffolding for the updates and
extensions that need to be performed from round to round.
The common tree structure $T_i$ in their tree decompositions, in
particular, governs the structural similarity between the $\Mf$- and
$\Nf$-parts of the current game position.

\medskip
With respect to the dual hypergraphs $d(\Mf)$ and
$d(\Nf)$ compare Definition~\ref{dualhyperedgedef} and 
discussion there.
Note that the underlying frame of
$\Mf$ is $\FO$-interpretable in $d(\Mf)$ through
identification of $w \in W$ with the vertex $d(w) := [w]_\emptyset \in
\Qalph_\emptyset \subset d(W)$ and note that 
$(w,w') \in R_\alpha$ iff $[w']_\alpha \in \brck{w}$ 
iff there is some vertex $a \in \Qalph_\alpha \subset d(W)$ that is joined 
by hyperedges to both $[w]_\emptyset$ and to $[w']_\emptyset$.
We also note in this connection that $\brck{w}$ 
is the unique hyperedge of $d(\Mf)$ that is incident on 
$[w]_\emptyset$; so also $(w,w') \in R_\alpha$ iff $\brck{w}$ 
and $\brck{w'}$ overlap in a vertex of colour $\Qalph_\alpha$.
In particular, $d(\Mf),[w]_\emptyset$ and $d(\Nf),[v]_\emptyset$,
further augmented by the propositional assignments,
determine whether $\Mf,w \equiv_q \Nf,v$.
This justifies our focus on the dual representation of $\Mf$ and $\Nf$
in design and maintenance of the invariant.

\medskip
The full invariant can be described as follows,
assuming that the worlds
$w_0,w_1,\dots,w_i\in W$ (where $w_0=w$)
and $v_0,v_1,\dots,v_i\in V$(where $v_0=v$) have been pebbled 
after the $i$-th round (cf.~Figure~\ref{invariantfigure}):
\begin{itemize}
 \item[(I1)]
   two isomorphic induced substructures $\Mf_{i}\subseteq\Mf$
   and $\Nf_{i}\subseteq\Nf$ that contain
   the pebbled worlds in each
   structure, with an isomorphism 
   \[
\sigma_i\colon \Mf_{i},w_0,\ldots,w_i \simeq \Nf_{i},v_0,\ldots,v_i
\]   
 that preserves $\sim^{\ell_i}$-types: 
  $\Mf,w \sim^{\ell_i} \Nf,\sigma_i(w)$ for all $w \in \Mf_i$;
 \item[(I2)]
   $m_i$-closed subsets~$\qmi \subseteq d(W)$
   and~$\qni\subseteq d(V)$, that contain
   the dual images $[w_j]_\emptyset = \{ w_j \} \in d(W)$ and 
   $[v_j]_\emptyset = \{ v_j \} \in d(V)$ for $j \leq i$;
\item[(I3)]
   isomorphic tree decompositions 
   $\Tmi = (T_i,\dmi)$ 
  and $\Tni = (T_i,\dni)$ of the sub-hypergraphs 
$d(\Mf)\restr \qmi \subset d(\Mf)$ and 
$d(\Nf)\restr \qni \subset d(\Nf)$, respectively;
\item[(I4)]
associated maps $\dhmi \colon T_i \rightarrow W$ and 
$\dhni\colon T_i \rightarrow V$ that pick 
representatives
$w_u := \dhmi(u) \in \bigcap \dmi(u)$ in $\Mf_i$ 
and 
$v_u := \dhni(u) \in \bigcap \dni(u)$ in $\Nf_i$ 
for $u \in T_i$,%
\footnote{Note that, as subsets of the universes $d(W)/d(V)$ 
of the dual hypergraphs, $\dmi(u)/\dni(u)$ 
are sets of equivalence classes, i.e.\ sets of subsets in $W/V$;
the maps $\dhm/\dhn$ on the other hand go to $W/V$ (and do
not in themselves constitute tree decompositions). And, e.g.\ $w_u =
\dhmi(u) \in \bigcap \dmi(u)$ precisely means that $\dmi(u) \subset \brck{\dhmi(u)}$.}
such that the isomorphism $\sigma_i \colon \Mf_i
\simeq \Nf_i$ of (I1) is given by $\sigma_i(w_u) = v_u$,  where
\[
  \barr{c@{\;=\;}c@{\;=\;}c}
  \Mf_i & \Mf \restr \{w_u \colon u \in T_i \} & \Mf \restr \{
  \dhmi(u) \colon u \in T_i \},
  \\ \hnt
 \Nf_i & \Nf \restr \{v_u \colon u \in T_i \}  & \Nf \restr \{
 \dhni(u) \colon u \in T_i \}.
 \earr
\] 
\end{itemize} 

The induced substructures $\Mf_i$ and $\Nf_i$ contain
all worlds pebbled during the first $i$ rounds together with, essentially, their
closure under short coset paths. 
These closures are induced by the 
$m_i$-closures $\qmi$ and $\qni$. The worlds of $\Mf_i$
and $\Nf_i$ arise as appropriate choices of 
the representatives $\dhmi(u)$ and 
$\dhni(u)$ in $\Mf$ and $\Nf$ for the bags of their isomorphic tree
decompositions of the acyclic induced sub-hypergraphs
$d(\Mf)\restr \qmi$ and $d(\Nf)\restr \qni$. The matching tree decompositions 
of $d(\Mf)\restr \qmi$ and $d(\Nf)\restr \qni$ form the structural 
backbone of the invariant; and the careful choice of matching
representatives in $\Mf$ and $\Nf$ govern player~$\PII$'s strategy to 
maintain the invariant in response to player~$\PI$'s move, 
over and above the actual placement of a single pebble. 
If player~$\PII$ manages to maintain the invariant
throughout the $q$-round game, she wins after round~$q$ 
since $\sigma_q$ is a partial isomorphism that matches 
pebble positions:
$\sigma_q \colon \Mf_q, w_0,\ldots,w_q \simeq \Nf_q, v_0,\ldots,v_q$ 
for the induced substructures 
$\Mf_q \subseteq \Mf  $ and~$\Nf_q \subseteq \Nf$.

The invariant is initialised, for $i=0$, as
$\Mf_0 := \Mf\restr \{ w_0 \}$,   
$\Nf_0 := \Nf\restr \{ v_0 \}$ with $\sigma_0\colon  w_0 \mapsto v_0$ 
such that $\Mf, w = \Mf,w_0 \sim^{\ell_0} \Nf,v_0 = \Nf, v$  
is given; we let
$\qmo := \{ [w_0]_\emptyset \}$ and 
$\qno := \{ [v_0]_\emptyset \}$,
which are trivially $m_0$-closed as singleton sets; 
and $d(\Mf)\restr \qmo$ and $d(\Nf)\restr \qno$ are
trivially acyclic with tree decompositions
$\Tmo,\Tno$, based on the trivial tree 
$T_0 = \{ \lambda \}$ consisting of just its root.

The idea behind the strategy to maintain the invariant through
round~$i$ can be roughly described as follows
(cf.\ Figure~\ref{maintaininginvaraint.1.figure}).
We assume w.l.o.g.\ that player~$\PI$ puts a pebble on $w_i \in \Mf$. 
We then set 
$\qmi := \mathrm{cl}_{m_i}(\qmim \cup \{ [w_i]_\emptyset \} )$. 
By our acyclicity assumptions,
the substructure $d(\Mf)\restr\qmi$ is acyclic
(cf.\ Lemma~\ref{le:closureSize}) and admits a tree decomposition
$\Tmi=(T_i,\dmi)$ extending the tree decomposition 
$\Tmim$ by a single new subtree (cf.\ 
Lemmas~\ref{le:closureAdd} and~\ref{le:closureSize}). 
Elements $\dhmi(u)$ representing the new bags 
$\dmi(u)$ for $u \in T_i \setminus T_{i-1}$ are chosen,
governed by just the condition that 
$\dhmi(u) \in \bigcap \dmi(u)$, i.e.\ such that
$\dmi(u) \subset \brck{\dhmi(u)}$.
It is noteworthy that already these choices on the side of $\Mf$
introduce an element of nondeterminism into \PII's strategy.
In light of conditions~(I1) and (I4) for $\sigma_i$, particulars of
these choices (e.g.\ even w.r.t.\ their propositional assignments)
will have to be matched in $\dhni(u)$ on the side of $\Nf$.

As $\brck{w_i}$ is the unique hyperedge of $d(\Mf)$ that is incident with 
$d(w_i) = [w_i]_\emptyset = \{ w_i \} \in \qmi$,  the bag $\dmi(u)$ containing 
this element of $d(\Mf)\restr \qmi$ is necessarily represented by 
$\dhmi(u) = w_i$. Putting $\Mf_i := \Mf\restr
\mathrm{image}(\dhmi)$, $w_i$ is part of the extension  
$\Mf_i \supset \Mf_{i-1}$ which we view as a representation in $\Mf$ of the
tree decompositions $\Tmi \supset \Tmim$
of $d(\Mf)\restr \qmi \supset d(\Mf)\restr \qmim$.

\begin{figure}
\nt\hspace{-1.4cm}\xymatrix{
d(\Mf)\restr \qmi & \Mf_i,\wbar w_i \ar@{->}[l]
\ar@{.}[rrrrrr]|{\;\sigma_i\;}
&&&&&&{\;\;?\;\;} \ar@{.}[r]& {\;\;?\;\;}
&
\\
d(\Mf)\restr \qmim  \ar@{}[u]|{\rotatebox{90}{\mbox{\small$\subset$}}}
& 
\Mf_{i-1},\wbar 
\ar@{->}[l] 
&&&T_{i} \ar@{->}[llllu]|{\;\dmi}
\ar@{->}[lllu]_{\dhmi}
\ar@{.}[rrrru]
\ar@{.}[rrru]
&&& \Nf_{i-1},\vbar 
\ar@{->}[r]
&
d(\Nf)\restr \qnim  \\
&&&& T_{i-1} 
\ar@{->}[llllu]|{\dmim}
\ar@{->}[lllu]_{\,\dhmim\,}
\ar@{}[u]|{\rotatebox{90}{\mbox{\small$\subset$}}}
\ar@{->}[rrrru]|{\dnim}
\ar@{->}[rrru]^{\,\dhnim\,}
}
\caption{Analysis of pebble placement $w_i \in \Mf$\ in round~$i$, in terms of 
extensions $\qmi$, $\Tmi$, $\dmi$ and $\dhmi$  
towards finding matches 
$\qni$, $\Tni$, $\dni$, $\dhni$  
and $\Nf_i,\vbar v_i$.} 
\label{maintaininginvaraint.1.figure}
\end{figure}

The challenge for~$\PII$ lies in
matching these extensions $\qmi \supset \qmim$, 
$\Tmi \supset \Tmim$ with $\dmi \supset \dmim$, 
and $\dhmi \supset \dhmim$ on the side of $\Nf$
and $d(\Nf)$ in order to maintain the invariant with all its
constraints (I1)--(I4). The following section presents a detailed discussion.

\paragraph*{\boldmath Maintaining the invariant.} 
We show how to maintain the invariant through round~$i$, 
w.l.o.g.\ in response to a placement of the $i$-th pebble on $w_i \in \Mf$.  
We assume the invariant after round~$i-1$ 
provides sets $\qmim \subset d(W)$ and 
$\qnim \subset d(V)$ inducing acyclic sub-hypergraphs 
of $d(\Mf)$ and $d(\Nf)$ with tree decompositions 
$\Tmim = (T_{i-1},\dmim)$ of $d(\Mf)\restr \qmim$
and $\Tnim = (T_{i-1},\dnim)$ of $d(\Nf)\restr \qnim$ 
and surjective maps $\dhmim \colon T_{i-1}\rightarrow \Mf_{i-1}$ and 
$\dhnim \colon T_{i-1} \rightarrow \Nf_{i-1}$
such that $\Mf_{i-1} := \Mf \restr \mathrm{image}(\dhmim)$
and $\Nf_{i-1} := \Nf \restr \mathrm{image}(\dhnim)$
are isomorphic via
\[
\sigma_{i-1} \colon \Mf_{i-1},w_0,\ldots,w_{i-1} \simeq \Nf_{i-1},v_0,\ldots,v_{i-1},
\]
which preserves $\sim^{\ell_{i-1}}$ and is compatible with 
$\dhmim$ and $\dhnim$ in the sense that the
following diagram commutes:
\[
\xymatrix{
\Mf_{i-1} 
\ar@{->}[rrrr]|{\;\sigma_{i-1}\;}
&&&& \Nf_{i-1}
\\
&& T_{i-1}  \ar@{->}[llu]|{\;\dhmim\;}  \ar@{->}[rru]|{\,\dhnim\,}
}
\]

We assume that player~$\PI$ chooses to pebble an element $w_i$ of
$\Mf$ for which $[w_i]_\emptyset \not \in \qmim$:  otherwise
player~$\PII$ can directly respond with $v_i := \sigma_{i-1}(w_i)$ and
the invariant is trivially maintained.

\paragraph*{First stage: working on the side of $\Mf$.}
We include $[w_i]_\emptyset$ and 
analyse the new configuration on the side of $\Mf$ 
in terms of extensions $\qmi$, $\Tmi$, $\dhmi$. 
For $\qmi := \mathrm{cl}_{m_i}( \qmim \cup \{[w_i]_\emptyset\}) $ we
have that 
\bne
\item[--]
$[w_i]_\emptyset \not\in \qmim$ by assumption; 
\item[--]
$1\leq d(\qmim,[w_i]_\emptyset)\leq 2 \leq m_i$ as 
$\mathrm{diam}(d(\Mf)) = 2$.
\ene

As $d(\Mf)$ is sufficiently acyclic, Lemma~\ref{le:closureAdd} can be applied.
Denoting as $\Dm := \qmim \cap N^1(\qmi\setminus \qmim)$
the region in which the extended closure
attaches to~$\qmim$, we know that $\qmi \setminus \qmim$
is connected, and that~$\Dm$ separates $\qmi\setminus \qmim$
from $\qmim\setminus \Dm$ so that 
$\qmi = \qmim \cup\, \mathrm{cl}_{m_i}(\Dm\cup \{ [w_i]_\emptyset \})$.
In fact $\Dm$ is a separator in the graph-theoretic sense so that 
every path linking $\qmim \setminus \Dm$  to $\qmi\setminus \qmim$
in $d(\Mf)\restr \qmi$ must go through $\Dm$.
By Lemma~\ref{le:closureAdd}, 
$\Dm$ is a clique since~$\qmim$ is
$(2m_i + 1)$-closed ($m_\im = 2m_i + 1$). By  
Lemma~\ref{le:closureSize}, the size of 
$\mathrm{cl}_{m_i}(\Dm\cup \{ [w_i]_\emptyset \})$
is bounded by $f_{m_i}(|\tau|+1)$,
which implies that $d(\Mf)\restr \qmi$ is 
tree decomposable,
in fact with a tree decomposition extending
$\Tmim$ by a subtree that covers the new part. 
Let 
\[
Q:=(\qmi \setminus \qmim)\cup \Dm,
\]
and let $u_0 \in T_\im$ be a node of $\Tmim$ 
representing $\Dm$, i.e.\ with $\dmim(u_0) \supset \Dm$. 
The r\^ole of $\Dm$ as a separator in $d(\Mf)\restr \qmi$ 
implies that $d(\Mf)\restr \qmi$ admits a tree decomposition 
obtained as the fusion of the tree decomposition $\Tmim$ 
of $d(\Mf)\restr \qmim$ and a tree decomposition $\Tm =
(T,\dm)$ of $d(\Mf)\restr \qm$ whose root node $\lambda$ represents
$\Dm$ as $\Dm= \delta(\lambda)$.
Choosing $\Tm$ as a succinct tree decomposition of 
$d(\Mf)\restr \qm$ rooted at $\delta(\lambda) = \Dm$, 
we may assume that it has no inclusion
relationships between neighbouring bags 
(other than possibly strict
inclusions at the root), and that its depth
is bounded by $|Q|$.%
\footnote{Cf.\ discussion in relation with
  Definition~\ref{treedecompdef}.}
The resulting tree decomposition $\Tmi = (T_i,\dmi)$ 
extends $\Tmim= (T_\im,\dmim)$ and is obtained by attaching the root 
$\lambda$ of $\mathcal{T}$ to the node $u_0$ of $T_\im$
that represents $\Dm$. So $T$ becomes the subtree of new nodes in $T_i$ and
$\dmi = \dmim \,\dot{\cup}\, \delta$.  
We finally extend $\dhmim \colon T_\im \rightarrow
\Mf$ to $\dhmi\colon T_i\rightarrow \Mf$.

The only relevant 
constraint in the choices of $\dhmi(u)$ for the new nodes $u \in T$ 
is that $\brck{\dhmi(u)} \supseteq \dmi(u)$. This 
condition determines the choice of $\dhmi(u)$ precisely up to 
an $\alpha_u$-class for $\alpha_u = \bigcap \{ \alpha \colon \delta(u)
\cap \Qalph_\alpha \not= \emptyset \}$. With such choices for $u \in T$, 
and in particular choosing $\hat{\delta}(\lambda) :=
\dhmi(\lambda) =\dhmim(u_0) \in \Mf_\im$ at the root
node of the tree decomposition $\Tm = (T,\dm)$ of
$d(\Mf)\restr Q$, we put 
$\dhmi = \dhmim \,\dot{\cup}\; \hat{\delta}$
and achieve $\dmi(u)\subseteq \brck{\dhmi(u)}$ for all $u\in T_i$.

\medskip
The extension from
$d(\Mf)\restr \qmim$ to
$d(\Mf)\restr \qmi$, i.e.\ from
$\qmim$ to $\qmi$, is directly parameterised by
the succinct tree decomposition $\Tm = (T,\dm)$:
\[
  \qmi = \qmim \cup \qm = \qmim \cup
    \bigcup \{ \dm(u) \colon u \in T \}).
\]

In light of Observation~\ref{initialtreeobs}, 
the $m_i$-closure of $\qmi$ guarantees that,
for initial segments $U \subset T$ of the tree $T$,
the interleaving extension stages 
\[
  \qmim \cup \bigcup \{ \dm(u) \colon u \in U \}
\]
as well as the subsets
$\bigcup \dm(U) = \bigcup \{ \dm(u) \colon u \in U \}$
are $m_i$-closed in $d(\Mf)$.

\begin{observation}
  \label{inittreeextobs}
  For initial tree segments $U \subset T$, the subsets
  $\bigcup \dm(U)  \subset d(W)$ and
  $\qmim \cup \bigcup \dm(U)  \subset d(W)$ are $m_i$-closed.
  The corresponding restrictions $\Tm\!\!\restr U$ and $\Tmi\!\!\restr
  (T_\im\cup U)$
  form tree decompositions of the associated induced sub-hypergraphs
  of $d(\Mf)$.
  \end{observation}

The extension from $\Mf_\im$ to $\Mf_i$, on the other hand,
stems from the elements $\dhm(u)$ for $u \in T$ chosen to
represent the bags $\dm(u)
\subset \qm \cap \brck{\dhm(u)}$:
\[
\Mf_i = \Mf \restr \{ \dhmi(u) \colon u \in T_i \}=
\Mf \restr ( \{ \dhmim(u) \colon u \in T_\im \} \cup
\{ \dhm(u) \colon u \in T \}).%
\footnote{These two subsets overlap in $\dhmim(u_0) = \dhm(\lambda)$.}
\]

\paragraph*{Second stage: finding matches on the side of $\Nf$.}
This stage of the construction is about
finding mirror images of the augmentations on the side of $\Nf$, so as
to instantiate  the question marks in Figure~\ref{maintaininginvaraint.1.figure}.
These then yield the desired update of the invariant 
after round~$i$, and in particular a response move 
for player~$\PII$.

With $\sigma_\im(\dhmim(u_0)) =: \dhn(\lambda) =: v_\lambda$
the given invariant from round~$i-1$ provides
an $\ell_\im$-bisimilar
copy of $\dhmim(u_0) = \dhm(\lambda) =: w_\lambda \in \Mf$
on the side of $\Nf$. This also determines an isomorphic image
\[
\Dn \subset \dnim(u_0)
\]
of the set~$\Dm \subset \dmim(u_0) \subset \qm$ in $d(\Nf)$. 
This plays the r\^ole of $\dn(\lambda)$ as a contribution to $\qn$ on
the side of $\Nf$, suitable to match $\dm(\lambda) = \Dm$. 
Recall that local isomorphisms between $d(\Mf)\restr \brck{w_u}$
and $d(\Nf)\restr \brck{v_u}$ (for any pairing of $w_u$ with $v_u$)
are uniquely determined by the $\Qalph_\alpha$-colouring of the
elements $[w_u]_\alpha$ and $[v_u]_\alpha$ alone. 

Starting from this image $v_\lambda$ for $\sigma_i(w_\lambda)$
we need to find suitable $v_u$ for
$\sigma_i(w_u)$ that build up an isomorphic  
image of that part of $\Mf_i$ that is represented by the remainder of~$\qm$.
This is done in an induction on the structure of the tree~$T$,
which represents $\qm$ in the tree decomposition $\Tm = (T, \dm)$ 
of $d(\Mf)\restr \qm$. The idea is to build up an isomorphic image 
$d(\Nf)\restr \qn \subset d(\Nf)$ with tree decomposition $\Tn = (T,
\dn)$, by induction on the tree $T$, and starting at the root
node $\lambda$ and from $\dn(\lambda) := \Dn \subset
\dnim(u_0)$, based on choices for $v_u := \dhn(u)$ that match 
$w_u = \dhm(u)$. Any such choice for $\dhn(u)$ fully determines  
$\dn(u)$ and its contribution to $\qn$, since induced local 
isomorphisms between $d(\Mf)\restr \brck{w_u}$
and $d(\Nf)\restr \brck{v_u}$ are uniquely determined.
As for the requirements that these choices for $v_u$ must respect, 
compare 
(I1)--(I4) in relation to the question marks in Figure~\ref{maintaininginvaraint.1.figure}.

\medskip
So, more formally, we use a subordinate inductive process to take us
bottom-up through $T$, i.e.\ from the root $\lambda$ towards the leaves.
The stages of this subordinate induction involve initial segments
$U, U' \subset T$, starting at the root with $U := \{ \lambda \}$, and
successively treat a next node $u$ in $T \setminus U$ that is an 
immediate successor of some node already in $U$. So the induction step
takes us from an initial segment $U \subset T$ to an 
initial segment $U' \subset T$ that just extends $U$ by one next child
node $u$. Correspondingly, we think of $T_\im \cup U \subset T_i$
and $T_\im \cup U' \subset T_i$ as consecutive initial segments of the tree $T_i$.
By Observation~\ref{inittreeextobs}, these initial segments represent 
tree decompositions of $m_i$-closed initial segments of $d(\Mf)\restr \qmi$,
with isomorphic matches in tree decompositions of $m_i$-closed initial segments
of $d(\Nf)\restr \qni$.
Therefore the natural analogues of the constraints (I1)--(I4) are meaningful and
can be maintained step by step throughout this subordinate
induction that extends $d(\Nf) \restr \qnim$ and $\Nf_\im$ through
\bre
  \item
   bags $\dn(u) \subset d(V)$ to be joined to $\qni$,
  \item
   elements $\dhn(u) \in V$ to be joined to $\Nf_i$,
   \ere
linked according to $\dn(u) \subset \brck{\dhn(u)}$, 
so that
\bre
\item
  initial segments of the desired tree decomposition $\Tni$ of
$d(\Nf)\restr \qni$ are isomorphic to corresponding segments of $\Tmi$
(cf.\ (I2) and (I3)),
\item
  serve as dual representations of substructures 
 of the desired $\Nf_i$ that are isomorphic to  corresponding
 substructures of $\Mf_i$
 (cf.\ (I1) and (I4)). 
\ere

These extension steps are governed by the choices of $v_u := \dhn(u)$
in $\Nf$, which then fully determine the choices for 
$\dn(u)$. For the subordnate induction we require
\[
 \Nf,v_u\sim^\ell \Mf, w_u \mbox{ for } \ell = \ell_\im
 -\mathrm{depth}(u),
 \]
which in particular guarantees that $\Nf,v_u \sim^{\ell_i} \Mf,w_u$
since we made sure that $\ell_\im \geq \ell_i + \mathrm{depth}(T)$.

\medskip
At the root, for~$u = \lambda$, we use that
$\Mf,w_\lambda = \Mf,w_{u_0} \sim^{\ell_\im} \Nf ,v_{u_0} = \Nf,v_\lambda$.
So putting $v_\lambda:= v_{u_0} = \sigma_\im(w_{u_0}) = \sigma_\im(w_\lambda)$
respects~(I1), and $\dn(\lambda) \subset \dnim(u_0)$ is determined as
the exact match for $\dm(\lambda) \subset \dmim(u_0)$, automatically
in line with (I2)--(I4). This settles the base case for the
subordinate induction, with initial segment $U = \{ \lambda \}$ of
$T$, or $T_\im \cup \{ \lambda \}$ of $T_i$. 

\medskip
The induction step treats some next child to extend
the initial segment $U$ of $T$ by one new element $u$. Due to the 
uniformity of pre- and post-conditions in the generic extension step,
the extension step for a first child~$u$ of the root $\lambda$ 
is entirely typical and immediately generalises to all further
extension steps. So let $u \in T$ be a child of $\lambda$, 
$\mathrm{depth}(u) = 1$ in~$T$.

We need to find a suitable world~$v_u := \dhn(u) \in V$ with
$\Nf,v_u\sim^{\ell_\im-1}\Mf,w_u$
in order to extend $\Nf_\im$, $\qnim$, and $\Tnim$ accordingly:
\bre
\item[--]
choose $v_u \in \Nf$ to 
extend~$\dhnim$ towards $\dhni$ 
by $\dhni \colon  u\mapsto v_u$,
\item[--]
put~$[v_u]_\beta \in \qn$ for every 
$[w_u]_\beta\in \qm$ to extend $\qnim$ towards $\qni$,
\item[--]
put a bag $\dn(u)$ comprising these $[v_u]_\beta$ 
to extend $\dnim$ towards $\dni$.
\ere

\medskip
The choice of $v_u$ therefore is the crucial step,
and $v_u$ must not just be an element of 
the $\alpha$-class of $v_\lambda$ for $\alpha = \ag(w_\lambda,w_u)$
of the right $(\ell_\im-1)$-bisimulation type. 
Bad choices could still violate the $m_i$-closure condition 
for~$\qni$ or the isomorphism condition relating~$\Nf_i$ to~$\Mf_i$.
The following therefore need to be guaranteed:
\bae
\item
$\ag(v_\lambda,v_u) = \ag(w_\lambda,w_u)$ and
$\ag(v_s,v_u) = \ag(w_s,w_u)$ for all $s\in T_\im$,
\item
  $\qnim \cup \dn(u)$ is $m_i$-closed in $d(\Nf)$
  just as $\qmi \cup \dm(u)$ is in $d(\Mf)$.
\eae

\medskip\noindent
\emph{Remark.} 
Any violation of condition~(a) would immediately 
spoil the isomorphism condition on~$\sigma_i$ for
$\ag(v_s,v_u)$-edges. Violation of the $m_i$-closure condition in~(b), 
a requirement for the invariant by~(I2), could be exploited by player~$\PI$
in the continuation of the game if short paths in one configuration 
cannot be matched in the other. 
To make this problem more explicit, consider 
a choice of~$v_u$ such that
$\ag(v_s,v_u) = \ag(w_s,w_u)$ for all $s\in T_\im$.
Since~$\mathcal{T}$
is a tree decomposition
and~$u$ is a child of~$\lambda$, the bag $\dmi(u)$
intersects bags of~$\Tmim$ only within~$\dmi(\lambda)$,
i.e. $\dmi(s)\cap\dmi(u)\subseteq\dmi(\lambda)\cap\dmi(u)$,
for all $s\in T_\im$. Together with~$\qmi$ being 2-closed this
implies $\brck{w_s}\cap\brck{w_u}\subseteq
\brck{w_\lambda}\cap\brck{w_u}$.%
\footnote{A vertex $[w_u]_\alpha \in \dm(u) \setminus
  \dm(\lambda)$ cannot be directly edge-related in $G(d(\Mf))$ to any vertex
  in $\dmim(s)$, due to the connectivity constraint in
  $\Tmi$; so elements of $\brck{w_s}\cap\brck{w_u}$
  lie on chordless paths of length~$2$ between these vertices,
  hence are in $Q_i$ and, by connectivity in $\Tmi$,
  in $\dm(\lambda)$.}
As $\brck{v_s}\cap\brck{v_u} = \{ [v_u]_\alpha \colon \alpha \supset
\ag(v_u,v_s) \} =  \{ [v_u]_\alpha \colon \alpha \supset
\ag(w_u,w_s) \}$ (and similarly for $\lambda$ in the
place of $s$) by assumption, this implies that 
$\brck{v_s}\cap\brck{v_u}\subseteq\brck{v_\lambda}\cap\brck{v_u}$ 
for all $s\in T_\im \cup \{ \lambda \}$.
We next need to add a vertex $[v_u]_\beta$ 
into~$\qni$ if and only if $[w_u]_\beta\in \qmi$.
This, however, might result in a set that is \emph{not} $m_i$-closed.
Since~$\qmi$ is $m_i$-closed, there are no short paths of length
up to~$m_i$ from~$\qmim$ to~$\dmi(u)\setminus\dmi(\lambda)$
that leave~$\qmi$. And all such paths need to pass 
through~$\dmi(u)\cap\dmi(\lambda)$
since~$\Tmi$ is a tree decomposition.
Hence, for $t=\dmi(u)\cap\dmi(\lambda)$,
we have $d_t(\dmi(u),\qmim)>m_i$ on the side of $\Mf$ and
$d(\Mf)$, which must be matched on the side of
$\Nf$ and $d(\Nf)$.

\medskip
The key to overcoming these problems, i.e.\ to guarantee choices 
satisfying~(a) and~(b), lies in freeness (cf.\ Definition~\ref{def:freeness}).
Since we assumed~\Mf and~\Nf to be sufficiently acyclic and 
rich, both structures are sufficiently free 
by Theorem~\ref{thm:freeness}.
Let~$v'$ be some world in~$[v_\lambda]_\alpha$
that is $\ell$-bisimilar to~$w_u$ for $\ell = \ell_\im-1$, and
let $\zbar := \image(\dhnim)=\{ v_s : s\in T_\im \}$. 
Then freeness of~\Nf implies that
there is some $v_u\sim v'$ such that
\bre 
 \item[--] $\ag(v_\lambda,v_u)=\alpha=\ag(w_\lambda,w_u)$, and
 \item[--] $(\zbar,v_\lambda)\bot_{m_i} v_u$,
   i.e.\ $(\zbar,v_\lambda)$ and~$v_u$ are $m_i$-free.
\ere 

\hnt
This world~$v_u$ is a suitable choice for the extension of~$\Nf_\im$
towards $\Nf_i$, with the corresponding extension of $\qnim$ towards 
$\qni$. We put 
\[
\barr{r@{\;:=\;}l}
\dhni(u) & v_u =: \sigma_i(w_u), 
\\
\hnt
\dni(u) & \{ [v_u]_\beta : [w_u]_\beta\in\dmi(u) \},
\earr
\]
and add the vertices from $\dni(u)$ to $\qni$. 
Then we have,  
for $t=\brck{v_\lambda}\cap\brck{v_u}$, 
and since $(\zbar,v_\lambda)\bot_{m_i} v_u$,
that
\[
d_t(\brck{v_u},\bigcup \brck{\zbar})>m_i
\; \mbox{ and } \;  d_t(\dni(u),\bigcup \image(\dnim))> m_i,
\]
because $\dni(u)\subseteq\brck{v_u}$ and
$\bigcup \image(\dnim)\subseteq \bigcup \brck{\zbar}$.
This implies that 
\[
\bigcup \image(\dnim) \cup \dni(u)
\]
is $m_i$-closed.
Furthermore,
\begin{align*}
 &d_t(\brck{v_u},\bigcup \brck{\zbar})>1 \\
 \Rightarrow \quad
 &\brck{z}\cap\brck{v_u} \subseteq \brck{v_\lambda}\cap\brck{v_u}, \mbox{ for all $z\in\zbar$}, \\
 \Rightarrow \quad
 &\ag(z,v_u) \supseteq \ag(v_\lambda,v_u), \mbox{ for all $z\in\zbar$}.
\end{align*}

Together with $\ag(v_\lambda,v_s)=\ag(w_\lambda,w_s)$, for all $s\in T_\im$,
and 
$\ag(v_\lambda,v_u)=\ag(w_\lambda,w_u)$ we obtain
$\ag(v_u,v_s)=\ag(w_u,w_s),$
for all $s\in T_\im$.

This means that, in the subordinate induction step, here exemplified
in the passage from $U = \{ \lambda \}$ to $U' = \{\lambda,u \}$
for a child $u$ of $\lambda$, we maintain the conditions (I1)--(I4)
for the invariant. To summarise:   
joining $\dn(u) = \{ [v_u]_\beta : [w_u]_\beta\in\delta_i(u) \}$
to~$\qnim$ and defining $\sigma_i(w_u):=v_u = \dhni(u)$ 
conforms to the isomorphism conditions for $\sigma_i$ on $\Mf$ and $\Nf$ 
and for (tree decompositions
of) initial segments of $d(\Mf)\restr \qmi$ and $d(\Nf)\restr \qni$.

The remainder of the tree~$T$ is treated in the same way.
As new nodes $\dn(u) \in d(\Nf)$ for $u \in
T$ are added to $\qni$, the associated 
new vertices $\dhni(u) := v_u$ are added into $\Nf_i$ 
and incorporated into~$\zbar$ for the next extension treating a new 
node $u \in T$.
The distinguished world~$z_0$
of the pointed set $(\zbar,z_0)$
is the world that is associated with the parent of the node in $T$
that is to be processed. The freeness argument works
for the whole tree~$T$, node by node, because 
the size of~$Q$ and the depth of $\mathcal{T}$ are a-priori bounded, 
and~\Mf and~\Nf\ could be guaranteed to be sufficiently free for this argument
through all $q$ rounds. In particular, 
the bound~$f_{m_i}(|\tau|+1)$ on the size of~$Q$ 
translates into a bound on the depth of~$T$ 
(cf.\ discussion in connection with Definition~\ref{treedecompdef})
so that 
$\Mf,w_\lambda \sim^{\ell_\im} \Nf,v_\lambda$
and
$\ell_\im = \ell_i+  f_{m_i}(|\tau|+1)$, guarantee that 
$\Mf,w \sim^{\ell_i}\Nf,\sigma_i(w)$ holds for all $w\in\Mf_i$. 

Completion of this construction for
round~$i$ in particular yields the actual response for player~$\PII$,
viz.\ the placement of the pebble on 
$v_i \in \Nf_i \subset \Nf$ in response to 
player~$\PI$'s pebble placement on $w_i \in \Mf_i \subset \Mf$. 
So player~$\PII$ can maintain the invariant
in the $i$-th round. The following lemma summarises this. 

\begin{lemma}\label{le:preserve}
 Let $q\in\N$, and $\Mf,w_0$ and $\Nf,v_0$ be pointed \ca
 structures that are sufficiently acyclic and sufficiently rich.
 Given the invariant described in Section~\ref{sec:invariant}
after the $(i-1)$-th round of the $q$-round \EF game
 on $\Mf,w_0$ and $\Nf,v_0$, player~$\PII$ has a strategy to
update and maintain this invariant in the $i$-th round.
\end{lemma}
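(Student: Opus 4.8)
The plan is to prove the inductive step directly, consolidating the analysis developed throughout this section into player \PII's strategy for a single round. Assume the invariant holds after round $\im$ and that \PI moves to some $w_i \in W$ not already in $\Mf_\im$; the symmetric case is identical. First I would settle the \Mf-side, which is forced by \PI's choice: put $Q_i := \mathrm{cl}_{m_i}(Q_\im \cup \{[w_i]_\emptyset\})$. Since $[w_i]_\emptyset \notin Q_\im$, since any two vertices of $d(\Mf)$ lie at distance at most $2 \leq m_i$, and since $d(\Mf)$ is sufficiently acyclic, Lemma~\ref{le:closureAdd} applies: the attachment region $D := Q_\im \cap N^1(Q_i \setminus Q_\im)$ is a clique (using $m_\im = 2m_i+1$), and $Q_i = Q_\im \cup \mathrm{cl}_{m_i}(D \cup \{[w_i]_\emptyset\})$. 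By Lemma~\ref{le:closureSize} the new part has size at most $f_{m_i}(|\tau|+1)$, so $d(\Mf)\restr Q_i$ is acyclic and admits a tree decomposition $\mathcal{T}_i$ extending $\mathcal{T}_\im$; as $D$ is a clique it lies in a single bag, so the new material attaches below one node $\lambda$. I then extend $\hat{\delta}_\im$ to $\hat{\delta}_i$ and let $\Mf_i$ be induced by $\image(\hat{\delta}_i)$.

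The substance of the argument is transferring the new sub-tree to the \Nf-side, processed vertex by vertex in breadth-first order from $\lambda$. For the relevant substructure I would write the characteristic $\ML$-formula $\varphi_{\mathcal{T}}$ built from the $\chi_u$ (the $\ell_i$-bisimulation types of the $w_u$) guarded by the modalities $\Diamond_{\ag(w_u,w_{u_j})}$. Its modal depth is bounded by $\ell_i$ plus the depth of $T$, hence by $\ell_\im = \ell_i + f_{m_i}(|\tau|+1)$, and this is precisely why $\Mf,w_\lambda \sim^{\ell_\im} \Nf,v_\lambda$ lets me pull $\varphi_{\mathcal{T}}$ across to $v_\lambda := \sigma_\im(w_\lambda)$. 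Reading off the subformulae $\Diamond_\alpha \varphi_{\mathcal{T},u}$ with $\alpha = \ag(w_\lambda,w_u)$, I obtain for each new $u$ an $\ell_i$-bisimilar candidate $v' \in [v_\lambda]_\alpha$ for $w_u$.

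The hard part --- and the reason the freeness theorem was established --- is that this raw candidate need not respect the two constraints that together keep the invariant alive: the \emph{isomorphism} constraint $\ag(v_s,v_u) = \ag(w_s,w_u)$ for all already-matched $s$, and the \emph{closure} constraint that adding the classes $[v_u]_\beta$ keeps $Q'_i$ $m_i$-closed. I would resolve both at once by invoking Theorem~\ref{thm:freeness} with $\zbar := \image(\hat{\delta}_\im)$ and distinguished world $v_\lambda$: sufficient acyclicity and richness yield $(m_i,|\zbar|)$-freeness, so there is $v_u \sim v'$ with $\ag(v_\lambda,v_u) = \alpha$ and $(\zbar,v_\lambda) \bot_{m_i} v_u$. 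The distance bound $d_t(\brck{v_u}, \bigcup\brck{\zbar}) > m_i$ (for $t = \brck{v_\lambda}\cap\brck{v_u}$) gives $m_i$-closedness of the extended $Q'_i$, while its weakening $d_t(\cdot,\cdot) > 1$ together with Lemma~\ref{le:tProp} yields $\ag(z,v_u) \supseteq \ag(v_\lambda,v_u)$ for all $z \in \zbar$; combined with the inductively matched $\ag(v_\lambda,v_s) = \ag(w_\lambda,w_s)$ and $\ag(v_\lambda,v_u) = \ag(w_\lambda,w_u)$ this delivers $\ag(v_u,v_s) = \ag(w_u,w_s)$ for all $s$, i.e.\ the isomorphism constraint. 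Setting $\sigma_i(w_u) := v_u$ and extending $Q'_i$, $\mathcal{T}'_i$ and $\hat{\delta}'_i$ by the matched classes preserves every clause of the invariant for the part processed so far.

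Finally I would iterate this over all of $T$, at each vertex enlarging $\zbar$ by the worlds placed so far and taking the distinguished world to be the image of the current vertex's parent. Because $|T|$ is bounded by $f_{m_i}(|\tau|+1)$, the freeness hypotheses stay within a budget fixed in advance by $q$, so the argument goes through uniformly; and the depth bound $\ell_\im = \ell_i + f_{m_i}(|\tau|+1)$ guarantees $\Mf,w \sim^{\ell_i} \Nf,\sigma_i(w)$ for every $w \in \Mf_i$. This completes the round and hence the inductive step.
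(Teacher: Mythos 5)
Your proposal is correct and follows essentially the same route as the paper's own argument: the forced $\Mf$-side update via Lemmas~\ref{le:closureAdd} and~\ref{le:closureSize}, the transfer to $\Nf$ by the characteristic formula $\varphi_{\mathcal{T}}$ of modal depth at most $\ell_\im = \ell_i + f_{m_i}(|\tau|+1)$, and the breadth-first correction of each raw candidate via the freeness theorem to secure both the isomorphism constraint on $\ag(\cdot,\cdot)$ and the $m_i$-closure of $Q'_i$. The bookkeeping (growing $\zbar$, taking the parent's image as distinguished world, bounding everything by $f_{m_i}(|\tau|+1)$) matches the paper's treatment.
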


\subsection{Upgrading and characterisation}\label{sec:upgrading}

This section can be regarded as the culmination of the work so far:
the upgrading theorem and the characterisation of basic modal
logic over (finite) \ca structures.
The elements of structural analysis developed so far
contain all the building blocks for proving those two theorems.
We speak of \emph{two} theorems because the restriction to finite models 
and the unconstrained classical reading are a priori independent.
Proving either one does not entail the other even though our 
specific proof method allows us to treat the two versions in parallel.
In Sections~\ref{mainlemsubsection} and~\ref{sec:cosetStructres}
we showed that every
(finite) \CK structure can be covered by a bisimilar 
(finite) \ca structure that is arbitrarily acyclic and 
arbitrarily rich. Recall that the (finite) bisimilar coverings by \ca structures 
from Lemma~\ref{CayleyCKlemproper} were boosted to 
(finite) bisimilar coverings by \ca structures satisfying 
additional acyclicity and richness requirements in 
Lemma~\ref{bettercoveringlem}. 
The main result of Section~\ref{sec:structureTheory},  
the freeness theorem, then further showed that 
sufficient degrees of acyclicity and richness
imply $(m,k)$-freeness, a special property of 
suitable \ca structures that is essential for the upgrading.
In particular we see that not just \ca structures but even
\ca structures of any given finite degree of acyclicity,
richness and freeness can, up to bisimulation,
be taken as universal representatives of all (finite) \CK structures.

Finally, the previous development in the current
section has provided an invariant that affords player~\PII
a winning strategy in the $q$-round \EF game on sufficiently free 
pointed \ca structures that are $\ell$-bisimilar for some sufficiently
large $\ell$. The upgrading theorem follows easily from that.

\begin{theorem}[upgrading theorem]\label{thm:upgrading}
Let~$q\in\N$. For some suitable choice of $\ell = \ell(q)$, any
sufficiently acyclic and sufficiently rich \ca structures $\Mf$ and~$\Nf$ satisfy
\[ 
\Mf,w \sim^\ell \Nf,v \quad \Rightarrow \quad \Mf,w\equiv_q \Nf,v.
\]
\end{theorem}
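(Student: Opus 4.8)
The plan is to show that player~\PII has a winning strategy in the $q$-round first-order \EF game on $\Mf,w$ and $\Nf,v$; by the classical \EF theorem this is equivalent to the desired $\Mf,w \equiv_q \Nf,v$. Essentially all of the game-theoretic content has already been prepared in Section~\ref{sec:invariant}, so the proof reduces to fixing the parameters, verifying the opening position, and then iterating Lemma~\ref{le:preserve} through the $q$ rounds.

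First I would fix, for the given~$q$, the two finite decreasing sequences $(m_i)_{0\leq i\leq q}$ and $(\ell_i)_{0\leq i\leq q}$ exactly as in Section~\ref{sec:invariant}: $m_q := 2$ and $m_{i-1} := 2m_i+1$, together with $\ell_q := 1$ and $\ell_{i-1} := \ell_i + f_{m_i}(|\tau|+1)$; then set $\ell := \ell_0$. The crucial bookkeeping observation is that, since these sequences are finite and depend on~$q$ alone, every threshold they impose is uniformly bounded in terms of~$q$: the largest critical distance~$m_0$, the closure-size bounds $f_{m_i}(|\tau|+1)$ furnished by Lemma~\ref{le:closureSize}, and hence the sizes of the pointed sets to which freeness is applied in any round. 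This is what allows a \emph{single} choice of ``sufficiently acyclic'' and ``sufficiently rich'' to serve for the entire game, so that by the freeness theorem (Theorem~\ref{thm:freeness}) both $\Mf$ and $\Nf$ are $(m_i,k)$-free for each round~$i$ and each relevant~$k$.

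For the base case I would check that the invariant holds before the first round, with $w_0 := w$ and $v_0 := v$: take $\sigma_0 = \{w \mapsto v\}$, the singleton dual sets $Q_0 = \{[w]_\emptyset\}$ and $Q'_0 = \{[v]_\emptyset\}$ each decomposed by a one-bag tree, and $\Mf_0,\Nf_0$ the single pebbled worlds. The bisimulation clause $\Mf,w \sim^{\ell_0} \Nf,v$ of the invariant is then precisely the hypothesis $\Mf,w \sim^{\ell} \Nf,v$; in particular propositional equivalence of $w$ and $v$ holds. The inductive step is exactly Lemma~\ref{le:preserve}: assuming the invariant after round $i-1$, player~\PII has a strategy to restore it after round~$i$. (If \PI moves to a world already present in $\Mf_{i-1}$, \PII answers with its $\sigma_{i-1}$-image and all data are unchanged; the substantive case of a genuinely new world is handled by the freeness-based extension described in Section~\ref{sec:invariant}.)

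After round~$q$ the invariant delivers an isomorphism $\sigma_q \colon \Mf_q,w_0,\ldots,w_q \simeq \Nf_q,v_0,\ldots,v_q$ of induced substructures matching the pebbled positions, so $(w_i \mapsto v_i)_{0\leq i\leq q}$ is a partial isomorphism and \PII has won the game. I do not expect any real difficulty in this final assembly: the hard work is entirely absorbed into Lemma~\ref{le:preserve} and, beneath it, into the freeness theorem. The one point that genuinely demands care is the parameter bookkeeping indicated above, namely verifying that the finitely many acyclicity, richness, and freeness demands arising across the $q$ rounds are all met by fixed thresholds that depend only on~$q$.
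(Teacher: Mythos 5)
Your proposal is correct and follows essentially the same route as the paper's own proof: fix $\ell:=\ell_0$ from the sequences of Section~\ref{sec:invariant}, verify the invariant in the initial position, iterate Lemma~\ref{le:preserve} for $q$ rounds, and read off the partial isomorphism $\sigma_q$. Your added remarks on the uniform parameter bookkeeping and on \PI\ re-pebbling a world already in $\Mf_{i-1}$ are consistent with how the paper handles these points implicitly.
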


\begin{proof}
Let $(\ell_k)_{0\leq k\leq q}$ be the sequence of the same
name from Section~\ref{sec:invariant}. Set $\ell:=\ell_0$
and let $\Mf,w$, $\Nf,v$ be two sufficiently acyclic
and rich pointed \ca structures such that
$\Mf,w \sim^\ell \Nf,v$. In order to prove that these
structures are $\FO_q$-equivalent we provide a winning
strategy for player~\PII in the $q$-round \EF game
on~$\Mf,w$ and~$\Nf,v$. Her strategy is to preserve
the invariant from Section~\ref{sec:invariant} according 
to Lemma~\ref{le:preserve}. 

We need to check that the invariant can be set up before the first round.
That $\Mf,w\sim^\ell\Nf,v$ implies
that the substructures 
$\Mf_0 := \Mf\restr \{w\}$ and
$\Nf_0 := \Nf\restr \{v\}$ are isomorphic 
via $\sigma_0=\{w\mapsto v\}$
($w$ and~$v$ are atomically equivalent
and all accessibility relations are reflexive), and that 
$\sigma_0$ respects $\sim^{\ell_0}$. 
The singleton subsets
$\qm_0 :=\{[w]_\emptyset\}$ in $d(\Mf)$ and
$\qn_0 :=\{[v]_\emptyset\}$ in $d(\Nf)$ are $m_0$-closed 
because the \ca structures, and with
them their dual hypergraphs by Lemma~\ref{acycCayleyhyplem},
are sufficiently acyclic. The induced sub-hypergraphs
$d(\Mf)\upharpoonright \qm_0$ and $d(\Nf)\upharpoonright \qn_0$
are trivially isomorphic and tree decomposable.
By Lemma~\ref{le:preserve} player~$\PII$ can therefore
maintain the invariant through all $q$~rounds, which implies that she
wins the game. In the end, no matter what the moves of player~\PI,
the pebble placements are related by the isomorphism  
$\sigma_q$ between induced substructures~$\Mf_q$ and~$\Nf_q$.
\end{proof}

The upgrading theorem, together with the existence of
suitable bisimilar coverings, implies our main result,
the characterisation of~\ML as the bisimulation-invariant 
fragment of $\FO$ over the non-elementary classes of 
all CK-structures and of all finite CK-structures, respectively. 

\begin{theorem}[main theorem]\label{thm:main} 
Over the class of (finite)
\ca structures, and hence over the class of (finite) CK-structures:
$$\MLCK \equiv \ML \equiv \FO/{\sim}$$
\end{theorem}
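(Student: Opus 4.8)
The plan is to reduce the characterisation to the compactness-style equivalence $(\dagger)$ from Section~\ref{upgradesec} and then read off a modal formula via the modal \EF theorem. I would first observe that $\MLCK \equiv \ML$ over Cayley (and hence CK-)structures is essentially definitional: on a CK-structure every derived relation $R_\alpha$ is present explicitly, so an $\MLCK$-formula evaluated on an \Sfive structure through its CK-expansion is literally a basic modal formula with $\tau$-modalities over the associated CK-structure. It then suffices to establish $\ML \equiv \FO/{\sim}$ over Cayley structures and to transfer it to CK-structures. One inclusion is routine: each $\ML$-formula has a standard first-order translation and is bisimulation-invariant by Theorem~\ref{EFthm}, giving $\ML \subseteq \FO/{\sim}$.

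For the converse I would take a $\sim$-invariant $\varphi \in \FO$ over (finite) CK-structures of quantifier rank $q$ and let $\ell = \ell(q)$ be the level from the upgrading theorem (Theorem~\ref{thm:upgrading}); the goal is to show $\varphi$ is $\sim^\ell$-invariant, which is the nontrivial half of $(\dagger)$ (the other half being immediate from $\sim \,\subseteq\, \sim^\ell$). Since bisimulation only probes the connected component of the base point, I may assume connectedness. Given connected (finite) CK-structures with $\Mf,w \sim^\ell \Nf,v$, I would apply Lemma~\ref{bettercoveringlem} to obtain (finite) bisimilar coverings $\pi\colon\Mf^\ast\to\Mf$ and $\pi'\colon\Nf^\ast\to\Nf$ by Cayley structures that are $n$-acyclic and $k$-rich for the degrees $n,k$ that Theorem~\ref{thm:upgrading} requires, and pick $w^\ast,v^\ast$ lying above $w,v$.

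The key step is to chain bisimulations through the coarse level $\sim^\ell$. From $\Mf^\ast,w^\ast \sim \Mf,w$ and $\Nf^\ast,v^\ast \sim \Nf,v$ (coverings are bisimilar), the hypothesis $\Mf,w\sim^\ell\Nf,v$, and $\sim \,\subseteq\, \sim^\ell$, transitivity of $\sim^\ell$ gives $\Mf^\ast,w^\ast \sim^\ell \Nf^\ast,v^\ast$. The upgrading theorem then yields $\Mf^\ast,w^\ast \equiv_q \Nf^\ast,v^\ast$, so $\Mf^\ast,w^\ast \models \varphi$ iff $\Nf^\ast,v^\ast \models \varphi$. Finally $\sim$-invariance of $\varphi$ pulls truth back along the coverings, so $\Mf,w\models\varphi$ iff $\Nf,v\models\varphi$, establishing $\sim^\ell$-invariance and hence $(\dagger)$. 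By Theorem~\ref{EFthm}, $\sim^\ell$ coincides with $\equiv^\ML_\ell$; as this has finite index, any $\sim^\ell$-invariant $\varphi$ is equivalent to a Boolean combination of characteristic formulae $\chi^\ell_{\Mf,w}$, i.e.\ to an $\ML$-formula of depth $\ell$. Transferring back to CK-structures along the same coverings finishes the argument.

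The heavy lifting is already done by the upgrading theorem, so at this stage the work is careful bookkeeping rather than new mathematics. The point I expect to need most care is that the two covering bisimulations and the hypothesis must be composed at the level $\sim^\ell$ and not at $\sim$, which is sound precisely because $\sim$ refines $\sim^\ell$; and that the degrees $n,k$ must be fixed in advance as functions of $q$ so that one $\ell=\ell(q)$ works for all pairs at once. Both the classical and the finite-model-theory readings should go through uniformly, since Lemma~\ref{bettercoveringlem} provides finite coverings in the finite case.
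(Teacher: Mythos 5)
Your proposal is correct and follows essentially the same route as the paper: standard translation for one inclusion, and for expressive completeness the covering construction of Lemma~\ref{bettercoveringlem} combined with the upgrading theorem, chaining the two covering bisimulations with the $\sim^\ell$-hypothesis (soundly, at the coarser level $\sim^\ell$) and pulling truth of $\varphi$ back along the coverings via $\sim$-invariance, then invoking Theorem~\ref{EFthm}. Your explicit remarks on reducing to connected structures and on treating $\MLCK\equiv\ML$ as definitional over CK-structures are points the paper handles implicitly via its proviso on Cayley structures, but they do not change the argument.
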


\begin{proof}
The standard translation (cf.\ Section~\ref{introsec})
implies $\ML \subseteq \FO/{\sim}$.
For the crucial converse direction, establishing 
\emph{expressive completeness},
let~$\varphi$ be an $\FO$-formula
with $\mathrm{qr}(\varphi) = q$ that is bisimulation-invariant
over (finite) \ca structures.
If we can show that~$\varphi$ is $\sim^\ell$-invariant 
for some $\ell\in\N$ over (finite) \ca structures,
then there is an~\ML formula of modal depth~$\ell$ that is
logically equivalent to~$\varphi$ over (finite)
\ca structures (cf.\ Theorem~\ref{EFthm}).
 
We choose~$\ell=\ell(q)$ from 
Theorem~\ref{thm:upgrading} above,
and let~$\Mf,w$ and~$\Nf,v$ be pointed~\ca structures that are $\ell$-bisimilar
(compare Figure~\ref{fig:upgrade2}).
By Lemma~\ref{mainlemmarichness} (for the unrestricted reading) and 
Lemma~\ref{bettercoveringlem} (to cover the restriction to 
finite structures) there
are bisimilar coverings $\hat{\Mf},\hat{w}\sim\Mf,w$
and $\hat{\Nf},\hat{v}\sim\Nf,v$ that are sufficiently
acyclic and rich such that
Theorem~\ref{thm:upgrading} applies.
In particular,
Lemma~\ref{bettercoveringlem}
gives us such coverings that are
finite if~\Mf and~\Nf are finite.
Since in particular $\hat{\Mf},\hat{w} \sim^\ell \hat{\Nf},\hat{v}$,
Theorem~\ref{thm:upgrading} implies $\hat{\Mf},\hat{w}\equiv_q\hat{\Nf},\hat{v}$,
hence
\[
\barr{rcr@{\qquad}r}
 \Mf,w\models \varphi \;
 &\Leftrightarrow& \hat{\Mf},\hat{w}\models \varphi 
& \mbox{($\phi$ $\sim$-inv.)}\\
 &\Leftrightarrow& \hat{\Nf},\hat{v}\models \varphi 
& \mbox{($\mathrm{qr}(\phi) \leq q$)}\\
 &\Leftrightarrow& \Nf,v\models\varphi
& \mbox{($\phi$ $\sim$-inv.)}
\earr
\]
which implies $\sim^\ell$-invariance of $\varphi$ 
over (finite) \ca structures, as desired.
\end{proof}

\begin{figure}
\[
\xymatrix{
\Mf,w \ar@{-}[d]|{\rule{0pt}{1ex}\sim} \ar @{-}[rr]|{\;\sim^\ell\,}
&& \Nf,v \ar @{-}[d]|{\rule{0pt}{1ex}\sim}
\\
\hat{\Mf},\hat{w} \ar@{-}[rr]|{\rule{0pt}{1.5ex}\;\equiv_q}
&& \hat{\Nf},\hat{v}
}
\]
\caption{Upgrading $\sim^\ell$ to $\equiv_q$.}
\label{fig:upgrade2}
\end{figure}

\paragraph{Acknowledgement.}
We are particularly indebted to an anonymous referee whose attention
to detail, insightful corrections and suggestions have led to essential
improvements in this revision.

\nocite{Ca18}

\newpage

\section*{Appendix}

\textbf{Lemma~\ref{le:multiRunning}.} 
\textit{Let $m\in\N$,
\Mf be a \ca structure,
$v$ a world,
$(\zbar,z_0)$ a finite pointed set,
$\ybar\subset\zbar$ a possibly empty subset and
$w\in\zbar\setminus\ybar$;
set $\gamma=\ag(z_0,v)$ and $t=\rho(v,\gamma)$.
Assume that~\Mf is sufficiently acyclic and
sufficiently rich, and
\bre 
 \item[--]
  $\gamma\subseteq\ag(z,v)$, for all $z\in\zbar$;
 \item[--]
  $d_t(\ybar,v)>m$.
\ere 
Then there is a world~$v^*\in[v]_\gamma$ with
$\Mf,v^*\sim \Mf,v$ and $\ag(z,v^*)=\ag(z,v)$, for all $z\in\zbar$,
such that
$$d_t(\ybar \cup \{w\},v^*)>m.$$}

\begin{proof}
  If $d_t(w,v)>m$, simply set $v^*=v$. Otherwise there is some
  $1<\ell\leq m$ such that $d_t(w,v)\leq\ell$. 
  Since~\Mf is sufficiently acyclic there is
  an inner coset path of length $\ell$ from~$w$ to~$v$ that avoids~$t$
  but no such path of length $<\ell$.
  We need to show that there is a suitable world~$v^*$ such that
  there is no inner coset path of length up to~$\ell$ from~$w$
  to~$v^*$ that avoids~$t$.
  Then the statement follows from repeated
  application of the same argument.
  
\medskip\noindent
\emph{Proof outline.}
  We inductively find a sequence of worlds $(v_n)_{n\geq 1}$ in $[v]_\gamma$
  that are bisimilar to~$v$, along with three auxiliary sequences: two
  sequences of sets of agents $(\beta_n)_{n\geq 1},(\gamma_n)_{n\geq
    1}$ and a sequence of agents $(a_n)_{n\geq 1}$
   in~$\gamma = \ag(z_0,v)$.  

  We show that these sequences terminate after finitely many steps
  and that the last one of the~$v_n$ can serve as the desired world~$v^*$.
  Intuitively, every~$v_n$ will be, in some sense, further away
  from~$v$ than its predecessor $v_{n-1}$;
  $\beta_n$ describes the direction back to~$w$ on short paths
  that avoid~$t$; $\gamma_n$ the steps that still have to be taken
  to get far enough away from~$w$; and~$a_n$ is
  the direction we take to go from~$v_{n-1}$ to~$v_n$.
  
  To construct the sequences we need one more auxiliary statement
  that says that, as long as there is a short path
  from~$v$ to~$w$, we can move in a suitable direction to a copy
  $v'\sim v$ without \emph{decreasing} the distance to~$\ybar$,
  i.e.\ that we can move away from several worlds simultaneously.
  It is similar in spirit to Lemma~\ref{le:goodWorld}.
  
\medskip\noindent
\emph{Claim 1.} 
 Let \Mf be a \ca structure,
 $v$ a world,
 $(\zbar,z_0)$ a finite pointed set,
 $\ybar\subset\zbar$ a possibly empty subset and
 $w\in\zbar\setminus\ybar$;
 set $\gamma=\ag(z_0,v)$, $t=\rho(v,\gamma)$ and $m\geq 2$.
 Assume that~\Mf is sufficiently acyclic and
 sufficiently rich, and
 \bre 
  \item[--]
   $\gamma\subseteq\ag(z,v)$, for all $z\in\zbar$;
  \item[--] $d_t(\ybar,v)> m$ and
  $d_t(w,v)=\ell\leq m$.
 \ere 
 Then $\gamma\setminus\sh_t(v,w)\neq\emptyset$, and
 for every $a\in\gamma\setminus\sh_t(v,w)$ there is some
 $v'\in[v]_a\setminus\{v\}$ such that $\Mf,v'\sim \Mf,v$, and
 $$d_t(\ybar,v')> m.$$

\noindent
\emph{Proof of claim 1.}
 Let $w,\alpha_1,\dots,\alpha_\ell,v$
 be an inner non-$t$ coset path.
 In particular, this
 means $\gamma\nsubseteq\alpha_\ell$ and also
 $\gamma\nsubseteq\sh_t(v,w)$ since
 $\emptyset\neq\sh_t(v,w)\subseteq\alpha_\ell$.
 Thus, we obtain the first statement:
 $\gamma\setminus\sh_t(v,w)\neq\emptyset$.

 For the second statement,
 let $a\in\gamma\setminus\sh_t(v,w)$, $z\in\ybar$
 and assume there is some $u\in[v]_a$ and $k\leq m$
 such that there is a non-$t$ coset path
 $z,\beta_1,\dots,\beta_k,u$. 
We claim that $d_t(z,u')>m$
for $u'\in[v]_a\setminus\{u\}$.
Firstly, we show $a\not\in\sh_t(u,z)$. Assume $a\in\sh_t(u,z)$,
then $a\in\beta_k$ because $u,\beta_k,\dots,\beta_1,z$ is a short
non-$t$ coset path. It follows
$v\in[u]_a\subseteq[u]_{\beta_k}=[v]_{\beta_k}$
which means that $z,\beta_1,\dots,\beta_k,v$ or
$z,\beta_1,\dots,\beta_\km,v$ is a short non-$t$ coset path.
This implies $d_t(z,v)\leq m$ contrary to $d_t(\ybar,v)> m$,
so $a\not\in\sh_t(u,z)$.
Secondly, $d_t(z,u')\leq m$ implies
$a\in\sh_t(u',z)$ (Lemma~\ref{le:stepAwayT}),
which again implies $d_t(z,v)\leq m$, contrary to assumption.
 
 Thus, for any $z\in\ybar$ there is at most one
 $u_z\in[v]_a$ such that $d_t(z,u_z)\leq m$.
 Since~\Mf is sufficiently rich,
 there remains a world $v'\in[v]_a\setminus\{v\}$
 such that $v'\sim v$ and $d_t(\ybar,v')>m$.
 
 {\raggedleft \emph{End of proof of claim 1.}
 
 }

\medskip
\medskip\noindent
\emph{The construction.}
  For $n=1$, Claim~1 implies $\gamma\setminus\sh_t(v,w)\neq\emptyset$;
  let $a_1\in\gamma\setminus\sh_t(v,w)$.
  As~\Mf is sufficiently rich, there is a world $v_1\in[v]_{a_1}\setminus\{v\}$
  that is bisimilar to~$v$ such that
  $\ag(v_1,z)=\ag(v,z)$, for all $z\in\zbar$, and
  $d_t(\ybar,v_1)>m$ (cf.\ Lemma~\ref{le:goodWorld} and Claim~1).
  If $d_t(w,v_1)\leq\ell$, set $\beta_1:=\sh_t(v_1,w)$ and
  $\gamma_1:= (\gamma\setminus\sh_t(v,w))\setminus \beta_1$.
  If $d_t(w,v_1)>\ell$ or $\gamma_1=\emptyset$,
  then the sequence terminates in~$v_1$.
  
  \medskip
  For $n>1$, assume that the worlds $v_1,\dots,v_{n-1}$  and the sets
  $\beta_1,\dots,\beta_{n-1}$, $\gamma_1,\dots,\gamma_{n-1}$ have been
  defined and that the sets are non-empty.
  Let $a_n\in \gamma_{n-1}$. Since~\Mf is sufficiently rich,
  by Lemma~\ref{le:goodWorld} and Claim~1 there is again
  a $v_n\in[v_{n-1}]_{a_n}\setminus\{v_{n-1}\}$, 
  bisimilar to~$v_{n-1}$ such that $\ag(v_n,z)=\ag(v_{n-1},z)$,
  for all $z\in\zbar$, and $d_t(\ybar,v_n)>m$.
  If $d_t(w,v_n)\leq\ell$, set $\beta_n:=\sh_t(v_n,w)$,
  and $\gamma_n:= \gamma_{n-1}\setminus \beta_n$.
  If $d_t(w,v_n)>\ell$ or $\gamma_n=\emptyset$,
  then~$v_n$ is the last world and the sequence terminates in~$v_n$.
  
  \medskip
 We have constructed these four finite sequences: 
\[
\barr{ll}
(v_n)_{n\geq 1} \in [v]_{\gamma} \subset [v]_{\ag(w,v)} & \mbox{ all bisimilar to~$v$}; 
\\
(a_n)_{n\geq 1} \in \Gamma;
\\  
(\beta_n)_{n\geq 1} \in \tau;
\\
(\gamma_n)_{n\geq 1} \in \tau.
\earr
\]
Additionally set $v_0:= v$, $\beta_0:=\sh_t(v,w)$ and $\gamma_0:=\gamma\setminus\beta_0$.

\medskip\noindent
\emph{Correctness.} 
  We show the following properties of the sequences by induction
  on~$n\geq 1$.
  \begin{enumerate}
   \item[(1)]
     $\beta_n=\{a_j,a_{j+1},\dots, a_n\}$, for some $1\leq j\leq n$,
     or $\beta_n \supseteq \beta_0\cup\{a_1,\dots,a_n\}$.
   \item[(2)]
     The worlds $v_0,\dots,v_n$ occur on every short inner coset
     path that avoids~$t$ from~$w$ to~$v_n$ in the order of their indices:
     let $w_1,\alpha_1,w_2,\dots,w_k,\alpha_k,w_{k + 1}$
     be such a path, and $0\leq i<j \leq n$. If~$1\leq k_i, k_j\leq k$
     are minimal such that $v_i\in[w_{k_i}]_{\alpha_{k_i}}$
     and $v_j\in[w_{k_j}]_{\alpha_{k_j}}$, then $k_i\leq k_j$.
    \item[(3)]
     $\gamma_n\subsetneq \gamma_{n-1}$. 
  \end{enumerate}
  
  For $n=1$, ad~ (1) and~(2). Together with $a_1\in \gamma\setminus\sh_t(v,w)$ and
  $v_1\neq v_0$, Lemma~\ref{le:stepAwayT} implies $a_1\in \beta_1=\sh_t(v_1,w)$.
  For every inner short coset path
  \[ w= w_1,\alpha_1,w_2,\dots,w_k,\alpha_k,w_{k + 1}=v_1 \]
from $w$ to $v_1 \in [v_0]_{a_1}$ 
that avoids~$t$ we have $v_0\in [v_1]_{\alpha_k}$
  because $\alpha_k\supseteq\beta_1\ni a_1$.
  Furthermore, since~$k$ is the minimal index such that~$v_1\in[w_k]_{\alpha_k}$,
  the minimal index for~$v_0$ can only be smaller or equal.
  If there is one such path with $v_0\in [w_k]_{\alpha_{k-1}\cap \alpha_k}$,
  we have $\beta_1=\{a_1\}$, because 
  \[ w= w_1,\alpha_1,w_2,\dots,v_0,\{a_1\},w_{k + 1} =v_1\]
  would be a short inner coset path from~$w$ to~$v_1$.
  If $v_0\in [w_k]_{\alpha_k}\setminus [w_{k-1}]_{\alpha_{k-1}}$,
  for all short inner coset paths from~$w$ to~$v_1$, then
  $\beta_0=\sh_t(v_0,w)\subseteq \alpha_k$ since every such path is
  a short inner coset path from~$w$ to~$v_0$ that avoids~$t$.
  Thus, $\beta_1=\{a_1\}$ or $\beta_0\cup\{a_1\}\subseteq \beta_1$.
  
For $n=1$, ad~(3), note that  
  $\gamma_1 = (\gamma_0\setminus\beta_0)\setminus \beta_1 =
  \gamma_0\setminus(\beta_0\cup \beta_1)$ implies
  $\gamma_1\subseteq \gamma_0$, which 
  together with $a_1\in \gamma_0\cap \beta_1$ implies
  $\gamma_1\subsetneq \gamma_0$.
  
  \medskip
For $n>1$ inductively assume that properties~(1)--(3) hold for $1,\dots,n-1$.
 
For~$n$, ad~(1): we chose
  \[ 
    a_n\in \gamma_{n-1}=\gamma_{n-2}\setminus \beta_{n-1}=
    \gamma_{n-2}\setminus\sh(v_{n-1},w) \quad\mbox{and}\quad
    v_n\in [v_{n-1}]_{a_n}\setminus\{v_{n-1}\}.
  \]
  Lemma~\ref{le:stepAwayT}
  implies that $a_n\in \beta_n=\sh_t(v_n,w)$.
  If $\{a_1,\dots,a_n\}\subseteq \beta_n$,
  then $\beta_n=\{a_1,\dots,a_n\}$ or
  $\beta_0\cup\{a_1,\dots,a_n\}\subseteq \beta_n$,
  similar to the base case.
  If there is a $1\leq j<n$ such that $a_j\notin \beta_n$,
  let~$j$ be the largest such index. 
  Thus, there is a short inner coset path
  \[ w= w_1,\alpha_1,w_2,\dots,v_j,\{a_{j+1},\dots,a_n\},v_n \]
  from~$w$ to~$v_n$ that avoids~$t$, which implies
  $\beta_n=\{a_{j+1},\dots,a_n\}$. 
  
For~$n$, ad~(2):
  let $w_1,\alpha_1,w_2,\dots,w_k,\alpha_k,w_{k + 1}$
  be a short inner coset path from~$w$ to~$v_n$ that avoids~$t$.
  We showed $a_n\in\beta_n\subseteq\alpha_k$
  which implies $v_{n-1}\in[w_{k+1}]_{\alpha_k}$.
  So
  \[
   w= w_1,\alpha_1,w_2,\dots,w_k,\alpha_k,v_{n-1}
   \quad\mbox{or}\quad
   w= w_1,\alpha_1,w_2,\dots,w_{k-1},\alpha_{k-1},v_{n-1}
  \]
  is a short inner coset path from~$w$ to~$v_{n-1}$ that avoids~$t$.
  By induction hypothesis the worlds $v_0,\dots,v_{n-1}$ must occur
  on such a path in order of their indices. The smallest
  index~$i$ such that $v_n\in[w_i]_{\alpha_i}$ is~$k$. Thus,
  all worlds $v_0,\dots,v_{n-1}$ occur in equivalence
  classes~$[w_i]_{\alpha_i}$ with $i\leq k$.
  
For~$n$, property~(3) 
  follows from $\gamma_n=\gamma_{n-1}\setminus\beta_{n}$
  and the fact that $a_n\in \gamma_{n-1}\cap \beta_n$. 
  
  \medskip
  First of all, property~(3) implies that the four 
  sequences as constructed terminate after finitely many steps, 
  since there are only finitely many agents. If $v_k$ is the
  terminal world in the first sequence, we claim that $d_t(w,v_k)>\ell$:
    
  There cannot be an inner coset path that avoids~$t$
  of length $<\ell$ from~$w$
  to~$v_k$ because that would imply an inner coset path
  from~$w$ to~$v$ that avoids~$t$ of length $<\ell$ by property~(2),
  which cannot exist by assumption.
  Hence, for the sake of contradiction, we assume that
  there is an inner coset path of length~$\ell$
  \[
  w= w_1,\alpha_1,w_2,\dots,w_\ell,\alpha_\ell,w_{\ell + 1}=v_k
  \]
  from~$w=w_1$ to~$v_k=w_\ellp$ that avoids~$t$.
  Again, property~(2) implies that~$v$ occurs somewhere on this path.
  Furthermore, the smallest index~$i$ such that
  $v\in[w_i]_{\alpha_i}$ must be~$\ell$,
  otherwise there would be an inner coset path
  from~$w$ to~$v$ that avoids~$t$ of length $<\ell$.
  In particular,
  $v=v_0\in[w_\ell]_{\alpha_\ell}\setminus[w_\ellm]_{\alpha_\ellm}$.
  Property~(2) states that all worlds $v_1,\dots,v_k$ must occur
  after~$v_0$ on all short inner coset paths from~$w$ to~$v_k$ that avoid~$t$,
  hence
  $v_i\in[w_\ell]_{\alpha_\ell}\setminus[w_\ellm]_{\alpha_\ellm}$,
  for all $1\leq i\leq k$. This implies
  $\bigcup_{i=0}^k \beta_i \subseteq \alpha_\ell$ because
  $\beta_i=\sh_t(v_i,w)$, for all $0\leq i\leq k$.
  Furthermore, 
  \[
\textstyle
   \emptyset = \gamma_k = \gamma \setminus \bigcup_{i=0}^k \beta_i
   \quad\Rightarrow\quad
   \gamma \subseteq \bigcup_{i=0}^k \beta_i \subseteq \alpha_\ell.
  \]
But we also have $\gamma\nsubseteq\alpha_\ell$ 
  because we assumed that the coset path
\[
w = w_1,\alpha_1,\dots,\alpha_\ell,w_{\ell+1} = v_k
\]
avoids~$t=\rho(v,\gamma)=\rho(v_k,\gamma)$,
  contradicting the assumption $d_t(w,v_k)\leq\ell$.
  
  Thus, since each agent~$a_i$,
  $1\leq i\leq k$, is an element of~$\gamma$
  and each~$v_i$, $1\leq i\leq k$, was chosen such that
  $\ag(z,v_i)=\ag(z,v)$, for all $z\in\zbar$,
  $d_t(\ybar,v_i)>m$ and $\Mf,v\sim \Mf,v_i$,
  the world~$v_k =: v^\ast$ is as desired.
\end{proof}

\end{document}